\newtheorem{theorem}{Theorem}[section]
\newtheorem*{theorem*}{Theorem}
\newtheorem{lemma}[theorem]{Lemma}
\newtheorem{proposition}[theorem]{Proposition}
\newtheorem{corollary}[theorem]{Corollary}
\newtheorem*{conjecture*}{Conjecture}
\newtheorem{example}[theorem]{Example}
\newtheorem{remark}[theorem]{Remark}
\newcommand{\opname}[1]{\operatorname{\mathsf{#1}}}
\renewcommand{\mod}{\opname{mod}\nolimits}
\newcommand{\add}{\opname{add}\nolimits}
\newcommand{\der}{\cd}
\newcommand{\dimv}{\underline{\dim}\,}
\newcommand{\rank}{\opname{rank}\nolimits}
\newcommand{\rankv}{\underline{\rank}\,}
\newcommand{\coind}{\opname{coind}}
\newcommand{\rep}{\opname{rep}\nolimits}
\newcommand{\im}{\opname{im}\nolimits}
\newcommand{\Z}{\mathbb{Z}}
\newcommand{\N}{\mathbb{N}}
\newcommand{\C}{\mathbb{C}}
\renewcommand{\P}{\mathbb{P}}
\newcommand{\sgn}{\opname{sgn}}
\newcommand{\Hom}{\opname{Hom}}
\newcommand{\go}{\opname{G_0}}
\newcommand{\Ext}{\opname{Ext}}
\newcommand{\End}{\opname{End}}
\newcommand{\ca}{{\mathcal A}}
\newcommand{\cc}{{\mathcal C}}
\newcommand{\cd}{{\mathcal D}}
\newcommand{\cs}{{\mathcal S}}
\newcommand{\ct}{{\mathcal T}}
\newcommand{\cw}{{\mathcal W}}
\renewcommand{\hat}[1]{\widehat{#1}}
\begin{document}

\title[On cluster algebra of type $\mathrm{C}$]{Cluster algebras arising from cluster tubes II: the Caldero-Chapoton map}\thanks{Partially  supported by the National Natural Science Foundation of China (No. 11471224)}

\author{Changjian Fu}
\address{Changjian Fu\\Department of Mathematics\\SiChuan University\\610064 Chengdu\\P.R.China}
\email{changjianfu@scu.edu.cn}
\author{Shengfei Geng}
\address{Shengfei Geng\\Department of Mathematics\\SiChuan University\\610064 Chengdu\\P.R.China}
\email{genshengfei@scu.edu.cn}
\author{Pin Liu}
\address{Pin Liu\\Department of Mathematics\\
   Southwest Jiaotong University\\
  610031 Chengdu \\
  P.R.China}
  \email{pinliu@swjtu.edu.cn}
\subjclass[2010]{16G20, 13F60}
\keywords{cluster algebra, denominator vector,  cluster tube, Caldero-Chapoton map}
\maketitle

\begin{abstract}
We continue our investigation on cluster algebras arising from cluster tubes. Let $\cc$ be a cluster tube of rank $n+1$. For an arbitrary basic maximal rigid object $T$ of $\cc$, one may associate a skew-symmetrizable integer matrix $B_T$  and  hence a cluster algebra $\mathcal{A}(B_T)$ to $T$.
 We define an analogue Caldero-Chapoton map $\mathbb{X}_M^T$ for each indecomposable rigid object $M\in \cc$ and prove that $\mathbb{X}_?^T$ yields a bijection between the indecomposable rigid objects of $\cc$ and the cluster variables of the cluster algebra $\mathcal{A}(B_T)$.  The construction of the Caldero-Chapoton map involves Grassmannians of locally free submodules over the endomorphism algebra of $T$. We also show that there is a non-trivial $\mathbb{C}^{\times}$-action on the Grassmannians of locally free submodules, which is of independent interest.
\end{abstract}

\tableofcontents

\section{Introduction}~\label{s:introduction}
Cluster algebras were introduced by Fomin and Zelevinsky~\cite{FominZelevinsky02} with the aim to set up a combinatorial framework for the study of total positivity in algebraic groups and canonical bases in quantum groups.
Since their appearance, the combinatorial structure of cluster algebras has rapidly been found  in many other branches of mathematics such as Poisson geometry, Teichm\"{u}ller theory and representation theory of algebras~({\it cf.}~\cite{Keller10}). The discovery  in representation theory motivated  the development of cluster-tilting theory in cluster categories~\cite{BMRRT} or more generally in (2-Calabi-Yau) triangulated categories~\cite{IY,KR,KZ}.  Meanwhile, cluster-tilting theory provided us with  additive categorifications for cluster algebras of skew-symmetric type. In the categorification theory, the cluster-tilting objects and the so-called Caldero-Chapoton map~\cite{CalderoChapoton} or Palu's cluster character~\cite{Palu} played central roles, which enable us to realize cluster algebra via the geometric structures of objects in the corresponding $2$-Calabi-Yau category ({\it cf.} also~\cite{GLS08, Plamondon11}).  

Cluster tubes are $2$-Calabi-Yau triangulated categories, which admit no cluster-tilting objects but only maximal rigid objects. Nevertheless, Buan {\it et al.}~\cite{BuanMarshVatne} proved that the combinatorics of basic maximal rigid objects in cluster tubes provided categorifications for the combinatorics of cluster algebras of type $\mathrm{C}$. In~\cite{ZhouZhu14}, an analogue Caldero-Chapoton map has been established with respect to a special choice of acyclic initial seed, which implies that cluster tubes do provide additive categorifications for cluster algebras of type $\mathrm{C}$. However, we remark that most of the combinatorics and properties ({\it e.g.} the denominator vectors, $g$-vectors and the Caldero-Chapoton map) of a cluster algebra depend on the choice of the initial seed.

The present paper continues our investigation on cluster algebras arising from cluster tubes in a full generality initiated in~\cite{FGL1}. Here we establish the Caldero-Chapoton formula $\mathbb{X}^T_?$ with respect to an arbitrary initial seed ({\it cf.}~Section~\ref{s:index-coindex} and Theorem~\ref{t:main-thm-restate}), where $T$ is a basic maximal rigid object.
This generalizes the work of Zhou and Zhu~\cite{ZhouZhu14} in a full generality and completes the existence of Caldero-Chapoton maps for cluster algebras of type $\mathrm{C}$.
Our definition of Caldero-Chapoton map $\mathbb{X}^T_?$ involves varieties consisting of locally free submodules over the endomorphism algebra of $T$.
We remark that the consideration of variety consisting of locally free submodules also appears in the recent work of Geiss {\it et al.}~\cite{GLS17}. In~\cite{GLS17}, Geiss {\it et al.} obtained a Caldero-Chapoton formula for an arbitrary cluster algebra of finite type with acyclic initial seed via the category of locally free modules over certain Iwanaga-Gorenstein algebras introduced in~\cite{GLS14}.
Compare to~\cite{GLS17}, we only deal with cluster algebras of type $\mathrm{C}$ but with no restriction on initial seeds. Even for the case of cluster algebras of type $\mathrm{C}$ with acyclic initial seeds, our proof of Theorem~\ref{t:main-thm-restate} is also completely different from the one of~\cite{GLS17}. In fact, our proof goes back to ~\cite{CalderoChapoton,DG}. However, it is not possible to apply the proof of~\cite{CalderoChapoton,DG} directly, since in general the intersection of locally free submodules is no longer locally free.  In order to establish the main result, we show that there is a natural $\mathbb{C}^{\times}$-action on the Grassmannians of locally free submodules.
Another key step in our proof is to introduce an auxiliary Grassmannian which considers locally free submodules with an extra structure ~({\it cf.}~Section~\ref{s:grassmanian} for details). 
We remark that there are other generalization of the Caldero-Chapoton formula, {\it cf.} for example \cite{Demonet, R11,R15,GLS17} for skew-symmetrizable type and \cite{CHL17} for sign-skew-symmetric type.

The paper is organized as follows. In Section~\ref{s:preliminaries}, we recall the required background from cluster algebras and cluster tubes. 
After introducing the necessary definitions and notations, we state the main result (Theorem~\ref{t:main-thm}) in Section~\ref{s:main-thm}.  We restate the main result as Thoerem~\ref{t:main-thm-restate} in Section~\ref{s:index-coindex} by using coindex.
The rest of Section~\ref{s:index-coindex} is then devoted to study the properties of index and coindex.
 In Section~\ref{s:grassmanian}, we show that there is a natural $\mathbb{C}^{\times}$-action on Grassmannians of locally free submodules (Lemma~\ref{l:c-action}). We introduce an auxiliary Grassmannian with respect to an Auslander-Reiten sequence and study the basic properties for Grassmannians of locally free submodules following~\cite{CalderoChapoton,DG}.  The results obtained in Section~\ref{s:index-coindex} and~\ref{s:grassmanian} are applied to prove our main result (Theorem~\ref{t:main-thm}) in Section~\ref{s:the-proof}. 

\noindent{\bf Convention.}
Let $\mathbb{C}$ be the field of complex number. We denote by $\mathbb{C}^{\times}$ the set of nonzero elements in $\mathbb{C}$. Fix a positive integer $n$, we denote by $\underline{e_1},\cdots, \underline{e_n}$ the standard basis of $\Z^n$. For a matrix $B$, we denote by $B^{\opname{tr}}$ the transpose of $B$.
For a vector $\beta\in \Z^n$ and $1\leq i\leq n$, let $[\beta:i]$ be the $i$-th component of $\beta$. For an object $M$ in a category $\cc$, denote by $M^{\oplus n}$ the direct sum of $n$ copies of $M$ and $|M|$ the number of non-isomorphic indecomposable direct summands of $M$. Denote by $\add M$ the subcategory of $\cc$ consisting of objects which are finite direct sum of direct summands of $M$. 
For an arrow $\alpha:i\to j$, we denote by $s(\alpha)=i$ the source of $\alpha$ and $t(\alpha)=j$ the target of $\alpha$.

\section{Cluster algebras and cluster tubes}~\label{s:preliminaries}
Throughout this article, we fix a positive integer $n$.
\subsection{Cluster algebras}
For an integer $a$, we set $[a]_+=\max\{a, 0\}$ and $\sgn(a)=\begin{cases}\frac{a}{|a|}& a\neq 0\\ 0& a=0\end{cases}.$  Let $\mathcal{F}$ be the field of rational functions in $n$ indeterminates with coefficients in $\mathbb{Q}$.
A {\it seed} of $\mathcal{F}$ is a pair $(B, \mathbf{x})$ consisting of a skew-symmetrizable matrix $B=(b_{ij})\in M_n(\Z)$ and a free generating set $\mathbf{x}=\{x_1,\cdots, x_n\}$ of the field $\mathcal{F}$. 

 For any $1\leq k\leq n$, the {\it seed mutation of $(B,\mathbf{x})$ in the direction $k$} transforms $(B,\mathbf{x})$ into a new seed $\mu_k(B, \mathbf{x})=(B',\mathbf{x}')$, where
 \begin{itemize}
 \item[$\bullet$] the entries $b_{ij}'$ of $B'$ are given by
\[b_{ij}'=\begin{cases}-b_{ij} &\text{if }i=k ~\text{or}~j=k,\\
b_{ij}+\opname{sgn}(b_{ik})[b_{ik}b_{kj}]_{+}& \text{else}.\end{cases}
\]
\item[$\bullet$] the cluster $\mathbf{x}'=\{x_1',\cdots, x_n'\}$ is given by $x_j'=x_j$ for $j\neq k$ and $x_k'\in \mathcal{F}$ is determined by the {\it exchange relation}
\[x_k'x_k=\prod_{i=1}^nx_i^{[b_{ik}]_+}+\prod_{i=1}^nx_i^{[-b_{ik}]_+}.
\]
\end{itemize}

 Let $\mathbb{T}_n$ be the $n$-regular tree whose edges are labeled by the numbers $1,2,\cdots, n$, so that the $n$ edges emanating from each vertex receive different labels.
Fix a seed $(B,\mathbf{x})$ as initial seed, a {\it cluster pattern} is the assignment of a seed $(B_t,\mathbf{x}_t)$ to each vertex $t$ of $\mathbb{T}_n$ such that 
\begin{itemize}
\item the seeds assigned to vertices $t$ and $t'$ linked by an edge labeled $k$ are obtained from each other by  the seed mutation $\mu_k$;
\item there exists a vertex $t_0\in \mathbb{T}_n$ such that $(B_{t_0},\mathbf{x}_{t_0})=(B,\mathrm{x})$.
\end{itemize}
A cluster pattern is uniquely determined by an  assignment of the initial seed $(B, \mathbf{x})$ to any vertex $t_0\in \mathbb{T}_n$.
The matrix $B_t$ is the {\it exchange matrix} and $\mathbf{x}_t$ is the {\it cluster} of the seed $(B_t,\mathbf{x}_t)$. Elements of the cluster $\mathbf{x}_t$ are {\it cluster variables} of $(B_t,\mathbf{x}_t)$. Cluster variables in $\mathbf{x}_{t_0}$ are {\it initial cluster variables}. 
The {\it cluster algebra} $\mathcal{A}(B):=\mathcal{A}(B,\mathbf{x})$ is the subalgebra of $\mathcal{F}$ generated by all of the cluster variables.
The cluster algebra $\mathcal{A}(B)$ is of {\it skew-symmetric type} provided that $B$ is skew-symmetric, otherwise  $\mathcal{A}(B)$ is of {\it skew-symmetrizable type}.

A cluster algebra is of {\it finite type} if it has only finitely many cluster variables. Fomin and Zelevinsky~\cite{FominZelevinsky03} proved that the cluster algebras of finite type are parametrized by the finite root systems. Let $B=(b_{ij})\in M_n(\Z)$ be a skew-symmetrizable matrix. The Cartan couterpart $A(B)=(c_{ij})$ of $B$ is an $n\times n$ integer matrix such that $c_{ii}=2$ for $i=1,\cdots, n$ and $c_{ij}=-|b_{ij}|$ for $i\neq j$. In particular, $A(B)$ is a generalized Cartan matrix. According to~\cite{FominZelevinsky03}, the cluster algebra $\mathcal{A}(B)$ is {\it of type $\mathrm{C}_n$} if there is a vertex $t\in \mathbb{T}_n$ such that the Cartan counterpart $A(B_t)$ is a generalized Cartan matrix of type $\mathrm{C}_n$.

\subsection{Cluster tubes}

 Let $\Delta_{n+1}$ be the cyclic quiver with $n+1$ vertices. We label the vertex set by $\{1,2,\cdots, n+1\}$ such that the arrows are precisely from vertex $i$ to $i+1$ (taken modulo $n+1$). Denote by $\ct:=\ct_{n+1}$ the category of finite-dimensional nilpotent $\C$-representations over the opposite quiver $\Delta_{n+1}^{\operatorname{op}}$.  The category $\ct$ is called a {\it tube of rank $n+1$}, which is a hereditary abelian category. Moreover, $\ct$ is standard, {\it i.e.} the subcategory of $\ct$ consisting of the indecomposable objects is equivalent to the mesh category of the Auslander-Reiten (AR for short) quiver of $\ct$.
 Each indecomposable object of $\ct$ is uniquely determined by its socle and its length. For $1\leq a\leq n+1$ and $b\in \N$, we will denote by $(a,b)$ the unique indecomposable object with socle the simple at vertex $a$ and of length $b$. 
 
 Let $\der^b(\ct)$ be the bounded derived category of $\ct$ with suspension functor $\Sigma$. Let $\tau$ be the AR translation of $\der^b(\ct)$, where $\tau(a, b)=(a-1, b)$. The {\it cluster tube of rank $n+1$} is the orbit category $\cc:=\cc_{n+1}=\der^b(\ct)/ \tau^{-1}\circ \Sigma$. The category $\cc$ admits a canonical triangle structure such that the canonical projection $\pi:\der^b(\ct)\to \cc$ is a triangle functor~\cite{Keller05}({\it cf.}~also \cite{Barot-Kussin-Lenzing}). Moreover, it is a Calabi-Yau triangulated category with Calabi-Yau dimension of $2$ ({\it cf.}~\cite{Keller05}).
 The AR-translation $\tau$ and the suspension functor $\Sigma$ of $\der^b(\ct) $ induce the AR-translation and the suspension functor of $\cc$. By abuse of notations, we still denote by $\tau$ and $\Sigma$ the AR-translation and the suspension functor of $\cc$ respectively. We remark that there is an isomorphism of functors $\tau\xrightarrow{\sim}\Sigma$ in the category $\cc$.
The composition of the embedding of $\ct$ into $\der^b(\ct)$ with the canonical projection $\pi$ yields a bijection between the indecomposable objects of $\ct$ and the indecomposable objects of $\cc$. We always identify the objects of  $\ct$ with the ones of $\cc$ by the bijection. In particular, we may say the length of an indecomposable object of $\cc$. For an indecomposable object $X\in \cc$, we will denote by $l(X)$ the length of $X$.

Let $X, Y$ be indecomposable objects of $\ct$, by definition of $\cc$ and the fact that $\ct$ is hereditary, we have
\[\Hom_{\cc}(X, Y)=\Hom_{\der^b(\ct)}(X, Y)\oplus \Hom_{\der^b(\ct)}(X, \tau^{-1}\circ \Sigma Y).
\]
According to the Auslander-Reiten duality, we obtain ({\it cf.} Lemma $2.1$ of ~\cite{BuanMarshVatne})
\begin{lemma}~\label{l:compute-morphism-cluster-tube}
Let $X, Y$ be indecomposable objects of $\ct$,  we have
\[\Hom_{\cc}(X, Y)\cong \Hom_{\ct}(X, Y)\oplus D\Hom_{\ct}(Y, \tau^2 X).
\]
\end{lemma}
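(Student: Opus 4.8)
The plan is to evaluate the two summands of the decomposition
\[\Hom_{\cc}(X, Y)=\Hom_{\der^b(\ct)}(X, Y)\oplus \Hom_{\der^b(\ct)}(X, \tau^{-1}\Sigma Y)\]
that is already recorded above, treating them separately. For the first summand, since $X$ and $Y$ both lie in the heart $\ct$ of $\der^b(\ct)$ (placed in degree $0$), there are no negative $\Ext$-groups to contribute, and a morphism between complexes concentrated in degree $0$ is simply a morphism in the abelian category; thus $\Hom_{\der^b(\ct)}(X,Y)=\Hom_{\ct}(X,Y)$, which is exactly the first term of the claimed formula.

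For the second summand I would first rewrite it inside the abelian category. Since $\Sigma$ is the suspension functor and $\ct$ is hereditary, and since $\tau$ commutes with $\Sigma$ and is an autoequivalence preserving the heart, we have $\Hom_{\der^b(\ct)}(X, \tau^{-1}\Sigma Y)=\Hom_{\der^b(\ct)}(X, \Sigma(\tau^{-1}Y))=\Ext^1_{\ct}(X, \tau^{-1}Y)$. The key input is then the Auslander--Reiten (Serre) duality for the hereditary category $\ct$, which provides functorial isomorphisms $\Ext^1_{\ct}(A, B)\cong D\Hom_{\ct}(B, \tau A)$. Applying this with $A=X$ and $B=\tau^{-1}Y$ yields $\Ext^1_{\ct}(X, \tau^{-1}Y)\cong D\Hom_{\ct}(\tau^{-1}Y, \tau X)$.

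Finally I would clean up the $\tau$-shifts: since $\tau$ is an autoequivalence of $\ct$, applying it to both arguments gives $\Hom_{\ct}(\tau^{-1}Y, \tau X)\cong \Hom_{\ct}(Y, \tau^2 X)$, so the second summand becomes $D\Hom_{\ct}(Y, \tau^2 X)$. Combining the two computations produces exactly $\Hom_{\cc}(X, Y)\cong \Hom_{\ct}(X, Y)\oplus D\Hom_{\ct}(Y, \tau^2 X)$.

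There is no serious obstacle here; the content is entirely the availability of Serre duality for the tube together with careful bookkeeping of the commuting autoequivalences $\tau$ and $\Sigma$. The only point deserving attention is to state the duality in the correct form and direction---equivalently, to identify the Serre functor of $\der^b(\ct)$ as $\tau\Sigma$---and to confirm that all objects appearing after the shifts, namely $\tau^{-1}Y$, $\tau X$ and $\tau^2 X$, remain in the heart $\ct$, which holds because $\tau(a,b)=(a-1,b)$ sends indecomposables of $\ct$ to indecomposables of $\ct$.
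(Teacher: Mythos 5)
Your proof is correct and follows essentially the same route as the paper, which deduces the lemma from the orbit-category decomposition of $\Hom_{\cc}(X,Y)$ together with Auslander--Reiten duality (citing Lemma 2.1 of Buan--Marsh--Vatne); your write-up merely carries out that computation explicitly. Each step is valid: the identifications $\Hom_{\der^b(\ct)}(X,\tau^{-1}\Sigma Y)\cong \Ext^1_{\ct}(X,\tau^{-1}Y)\cong D\Hom_{\ct}(\tau^{-1}Y,\tau X)\cong D\Hom_{\ct}(Y,\tau^2 X)$ hold because $\ct$ is hereditary, $\tau$ is an autoequivalence of $\ct$ commuting with $\Sigma$, and the tube has no nonzero projectives, so the AR formula applies with ordinary (not stable) $\Hom$-spaces.
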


Following~\cite{BuanMarshVatne}, morphisms in $\Hom_{\der^b(\ct)}(X, Y)$ are called {\it $\ct$-morphisms} from $X$ to $Y$ and morphisms in $\Hom_{\der^b(\ct)}(X, \tau^{-1}\circ \Sigma Y)$ are called {\it $\der$-morphisms} from $X$ to $Y$. Each morphism from $X$ to $Y$ in $\cc$ can be written as the sum of a $\ct$-morphism with a $\der$-morphism. It is also well-known that the composition of two $\ct$-morphisms is also a $\ct$-morphism, the composition of a $\ct$-morphism with a $\der$-morphism is a $\der$-morphism, and the composition of two $\der$-morphisms is zero, and no $\ct$-morphism can factor through a $\der$-morphism.

For each indecomposable object $X=(a, b)\in \ct$ with $b\leq n$, the {\it wing $\cw_{X}$ determined by $X$} is the set of indecomposables whose position in the AR-quiver is in the triangle with $X$ on top.
We also denote by $X^{\sqsubset}$ the support of $\Hom_{\ct}(X,-)$ in $\ct$. Namely, $X^{\sqsubset}$ consists of indecomposable objects $Y$ of $\ct$ such that $\Hom_{\der^b(\ct)}(X,Y)\neq 0$. Dually, we may define $~^{\sqsupset}X$ to be the support of $\Hom_{\ct}(-,X)$ in $\ct$. By Lemma~\ref{l:compute-morphism-cluster-tube}, we clearly know  that an indecomposable object $Y\in \cc$ satisfies that $\Hom_{\cc}(X,Y)\neq 0$ if and only if $Y\in X^{\sqsubset}\cup~^{\sqsupset}\tau^2X$.

The following result is a consequence of Lemma~\ref{l:compute-morphism-cluster-tube} and the fact that $\ct$ is standard.
\begin{lemma}~\label{l:morphism-non-vanish}
Let $X$ and $Y$ be indecomposable objects of $\cc$ with $l(X)=n$. If $Y\in \cw_{\tau X}$, then $\Hom_\cc(X, Y)=0$; if $Y\in X^\sqsubset\cap^\sqsupset\tau^2X$, then $\dim_\C\Hom_\cc(X, Y)=2$.
\end{lemma}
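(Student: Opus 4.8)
The plan is to prove both claims using Lemma~\ref{l:compute-morphism-cluster-tube}, which reduces every computation in $\cc$ to two computations in the hereditary category $\ct$, together with the explicit combinatorics of the AR-quiver of the tube. Since $\ct$ is standard, the dimension of $\Hom_\ct(X,Y)$ between indecomposables is determined purely by the mesh category, hence by the relative positions of $X$ and $Y$ in the AR-quiver. Throughout I write $X=(a,n)$, so $X$ sits on top of a wing of maximal size (recall the tube has rank $n+1$, so $l(X)=n$ is one less than the rank), and I use $\tau(a,b)=(a-1,b)$ together with the isomorphism $\tau\xrightarrow{\sim}\Sigma$.

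First I would prove the vanishing statement. Suppose $Y\in\cw_{\tau X}$. By Lemma~\ref{l:compute-morphism-cluster-tube},
\[
\Hom_\cc(X,Y)\cong \Hom_\ct(X,Y)\oplus D\Hom_\ct(Y,\tau^2 X).
\]
I would argue each summand vanishes. For the first summand: $X^\sqsubset$, the support of $\Hom_\ct(X,-)$, consists of the indecomposables ``to the upper right'' of $X$ in the AR-quiver, and I would check using the standardness of $\ct$ that the wing $\cw_{\tau X}$ is disjoint from $X^\sqsubset$ — intuitively $\tau X=(a-1,n)$ lies strictly to the left of $X$, so its entire wing is out of range of maps emanating from $X$. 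For the second summand: $\tau^2 X=(a-2,n)$, and $Y\in\cw_{\tau X}$ means $Y$ lies in the wing whose apex is $\tau X$; I would verify that $\tau^2 X\notin {}^\sqsupset\!Y$ equivalently $Y\notin (\tau^2 X)^\sqsubset$, using that $\tau^2 X$ is further left still. The cleanest formulation is to invoke the criterion stated just before the lemma: an indecomposable $Y$ has $\Hom_\cc(X,Y)\neq 0$ iff $Y\in X^\sqsubset\cup{}^\sqsupset\tau^2 X$, so it suffices to show $\cw_{\tau X}\cap\big(X^\sqsubset\cup{}^\sqsupset\tau^2 X\big)=\varnothing$, which is a direct position check in the AR-quiver.

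For the second claim, take $Y\in X^\sqsubset\cap{}^\sqsupset\tau^2 X$. Then by definition $\Hom_\ct(X,Y)\neq 0$ and $\Hom_\ct(Y,\tau^2 X)\neq 0$, so each of the two summands in the decomposition above is nonzero; thus $\dim_\C\Hom_\cc(X,Y)\geq 2$. The substance is the reverse inequality: I would show that because $X$ has the \emph{maximal} length $l(X)=n$ admissible for a wing-apex (one short of the rank $n+1$), the Hom-spaces $\Hom_\ct(X,Y)$ and $\Hom_\ft{\ct}(Y,\tau^2X)$ are each at most one-dimensional whenever they are nonzero. In the mesh category of a tube, multiplicities in $\Hom_\ct((a,b),(c,d))$ can exceed one only when the length parameters are large enough to ``wrap'' — and the constraint $l(X)=n$ together with $Y$ lying in the overlap $X^\sqsubset\cap{}^\sqsupset\tau^2 X$ prevents this. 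I expect this dimension-one bound to be the main obstacle: it requires the careful bookkeeping of which composites of irreducible maps survive the mesh relations, and it is exactly here that the hypothesis $l(X)=n$ (rather than an arbitrary length) is used. Once both summands are shown to be exactly one-dimensional, we conclude $\dim_\C\Hom_\cc(X,Y)=2$.

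Operationally, I would set up a small lemma computing $\dim_\C\Hom_\ct\big((a,b),(c,d)\big)$ in terms of the coordinates — this is standard for nilpotent representations of the cyclic quiver and amounts to counting common entries of the supports — and then feed the coordinates $X=(a,n)$, $\tau^2X=(a-2,n)$, and a general $Y=(c,d)$ in the prescribed region into it. The vanishing claim and the dimension-$2$ claim then both drop out as inequalities among $a,c,d$ and $n$. The only genuinely delicate point is confirming the upper bound of $1$ on each tube-Hom, for which the hypothesis $l(X)=n$ is indispensable.
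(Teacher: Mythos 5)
Your proposal is correct and takes essentially the same route as the paper, whose entire proof is the one-line observation that the lemma follows from Lemma~\ref{l:compute-morphism-cluster-tube} together with the standardness of $\ct$ --- precisely the decomposition $\Hom_\cc(X,Y)\cong \Hom_\ct(X,Y)\oplus D\Hom_\ct(Y,\tau^2X)$ plus mesh-category position/dimension counts that you flesh out. Your key quantitative point (each tube-Hom is at most one-dimensional because $l(X)=n$ is strictly less than the rank $n+1$, so no ``wrapping'' occurs) is exactly the substance hiding behind the paper's citation of standardness.
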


\subsection{Algebras arising from cluster tubes}~\label{ss: endomorphism-quiver}
We first recall the definition of basic maximal rigid objects in a $2$-Calabi-Yau triangulated category.
Let $\der$ be a $\Hom$-finite $2$-Calabi-Yau triangulated category with suspension functor $\Sigma$. An object $T\in \der$ is {\it rigid} provided $\Ext^1_{\der}(T,  T)=0$ and it is {\it maximal rigid}  if it is rigid and $\Ext^1_{\der}(X\oplus T, X\oplus
T)=0$ implies that $X\in \add T$. A rigid object $T\in \der$ is called a {\it cluster-tilting object} if $\Ext^1_{\der}(T, Y)=0$ implies that $Y\in \add T$.

 It was proved in ~\cite{BuanMarshVatne} that the cluster tube $\cc$ has no cluster-tilting objects but only maximal rigid objects. Moreover, the following descriptions of maximal rigid objects were given.
\begin{lemma}~\label{l:basic-property-cluster-tube}
\begin{itemize}
\item[(1)] An indecomposable object $X$ in $\cc$ is rigid if and only if it has length $l(X)\leq n$;
\item[(2)] Let $T$ be an arbitrary basic maximal rigid object of $\cc$. Then $|T|=n$ and $T$ admits a unique indecomposable direct summand, say $T_1$, with length $n$. Moreover, each indecomposable direct summand of $T$ lies in  the wing $\cw_{T_1}$.
\end{itemize}
\end{lemma}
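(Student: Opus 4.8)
\emph{Part (1).} The plan is to reduce the vanishing of self-extensions in $\cc$ to a $\Hom$-computation inside the tube $\ct$. For an indecomposable $X=(a,b)$, the $2$-Calabi--Yau property together with the isomorphism $\Sigma\cong\tau$ and Lemma~\ref{l:compute-morphism-cluster-tube} gives
\[ \Ext^1_{\cc}(X,X)\cong \Hom_{\cc}(X,\tau X)\cong \Hom_{\ct}(X,\tau X)\oplus D\,\Hom_{\ct}(\tau X,\tau^2 X). \]
Since $\tau$ is an autoequivalence of $\ct$, the last summand is isomorphic to $\Hom_{\ct}(X,\tau X)$, so $X$ is rigid if and only if $\Hom_{\ct}(X,\tau X)=0$. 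It then remains to compute this space for $X=(a,b)$ and $\tau X=(a-1,b)$ using the uniserial structure of the indecomposables of $\ct$: a nonzero morphism corresponds to a module that is simultaneously a quotient of $(a,b)$ and a submodule of $(a-1,b)$, and matching socles shows such a module exists exactly when $b\geq n+1$. Hence $\Hom_{\ct}(X,\tau X)=0$ iff $b\le n$, which is the assertion. (The boundary case $l(X)=n$ can alternatively be read off from Lemma~\ref{l:morphism-non-vanish}, since $\tau X\in\cw_{\tau X}$ forces $\Hom_{\cc}(X,\tau X)=0$.)

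\emph{Part (2).} By part (1) every indecomposable summand of $T$ has length $\le n$. The engine for the rest is a compatibility criterion: for rigid indecomposables $X=(a,b)$ and $Y=(c,d)$ one has $\Ext^1_{\cc}(X,Y)=0$ iff both $\Hom_{\ct}(X,\tau Y)=0$ and $\Hom_{\ct}(Y,\tau X)=0$, obtained exactly as in part (1). Encoding each indecomposable by its arc of composition factors (a proper arc of the cyclic vertex set $\Z/(n+1)$), the same socle-matching computation translates this into a purely combinatorial condition: $X\oplus Y$ is rigid iff the two arcs are nested or separated by a gap (non-adjacent). From this criterion I would extract two immediate consequences. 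First, two distinct length-$n$ indecomposables $(a,n)$ and $(a',n)$ are never compatible, so $T$ has at most one summand of length $n$. Second, if $X=(a,n)$ and $Y$ is a rigid indecomposable with $Y\notin\cw_X$, then $X\oplus Y$ is not rigid; conversely, using Lemma~\ref{l:morphism-non-vanish} and the equivalence $Y\in\cw_X\iff\tau Y\in\cw_{\tau X}$, every $Y\in\cw_X$ is compatible with $X$. In particular, once a length-$n$ summand $T_1$ is known to exist, every other summand must lie in $\cw_{T_1}$.

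It remains to produce the length-$n$ summand and to count. For existence, suppose no summand has length $n$; then every summand-arc is a proper arc of length $\le n-1$, and pairwise compatibility forces the arcs that are maximal under nesting to be pairwise gap-separated. Consequently their union omits at least one vertex $v\in\Z/(n+1)$, and the length-$n$ object $X=(c,n)$ with support $\Z/(n+1)\setminus\{v\}$ then contains every summand in its wing, hence is compatible with all of $T$ while not lying in $\add T$ --- contradicting maximality. Thus a length-$n$ summand $T_1$ exists, is unique, and all summands lie in $\cw_{T_1}$. Finally, within the wing the compatible families are governed by the non-crossing condition above, and a maximal such family (necessarily containing the apex $T_1$) has exactly $n$ elements, which yields $|T|=n$. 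I expect the main obstacle to be establishing the precise compatibility criterion and the cardinality count inside a wing: the arc translation must be made exact (distinguishing crossing, adjacent, nested and gap-separated arcs), and the count of maximal non-crossing families requires an induction on the wing, equivalently the identification with the triangulations underlying the type $\mathrm{C}$ combinatorics of~\cite{BuanMarshVatne}.
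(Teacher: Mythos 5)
Your proposal cannot be compared against an in-paper argument, because the paper does not prove this lemma at all: it is imported verbatim from Buan--Marsh--Vatne~\cite{BuanMarshVatne} (``the following descriptions of maximal rigid objects were given''). Measured against the cited source, your reconstruction follows essentially the same route as the original: reduce $\Ext^1_{\cc}$ to $\Hom$-spaces in the tube via the $2$-Calabi--Yau property, $\Sigma\cong\tau$, and the orbit-category formula of Lemma~\ref{l:compute-morphism-cluster-tube}, then translate rigidity and compatibility into arc combinatorics on $\Z/(n+1)$ and count inside a wing. Your part (1) is correct and complete: $\Hom_{\ct}((a,b),(a-1,b))\neq 0$ forces a common quotient/submodule $(a+k,b-k)=(a-1,j)$, whose smallest solution is $k=n$, giving nonvanishing exactly when $b\geq n+1$. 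In part (2) your two technical debts are real but routine, and you flag them honestly. First, the compatibility criterion must be stated with \emph{cyclic} adjacency excluded on \emph{both} sides: for instance the arcs $\{1,2\}$ and $\{3\}$ in $\Z/5$ have a gap on one side yet $\Hom_{\ct}((1,2),\tau(3,1))=\Hom_{\ct}((1,2),S_2)\neq 0$, so ``separated by a gap'' must mean a nonempty gap in both cyclic directions; with that reading your three consequences (at most one length-$n$ summand, wing containment, and the construction of a length-$n$ complement when none is present) all check out, the last one using that a single maximal arc of length $\leq n-1$ already omits a vertex. Second, the cardinality count --- that a maximal family of subintervals of a length-$n$ interval, pairwise nested or gap-separated, has exactly $n$ members --- does need the induction you mention (remove the apex and split at the gaps), and it is exactly the triangulation-type count underlying \cite{BuanMarshVatne}. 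So: a correct outline, same strategy as the cited original, with the two flagged steps being genuine but standard verifications rather than gaps in the idea.
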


Let $T=\bigoplus_{i=1}^nT_i$ be a  basic maximal rigid object of $\cc$ with indecomposable direct summands $T_1,\cdots, T_n$. 
Denote by $A:=\End_{\cc}(T)$ the endomorphism algebra of $T$ and $\mod A$ the category of finitely generated right $A$-modules. An object $X\in \cc$ is {\it finitely presented} by $T$ if there is a triangle
\[T_1^X\to T_0^X\xrightarrow{f} X\to \Sigma T_1^X
\]
in $\cc$, where $T_1^X, T_0^X\in \add T$.
Let $\opname{pr}T$ be the  full subcategory of $\cc$ consisting of objects which are finitely presented by $T$.
A general result of ~\cite{ZhouZhu} implies that rigid objects of $\cc$ belong to $\opname{pr}T$.
 The functor \[F:=\Hom_{\cc}(T,-):\cc\to \mod A\]
induces an equivalence of categories \[F: \opname{pr}T/\add \Sigma T\xrightarrow{\sim} \mod A,\] where $\opname{pr}T/\add \Sigma T$ is the additive quotient of $\opname{pr} T$ by morphisms factorizing through $\add \Sigma T$. Moreover, the restriction of the functor $F$ to the subcategory $\add T$ ({\it resp.} $\add \Sigma^2T$) yields an equivalence between $\add T$ ({\it resp.} $\add \Sigma^2T$) and the category of finitely generated projective ({\it resp.} injective) $A$-modules. 
Let $\tau_A$ be the AR translation of $\mod A$. Recall that an $A$-module $M$ is {\it $\tau_A$-rigid} if $\Hom_A(M,\tau_A M)=0$ ({\it cf.}~\cite{AIR}). The following bijection has been proved in~\cite{ChangZhangZhu, LiuXie}.
\begin{proposition}~\label{p:tau-rigid}
The functor $F$ induces a bijection between the indecomposable rigid objects in $\cc\backslash \add\Sigma T$ and the indecomposable $\tau_A$-rigid $A$-modules.
\end{proposition}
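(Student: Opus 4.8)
The plan is to reduce the statement to a single homological comparison. The equivalence $F\colon\opname{pr}T/\add\Sigma T\xrightarrow{\sim}\mod A$ restricts to a bijection between the isomorphism classes of indecomposable objects of $\cc$ lying in $\opname{pr}T\backslash\add\Sigma T$ and the isomorphism classes of indecomposable $A$-modules---an equivalence of Krull--Schmidt categories induces such a bijection, and an indecomposable object of $\opname{pr}T$ survives in the quotient, with local endomorphism ring, precisely when it does not belong to $\add\Sigma T$. Since every rigid object of $\cc$ lies in $\opname{pr}T$ by~\cite{ZhouZhu}, the indecomposable rigid objects of $\cc\backslash\add\Sigma T$ form a subclass of this index set, and it suffices to prove that an indecomposable $X\in\opname{pr}T\backslash\add\Sigma T$ is rigid in $\cc$ if and only if the $A$-module $FX$ is $\tau_A$-rigid.

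The equivalence of these two conditions rests on a functorial isomorphism bridging the $2$-Calabi--Yau $\Ext^1$ in $\cc$ and $\Hom_A(-,\tau_A-)$ in $\mod A$. For $X\in\opname{pr}T$ with presenting triangle $T_1^X\to T_0^X\to X\to\Sigma T_1^X$, applying $F$ and using $F\Sigma T_1^X=\Ext^1_\cc(T,T_1^X)=0$ (as $T$ is rigid) produces a projective presentation $FT_1^X\to FT_0^X\to FX\to 0$ of $FX$. Since $F$ identifies $\add T$ with $\proj A$ and $\add\Sigma^2 T$ with $\opname{inj}A$, the Nakayama functor $\nu_A$ of $A$ satisfies $\nu_A F\cong F\Sigma^2$ on $\add T$, whence
\[
\tau_A FX\cong\ker\bigl(\nu_A FT_1^X\to\nu_A FT_0^X\bigr)\cong\ker\bigl(F\Sigma^2 T_1^X\to F\Sigma^2 T_0^X\bigr).
\]
Applying $\Hom_A(FY,-)$ to this defining sequence, transporting the computation back to $\cc$ through the equivalence, and using the $2$-Calabi--Yau duality $\Ext^1_\cc(X,Y)\cong D\Ext^1_\cc(Y,X)$, I would obtain, for $X,Y\in\opname{pr}T$, a functorial isomorphism
\[
\Hom_A(FY,\tau_A FX)\cong D\Ext^1_\cc(X,Y).
\]

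Specializing this isomorphism to $Y=X$ gives $\Hom_A(FX,\tau_A FX)\cong D\Ext^1_\cc(X,X)$, so $X$ is rigid in $\cc$ if and only if $FX$ is $\tau_A$-rigid. Combined with the object-level bijection of the first paragraph, this shows that $F$ carries the indecomposable rigid objects of $\cc\backslash\add\Sigma T$ bijectively onto the indecomposable $\tau_A$-rigid $A$-modules, as claimed. I expect the genuine difficulty to be the derivation of the bridge isomorphism: one must account for the morphisms that factor through $\add\Sigma T$---equivalently, through injective $A$-modules---when passing between $\cc$ and $\mod A$, and verify that they do not disturb the clean identification above. This demands a careful combination of the projective presentation, the compatibility $\nu_A F\cong F\Sigma^2$, the $2$-Calabi--Yau duality, and the Auslander--Reiten formula in $\mod A$; the remaining steps are formal.
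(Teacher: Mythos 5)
The paper does not actually prove this proposition---it is quoted from the cited references \cite{ChangZhangZhu, LiuXie}---so the comparison is with those proofs, whose overall shape (reduce to: $X$ rigid $\Leftrightarrow$ $FX$ is $\tau_A$-rigid, for indecomposable $X\in\opname{pr}T\backslash\add\Sigma T$) your first and last paragraphs correctly reproduce. The problem is that your pivotal ``bridge isomorphism'' $\Hom_A(FY,\tau_A FX)\cong D\Ext^1_\cc(X,Y)$ is false, and it fails inside the very setting of this paper. Take $n=1$, so $\cc$ is the cluster tube of rank $2$, $T=T_1=(1,1)$ and $A=\End_\cc(T)\cong\C[x]/(x^2)$. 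The object $X=Y=(1,2)$ lies in $\opname{pr}T$, and $FX$ is the simple $A$-module $S$ with $\tau_A S\cong S$, so $\Hom_A(FX,\tau_A FX)$ is one-dimensional; but $\Ext^1_\cc(X,X)=\Hom_\cc((1,2),(2,2))\cong\Hom_\ct((1,2),(2,2))\oplus D\Hom_\ct((2,2),(1,2))$ is two-dimensional. (The same failure occurs for ordinary cluster categories, e.g.\ for a regular simple over the Kronecker quiver, or for $X=P_2$, $Y=S_1$ in type $\mathrm{A}_2$, where your formula would give $0=1$; so the failure is not an artifact of $T$ being merely maximal rigid.) The correct statement, which is what \cite{AIR} proves for cluster-tilting objects and what \cite{ChangZhangZhu, LiuXie} adapt to maximal rigid objects, is that $\Hom_A(FY,\tau_A FX)$ is only a \emph{quotient} of $\Ext^1_\cc(Y,X)$, namely by the subspace $[\add\Sigma T](Y,\Sigma X)$ of morphisms factoring through $\add\Sigma T$; that subspace is dual (via the $2$-Calabi--Yau property) to $\Hom_A(FX,\tau_A FY)$, giving
\[
\dim_\C\Ext^1_\cc(X,Y)=\dim_\C\Hom_A(FY,\tau_A FX)+\dim_\C\Hom_A(FX,\tau_A FY),
\]
consistent with the example above ($2=1+1$).

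So the morphisms factoring through $\add\Sigma T$, which you flag at the end as something to be checked ``not to disturb the clean identification,'' genuinely do disturb it: they contribute a nonzero complementary summand, not nothing. Your argument is repairable, because the weaker consequence you actually need survives: specializing the dimension identity to $X=Y$ gives $\dim_\C\Ext^1_\cc(X,X)=2\dim_\C\Hom_A(FX,\tau_A FX)$, hence $X$ is rigid if and only if $FX$ is $\tau_A$-rigid, and your object-level bijection then finishes the proof. But establishing that identity is the real content of the cited results, and it requires exactly the bookkeeping you deferred, with the extra complication that here $F$ is an equivalence only on $\opname{pr}T/\add\Sigma T$ rather than on all of $\cc/\add\Sigma T$ (since $T$ is not cluster-tilting), so the relevant exact sequences must be run entirely inside $\opname{pr}T$. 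As written, the proof proves its conclusion from a lemma that is false.
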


We now associate a quiver $Q_T$ to $T$, whose vertices correspond to the indecomposable direct summands of $T$ and the arrows from the indecomposable direct summand $T_i$ to $T_j$ is given by the dimension of the space of irreducible maps $\opname{rad}(T_i,T_j)/\opname{rad}^2(T_i,T_j)$, where $\opname{rad}(-,-)$ is the radical of the category $\add T$. For simplicity, we will also denote the vertex set $Q_0$ of $Q_T$ by $Q_0=\{1,2,\cdots, n\}$, where the vertex $i$ corresponds to $T_i$ for $i=1,\cdots, n$. It is clear that the quiver $Q_T$ coincides with the Gabriel quiver of $A$.

Let $\mathcal{Q}_{n}$ be the set of quivers with $n$ vertices satisfying the following conditions:
\begin{itemize}
\item[(a)] All non-trivial minimal cycles of length at least $2$ in the underlying graph is oriented and of length $3$;
\item[(b)] Any vertex has at most four neighbors;
\item[(b)] If a vertex has four neighbors, then two of its adjacent arrows belong to one $3$-cycle, and the other two belong to another $3$-cycle;
    \item[(d)]If a vertex has three neighbors, then two of its adjacent arrows belong to one $3$-cycle, and the third one does not belong to any $3$-cycle;
        \item[(e)] There is a unique loop $\rho$ at a vertex $t$ which has one neighbor, or has two neighbors and its traversed by a $3$-cycle.
\end{itemize}
It has been proved in~\cite{Vatne11, Yang12} that the quiver $Q_T$ belongs to $\mathcal{Q}_n$. Moreover,  the endomorphism algebra $A$ of $T$ is determined by its underlying Gabriel quiver $Q_T$. Namely,
\begin{theorem}~\label{t:algebra-maximal-tube}
 An algebra is the endomorphism algebra of a basic maximal rigid object in the cluster tube $\cc$ if and only if it is isomorphic to $\C Q/I$ for some $Q\in \mathcal{Q}_{n}$, where $I$ is the ideal generated by the square of the unique loop $\rho$ and all paths of length $2$ in a $3$-cycle.
\end{theorem}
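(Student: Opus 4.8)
The membership $Q_T\in\mathcal{Q}_n$ has already been established in~\cite{Vatne11, Yang12}, so I would take it as given and split the remaining work into two directions: for necessity, identify the relations of $A=\End_\cc(T)$ in terms of $Q_T$; for sufficiency, show that every $Q\in\mathcal{Q}_n$ actually occurs as some $Q_T$. The forward direction rests on the decomposition of morphism spaces in Lemma~\ref{l:compute-morphism-cluster-tube} together with the standardness (mesh-category structure) of $\ct$, while the backward direction rests on the compatibility of mutation of maximal rigid objects with quiver mutation inside $\mathcal{Q}_n$.

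For necessity, fix a basic maximal rigid $T=\bigoplus_{i=1}^nT_i$ with $T_1$ of length $n$ and all $T_i\in\cw_{T_1}$ (Lemma~\ref{l:basic-property-cluster-tube}). By Lemma~\ref{l:compute-morphism-cluster-tube} each $\Hom_\cc(T_i,T_j)$ splits into a $\ct$-part $\Hom_\ct(T_i,T_j)$ and a $\der$-part $D\Hom_\ct(T_j,\tau^2T_i)$, and I would first record the dimensions of these from the combinatorics of $\cw_{T_1}$ and Lemma~\ref{l:morphism-non-vanish}. Since $\ct$ is standard, compositions of $\ct$-morphisms are computed in the mesh category, so the only relations among composites of irreducible $\ct$-morphisms are the mesh relations; these account precisely for the vanishing of the length-two paths lying in each $3$-cycle of $Q_T$. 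The loop $\rho$ at the distinguished vertex $t$ arises from a nonzero $\der$-morphism, namely a generator of the summand $D\Hom_\ct(T_t,\tau^2T_t)$; because the composition of two $\der$-morphisms is zero, one gets $\rho^2=0$ at once, and the mixed relations tying $\rho$ to the adjacent $3$-cycle follow from the fact that no $\ct$-morphism can factor through a $\der$-morphism.

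The crucial point is completeness: that the surjection $\C Q_T/I\twoheadrightarrow A$ induced by a choice of irreducible maps is an isomorphism. For this I would compare dimensions, checking that $\dim_\C A=\sum_{i,j}\dim_\C\Hom_\cc(T_i,T_j)$, read off the wing combinatorics via Lemma~\ref{l:compute-morphism-cluster-tube}, equals the number of paths of $Q_T$ surviving modulo $I$, a purely combinatorial count over the quivers in $\mathcal{Q}_n$; matching the two vertex-pair by vertex-pair finishes necessity. For sufficiency, I would argue that every $Q\in\mathcal{Q}_n$ is realized: since basic maximal rigid objects of $\cc$ mutate one summand at a time and their combinatorics categorify those of the type-$\mathrm{C}_n$ cluster algebra~\cite{BuanMarshVatne}, the mutation graph is connected and its quivers transform by the Fomin--Zelevinsky rule; one checks that $\mathcal{Q}_n$ is stable under, and a single orbit of, this mutation, so that starting from one explicitly realized $T$ every $Q\in\mathcal{Q}_n$ is reached, and by the first part its algebra carries exactly the relations $\rho^2$ and the $3$-cycle relations.

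The main obstacle I anticipate is the completeness count at the distinguished vertex $t$. The orbit-category construction mixes the $\ct$- and $\der$-components precisely around the length-$n$ summand $T_1$ and the loop, so one must argue carefully that no relations beyond $\rho^2$ and the $3$-cycle relations appear there, and that the $\der$-morphisms contribute exactly the paths through $\rho$ and no more. Controlling these mixed compositions through the explicit mesh description of $\ct$ is the technical heart of the argument.
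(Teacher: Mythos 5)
The first thing to note is that the paper contains no proof of this statement: Theorem~\ref{t:algebra-maximal-tube} is imported verbatim from~\cite{Vatne11, Yang12} (the surrounding text says explicitly that these facts ``have been proved in'' those references), so your proposal can only be measured against the strategies of Vatne and Yang, which proceed by direct analysis of configurations of wings in the tube and explicit computation of the endomorphism algebras, not by a mutation-orbit argument. Your necessity half has the right shape and matches that spirit: the loop is indeed the $\der$-generator of $e_1Ae_1\cong\C[x]/(x^2)$ (cf.\ Remark~\ref{r:locally-free}), so $\rho^2=0$ follows from $\der\circ\der=0$, and completeness via a dimension count ($\dim_\C A=\sum_{i,j}\dim_\C\Hom_\cc(T_i,T_j)$ versus the path count of $\C Q_T/I$, both computable from the wing combinatorics) is a legitimate, if laborious, device. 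But your attribution of the $3$-cycle relations to mesh relations alone is not quite right: in these quivers a $3$-cycle typically contains an arrow represented by a $\der$-morphism, and the composite of a $\ct$-morphism with a $\der$-morphism is again a $\der$-morphism, generally nonzero. The fact you invoke --- that no $\ct$-morphism factors through a $\der$-morphism --- only rules out such a composite being a nonzero $\ct$-morphism; to show it \emph{vanishes} you must compute its $\der$-component inside $D\Hom_\ct(T_j,\tau^2T_i)$ via the AR-duality pairing. You flag the mixed compositions near $T_1$ as the technical heart, which is the correct worry, but the mechanism you cite does not by itself close it.

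The genuine gap is in your sufficiency half. First, Fomin--Zelevinsky quiver mutation is not defined for quivers with loops, so ``the quivers transform by the FZ rule'' is false as stated for $Q_T$; what mutates by the FZ rule is the matrix $B_T$ ($\mu_k(B_T)=B_{\mu_k(T)}$, as recorded in the paper), and you would first need the observation that $Q_T$ is recovered from $B_T$ by halving the entries in the distinguished column (Lemma~\ref{l:skew-symmetrizable-matrix-via-quiver}) and attaching the loop at the vertex of the length-$n$ summand. Second, and more seriously, the assertions that $\mathcal{Q}_n$ is stable under this mutation and forms a single mutation orbit are precisely the hard combinatorial content of the classification --- essentially Vatne's main theorem --- and cannot be dispatched with ``one checks.'' Vatne avoids the orbit argument altogether by realizing each $Q\in\mathcal{Q}_n$ directly: one decodes from the conditions (a)--(e) a nested configuration of wings below a length-$n$ object and exhibits a basic maximal rigid $T$ with $Q_T=Q$. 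Connectedness of the mutation graph of maximal rigid objects (known from~\cite{BuanMarshVatne, ZhouZhu}) gives you reachability of all \emph{realized} quivers from the initial one, but it does not give you that every abstract member of $\mathcal{Q}_n$ is realized, which is what sufficiency requires; as written, your plan assumes the conclusion at this step.
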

As a direct consequence of Theorem~\ref{t:algebra-maximal-tube},  the endomorphism algebra $A=\End_\cc(T)$ is a gentle algebra and there is  a unique loop $\rho$ in the Gabriel quiver $Q_T$ of $A$. 
 It is clear that the source of the unique loop $\rho$ is the vertex corresponding to the unique indecomposable direct summand of $T$ with length $n$. We refer to~\cite{ButlerRingel} for the definition and basic results for gentle algebras. For later use, let us recall the definition of string and the construction of string modules for the algebra $A$. Let $I_T$ be the ideal of $\C Q_T$ as in Theorem~\ref{t:algebra-maximal-tube}.
 
 For any arrow $\beta$ of $Q_T$, denote by $\beta^{-1}$ a formal inverse for $\beta$, with $s(\beta^{-1}):=t(\beta)$ and $t(\beta^{-1}):=s(\beta)$. A word $w=\alpha_n\alpha_{n-1}\cdots \alpha_1$ of arrows and their formal inverses is called a {\it string} of length $n\geq 1$ if $\alpha_{i}\neq \alpha_{i+1}^{-1}, t(\alpha_i)=s(\alpha_{i+1})$ for all $1\leq i<n$, and no subword $\alpha_{i+r}\alpha_{i+r-1}\cdots\alpha_{i}$ or its inverse belongs to $I_T$. For each vertex $u$ of $Q_T$, we define two strings $1_{(u,\delta)}, \delta=\pm 1$, with both $s(1_{(u,\delta)})=u$ and $t(1_{(u,\delta)})=u$, and define $1^{-1}_{(u,\delta)}=1_{(u,-\delta)}$.  For each string $w$ of $A$, one defines a unique {\it string module} $M(w)$ for $A$ following~\cite{ButlerRingel}. Note that a right $A$-module identifies a representation of $Q_T^{op}$ satisfying the opposite relations $I_T^{op}$.
For any string $w=\alpha_n\alpha_{n-1}\cdots \alpha_1$ or $w=1_{(u,t)}$ of $A$, we define a representation $M(w)$ of $Q_T^{op}$ as follows. Let $u(i)=s(\alpha_{i+1}), 0\leq i<n$ and $u(n)=t(\alpha_n)=t(w)$. For each vertex $v$ of $Q_T^{op}$, set $I_v=\{i~|~u(i)=v\}\subseteq \{0,1,\cdots, n\}$. Let $M(w)_v$ be a vector space spanned by the base vectors $z_i,i\in I_v$. For any arrow $\beta$ in $Q_T$, if $\alpha_i=\beta$, define $\beta^{-1}(z_i)=z_{i-1}$; if $\alpha_i=\beta^{-1}$, define $\beta^{-1}(z_{i-1})=z_i$ for $1\leq i\leq n$; otherwise set the linear map associated to $\beta^{-1}$ to be zero. Then $M(w)$ is the right string $A$-module associated to the string $w$.
 
  \begin{example}~\label{ex:string-module}
 Let $A$ be the $\C$-algebra given by the quiver $Q$
\[\xymatrix@=12pt{&1\ar@{->}@(ur,ul)^{\rho}\ar[dr]^{\alpha}\\
3\ar[ur]^{\gamma}&&2\ar[ll]^{\beta}}
\]bound by the relations $\beta\alpha=0, \gamma\beta=0, \alpha\gamma=0$ and $\rho^2=0$. We know that $A$ is the endomorphism algebra of a basic maximal rigid object, say $T=T_1\oplus T_2\oplus T_3$,  in the cluster tube $\cc_{4}$ by Theorem~\ref{t:algebra-maximal-tube}.

Let $w_1=\alpha\rho\gamma$ and $w_2=\alpha\rho^{-1}\gamma$ be two strings for $A$. The right $A$-modules $M(w_1)$ and $M(w_2)$ are described as follows
\[M(w_1)=\xymatrix@=12pt{&\C\oplus \C\ar@{->}@(ul,ur)^{\tiny \begin{bmatrix}0&1\\0&0\end{bmatrix}}\ar[dl]_{\tiny \begin{bmatrix}1\\ 0\end{bmatrix}}\\
\C\ar[rr]^{0}&&\C\ar[ul]_{\tiny \begin{bmatrix}0&1\end{bmatrix}}}~~~\quad
 M(w_2)=\xymatrix@=12pt{&\C\oplus \C\ar@{->}@(ul,ur)^{\tiny\begin{bmatrix}0&0\\1&0\end{bmatrix}}\ar[dl]_{\tiny\begin{bmatrix}1\\ 0\end{bmatrix}}\\
\C\ar[rr]^{0}&&\C.\ar[ul]_{\tiny\begin{bmatrix}0&1\end{bmatrix}}}
\]
 \end{example}
 
 It is easy to observe that $A$ is of representation-finite type (cf. ~\cite{Vatne11, Yang12}). Hence each indecomposable $A$-module is a string module and has a presentation as $M(w)$ for some string $w$. 
\begin{lemma}~\label{l:dimension-vector-loop}
Let $T=\bigoplus_{i=1}^nT_i$ be a basic maximal rigid object of $\cc$ with $l(T_1)=n$. Denote by $A=\End_\cc(T)$ the endomorphism algebra of $T$.
For each indecomposable $A$-module $M$, we have either
$[\dimv M: 1]=0 $~ or~$[\dimv M:1]=2$. 
\end{lemma}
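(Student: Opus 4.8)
Since $A$ is representation-finite, every indecomposable $A$-module is a string module $M(w)$, so I would argue entirely on the combinatorial level of the string $w$. For a string $w=\alpha_\ell\cdots\alpha_1$ the quantity $[\dimv M(w):1]$ is, by the construction of $M(w)$, the number of indices $i\in\{0,\dots,\ell\}$ with $u(i)=1$, i.e. the number of times the walk underlying $w$ passes through the loop vertex $1$ (the vertex carrying $\rho$, which by Theorem~\ref{t:algebra-maximal-tube} is the one attached to the length-$n$ summand $T_1$). Thus the statement becomes the purely combinatorial assertion that such a walk meets vertex $1$ either $0$ or $2$ times.

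The core of the proof is a local analysis at vertex $1$. By Theorem~\ref{t:algebra-maximal-tube} the relations of $A$ are $\rho^2$ together with the length-two subpaths of each $3$-cycle, and by the description of $\mathcal{Q}_{n}$ the vertex $1$ has either a single neighbour or two neighbours joined to it by a $3$-cycle, with no $2$-cycle (condition~(a)); in particular at most one non-loop arrow enters $1$ and at most one leaves it. I would then enumerate, in each configuration, the admissible pairs (incoming letter, outgoing letter) at an interior occurrence of $1$ in $w$, among the letters $\rho^{\pm1},\alpha^{\pm1},\gamma^{\pm1}$ incident to $1$. The key observation, which I expect to emerge uniformly from this bookkeeping, is that every admissible interior pair contains exactly one loop letter $\rho^{\pm1}$: a non-loop transit through $1$ would use a non-loop arrow in and a non-loop arrow out, and in the two-neighbour case this is the path $\alpha\gamma$ (or its inverse) killed by the $3$-cycle relation, while in the one-neighbour case there is only one non-loop arrow at $1$, so such a pair must backtrack; meanwhile $\rho\rho$ and $\rho^{-1}\rho^{-1}$ are forbidden by $\rho^2$. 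Hence a string can cross vertex $1$ only by traversing the loop, so the occurrences of $1$ are glued in pairs by the loop letter, and since $\rho^2\in I_T$ (and backtracking is forbidden) the loop is traversed at most once in all of $w$. This leaves the two and only two admissible local patterns around vertex $1$, namely $\alpha\rho\gamma$ and $\alpha\rho^{-1}\gamma$ of Example~\ref{ex:string-module}, and gives the count $0$ or $2$.

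As a guide and a check I would keep in mind the homological reformulation: under the equivalence $F=\Hom_\cc(T,-)$ one has $[\dimv M:1]=\dim_\C\Hom_\cc(T_1,M)$ for the object $M\in\opname{pr}T$ with $F(M)$ the given module, and since $l(T_1)=n$, Lemma~\ref{l:morphism-non-vanish} already pins this dimension to $0$ on the wing $\cw_{\tau T_1}$ and to $2$ on $T_1^{\sqsubset}\cap{}^{\sqsupset}\tau^2T_1$. The step I expect to be the main obstacle is exactly the boundary behaviour of the walk at vertex $1$: the interior analysis only controls occurrences of $1$ that have both a predecessor and a successor, so one must still rule out a walk that begins or ends at vertex $1$ without traversing the loop (equivalently, on the homological side, exclude the region lying in only one of $T_1^{\sqsubset}$ and ${}^{\sqsupset}\tau^2T_1$). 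Handling this endpoint case carefully, using the explicit shape of the length-$n$ object $T_1$ inside the rank-$(n+1)$ tube together with the relations forcing any approach to $1$ to pass through $\rho$, is where the real content of the lemma lies.
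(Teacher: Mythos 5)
Your interior analysis is exactly the paper's argument: the proof given there consists of the observation that, by Theorem~\ref{t:algebra-maximal-tube} and the description of $\mathcal{Q}_n$, the loop $\rho$ and its inverse occur at most once in any string, followed by an appeal to the construction of string modules. So up to that point you and the paper coincide, and the paper is just as silent as you are about the boundary behaviour of the walk.

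However, the endpoint case you flag as ``the main obstacle'' is not a technicality that more care can close; it is a genuine failure of the statement as literally formulated, and hence a gap that neither your proposal nor the paper's proof fills. A string is perfectly allowed to begin or end at vertex $1$ without traversing the loop: the trivial string $1_{(1,\delta)}$ gives the simple module $S_1$ with $[\dimv S_1:1]=1$, and a single arrow incident to vertex $1$ (e.g.\ $\gamma$ in Example~\ref{ex:string-module}) gives an indecomposable string module with $[\dimv M:1]=1$. These modules even appear in the paper's own Figure~1, as the vertices labelled $1$, $\begin{smallmatrix}1\\3\end{smallmatrix}$ and $\begin{smallmatrix}2\\1\end{smallmatrix}$ of the AR quiver, so no argument can rule them out. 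What your local analysis (equivalently, the ``loop occurs at most once'' claim) actually yields is the bound $[\dimv M:1]\le 2$: any loop-free return of the walk to vertex $1$ could be extended by $\rho$ on both ends to a string containing two loop letters, so consecutive visits to vertex $1$ must be joined by the (unique) loop letter, giving at most two visits. The dichotomy $0$ or $2$ then holds precisely for the modules to which the lemma is later applied, namely indecomposable \emph{locally free} modules: freeness of $Me_1$ over $e_1Ae_1\cong\C[x]/(x^2)$ forces the number of visits to vertex $1$ to equal twice the number of loop letters in the string, hence to be even, and together with the bound $\le 2$ this gives $0$ or $2$. So the correct repair of both your argument and the lemma is either to weaken the conclusion to $[\dimv M:1]\le 2$, or to add the hypothesis that $M$ is locally free; either version suffices for the uses in Remark~\ref{r:locally-free} and beyond.
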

\begin{proof}
By the assumption $l(T_1)=n$, we know that the source of the unique loop $\rho$ is the vertex $1$. According to Theorem~\ref{t:algebra-maximal-tube} and the description of $\mathcal{Q}_n$, we conclude that for each string $w$, $\rho$ and its formal inverse $\rho^{-1}$ appear in $w$ at most once. Now the result follows from the construction of string modules.
\end{proof}

\subsection{Cluster algebras arising from cluster tube}~\label{s: skew-symmetrizable-matrix}
For a given basic maximal rigid object $T=\overline{T}\oplus T_k$ in $\cc$ with indecomposable direct summand $T_k$,  the {\it mutation $\mu_k(T)$ of $T$ at $T_k$} is a basic maximal rigid object obtained by replacing $T_k$ by another indecomposable object $T_k^*$.
The objects $T_k^*$ and $ T_k$ are related by the following {\it exchange triangles}
\[T_k^*\xrightarrow{f}U_{T_k, T\setminus T_k}\xrightarrow{g}T_k\to \Sigma T_k^* ~ \text{and}~ T_k\xrightarrow{f'} U'_{T_k, T\setminus T_k}\xrightarrow{g'}\ T_k^*\to\Sigma T_k,
\]
where $f$, $f'$ are minimal left $\add \overline{T}$-approximations and $g$, $g'$ are minimal right $\add \overline{T}$-approximations. In this case, $\overline{T}$ is called an {\it almost complete maximal rigid object} and $(T_k, T_k^*)$ is an {\it exchange pair} of $\cc$.

For each basic maximal rigid object $T=\bigoplus_{i=1}^nT_i$, we define a matrix $B_T=(b_{ij})\in M_{n}(\Z)$ as follows
\[b_{ij}=\mathfrak{m}_{U_{T_j, T\setminus T_j}}T_i-\mathfrak{m}_{U'_{T_j, T\setminus T_j}}T_i,
\]
where $\mathfrak{m}_YX$ denotes the multiplicity of $X$ as a direct summand of $Y$. 
The following observation interprets the matrix $B_T$ via the quiver $Q_T$ ({\it cf.} Lemma 2.12 of~\cite{FGL1}).
\begin{lemma}~\label{l:skew-symmetrizable-matrix-via-quiver}
Let $T=\bigoplus_{i=1}^nT_i$ be a basic maximal rigid object of $\cc$ with $l(T_1)=n$ and $Q_T$ its associated quiver. Let $B_T=(b_{ij})\in M_n(\Z)$ be the skew-symmetrizable matrix associated to $T$. Then for $i\neq j$, we have
\[b_{ij}=\begin{cases}|\{\text{arrows $T_i\to T_j$}\}|-|\{\text{arrows $T_j\to T_i$}\}|& j\neq 1;\\
2|\{\text{arrows $T_i\to T_1$}\}|-2|\{\text{arrows $T_1\to T_i$}\}|& j=1.
\end{cases}
\]
\end{lemma}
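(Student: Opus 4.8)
The plan is to read both multiplicities $\mathfrak{m}_{U_{T_j,T\setminus T_j}}T_i$ and $\mathfrak{m}_{U'_{T_j,T\setminus T_j}}T_i$ off the quiver $Q_T$, using that the two exchange triangles realise minimal $\add\overline T$-approximations, where $\overline T=T\setminus T_j$. Write $U=U_{T_j,T\setminus T_j}$ and $U'=U'_{T_j,T\setminus T_j}$. By the standard functorial description of approximations, the source of the minimal right $\add\overline T$-approximation $g\colon U\to T_j$ is determined by the projective cover, over $\End_\cc(\overline T)$, of the module $\Hom_\cc(\overline T,T_j)$; taking tops gives
\[
\mathfrak{m}_{U}T_i=\dim_\C \Hom_\cc(T_i,T_j)\Big/\sum_{k\neq j}\Hom_\cc(T_k,T_j)\circ\opname{rad}(T_i,T_k),
\]
and dually $\mathfrak{m}_{U'}T_i=\dim_\C\Hom_\cc(T_j,T_i)/\sum_{k\neq j}\opname{rad}(T_k,T_i)\circ\Hom_\cc(T_j,T_k)$. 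Through the equivalence $F|_{\add T}\colon\add T\xrightarrow{\sim}\proj A$ these quotients are simply tops of Hom-spaces between the projectives $P_i=F(T_i)$, which is the form in which I would actually carry out the computation.

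First I would settle the case $j\neq1$. Then the loop vertex $T_1$ still belongs to $\overline T$, so its radical self-map $\rho\in\opname{rad}\End_\cc(T_1)$ occurs in the denominators above; together with the fact that every summand $T_k$ with $k\neq1$ satisfies $\End_\cc(T_k)=\C$ (no non-trivial radical endomorphisms), this collapses the quotients onto the classes of the arrows of $Q_T$. Concretely, a map such as $\nu\rho\colon T_1\to T_j$ obtained from an arrow $\nu\colon T_1\to T_j$ by precomposing with the loop lies in $\Hom_\cc(T_1,T_j)\circ\opname{rad}(T_1,T_1)$ and is therefore absorbed. Hence $\mathfrak{m}_{U}T_i=|\{\text{arrows }T_i\to T_j\}|$ and $\mathfrak{m}_{U'}T_i=|\{\text{arrows }T_j\to T_i\}|$, and subtracting gives $b_{ij}=|\{\text{arrows }T_i\to T_j\}|-|\{\text{arrows }T_j\to T_i\}|$, as claimed.

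The essential case is $j=1$, where the factor $2$ and the skew-symmetrizability of $B_T$ are forced. Here $\overline T=T\setminus T_1$ no longer contains the loop vertex, so $\opname{rad}\End_\cc(T_1)=\langle\rho\rangle$ is absent from the denominators. For an arrow $\mu\colon T_i\to T_1$ the composite $\rho\mu$ is again a morphism $T_i\to T_1$; by Theorem~\ref{t:algebra-maximal-tube} the only relations are $\rho^2=0$ and the length-two paths in the oriented $3$-cycles, so $\rho\mu\neq0$, while Lemma~\ref{l:morphism-non-vanish} gives $\dim_\C\Hom_\cc(T_i,T_1)=2$ with basis $\{\mu,\rho\mu\}$. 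Since $\rho\mu$ can be peeled off $\mu$ only through the loop, \ie through a radical endomorphism of $T_1$, which is no longer available once $T_1$ is deleted, both $\mu$ and $\rho\mu$ survive in the top. Thus $\mathfrak{m}_{U}T_i=2\,|\{\text{arrows }T_i\to T_1\}|$, the symmetric analysis of the outgoing arrows $T_1\to T_i$ gives $\mathfrak{m}_{U'}T_i=2\,|\{\text{arrows }T_1\to T_i\}|$, and subtracting yields the asserted $b_{i1}$. This doubling is exactly what underlies the dichotomy $[\dimv M:1]\in\{0,2\}$ of Lemma~\ref{l:dimension-vector-loop} and makes $B_T$ skew-symmetrizable with symmetrizer $\opname{diag}(2,1,\dots,1)$.

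I expect the only real difficulty to be the uniform bookkeeping in this last step: one must verify, for every summand $T_i$ adjacent to $T_1$, that $\rho\mu$ is nonzero and independent of $\mu$ modulo maps factoring through the other summands, and that no morphisms $T_i\to T_1$ or $T_1\to T_i$ are missed. Because a quiver in $\mathcal{Q}_n$ has at most one arrow between two distinct vertices, $|\{\text{arrows }T_i\to T_1\}|\in\{0,1\}$ and the outcome is always $0$ or $2$; the cleanest way to confirm this is to pass to $\proj A$ via $F$, where the structure of the indecomposable projective $P_1$, carrying two copies of the simple at the loop vertex exactly as in Example~\ref{ex:string-module}, makes the doubling transparent.
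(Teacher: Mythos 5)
Your argument is essentially correct, but note first that this paper contains no internal proof of Lemma~\ref{l:skew-symmetrizable-matrix-via-quiver}: it is quoted as an observation from Lemma 2.12 of the companion paper \cite{FGL1}, so there is no in-text proof to compare against. Taken on its own merits, your route works: translating the minimal right (resp.\ left) $\add\overline{T}$-approximation into the projective cover of $\Hom_\cc(\overline{T},T_j)$ over $\End_\cc(\overline{T})$ (resp.\ its dual) is the standard correspondence, and computing the resulting tops through the monomial presentation $A\cong\C Q_T/I_T$ of Theorem~\ref{t:algebra-maximal-tube} handles both cases uniformly. You also isolate exactly the right mechanism for the dichotomy: when $j\neq 1$ the loop vertex survives in $\overline{T}$, so composites through $\opname{rad}(T_1,T_1)=\langle\rho\rangle$ are absorbed into the denominator and the top is spanned by the arrows $i\to j$; when $j=1$ that absorption is unavailable, and since $I_T$ is generated by $\rho^2$ and length-two paths inside oriented $3$-cycles (the loop belongs to no $3$-cycle), both $\mu$ and $\rho\mu$ are nonzero paths with no intermediate vertex other than $1$, hence survive in the top and produce the factor $2$.

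Two points deserve tightening, though neither is a genuine gap. First, your citation of Lemma~\ref{l:morphism-non-vanish} for $\dim_\C\Hom_\cc(T_i,T_1)=2$ is backwards: that lemma concerns maps \emph{out of} the length-$n$ object; for maps into $T_1$ one should invoke the $2$-Calabi--Yau duality $\Hom_\cc(T_i,T_1)\cong D\Hom_\cc(T_1,\tau^2T_i)$ --- or, better, observe that the dimension count is not actually needed: because $I_T$ is monomial, the nonzero paths form a basis of $A$, so $\mu$ and $\rho\mu$ are automatically independent modulo the span of paths visiting some intermediate vertex $k\neq 1$, which is precisely your denominator $\sum_{k\neq 1}\Hom_\cc(T_k,T_1)\circ\opname{rad}(T_i,T_k)$. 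Second, in the case $j\neq 1$ one should record why every nonzero path of length at least $2$ from $i$ to $j$ is absorbed: splitting at the first arrow $\alpha\colon i\to k$, the case $k=j$ would force a nonzero radical cycle at $j$, impossible since $\End_\cc(T_j)\cong\C$ for $j\neq 1$ by Remark~\ref{r:locally-free}; with this remark your identification of the top with $\opname{rad}(T_i,T_j)/\opname{rad}^2(T_i,T_j)$, i.e.\ with the arrow count defining $Q_T$, is complete.
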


 The matrix $B_T$ is skew-symmetrizable and the associated cluster algebra $\mathcal{A}(B_T)$ is  of type $\mathrm{C}_n$~\cite{BuanMarshVatne}. It is known that the matrix $B_T$ and the matrix of the mutation $\mu_k(T)$ are related by Fomin-Zelevinsky's matrix mutation at $k$, {\it {i.e.}} $\mu_k(B_T)=B_{\mu_k(T)}$. Moreover,  the following result was proved~({\it cf.}~ Theorem 3.5 of ~\cite{BuanMarshVatne}).
\begin{theorem}~\label{t:BMV-main-theorem}
Let $T$ be a basic maximal rigid object of $\cc$ and $\mathcal{A}_T:=\mathcal{A}(B_T)$ the associated cluster algebra.
There is a bijection between the set of indecomposable rigid objects of $\cc$ and the set of cluster variables of $\mathcal{A}_T$. The bijection induces a bijection between the basic maximal rigid objects of $\cc$ and  the clusters of $\mathcal{A}_T$ such that $\Sigma T$ corresponds to the initial cluster of $\mathcal{A}_T$. Moreover, the bijection is compatible with mutations.
\end{theorem}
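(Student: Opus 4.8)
The plan is to match the two mutation structures along the $n$-regular tree $\mathbb{T}_n$ and then to promote the resulting correspondence to a bijection by a counting argument. Fix the root vertex $t_0\in\mathbb{T}_n$ and attach to it the pair $(\Sigma T,(B_T,\mathbf{x}))$, pairing the $i$-th indecomposable summand $\Sigma T_i$ with the initial cluster variable $x_i$; this is consistent because $\Sigma$ is a triangle autoequivalence of $\cc$, so the quiver and exchange triangles of $\Sigma T$ coincide with those of $T$ and hence $B_{\Sigma T}=B_T$. By Lemma~\ref{l:basic-property-cluster-tube} every basic maximal rigid object of $\cc$ has exactly $n$ indecomposable summands, and the exchange triangles of Section~\ref{s: skew-symmetrizable-matrix} allow mutation at each of them; thus maximal rigid objects admit exactly $n$ mutation directions at every object, matching the $n$ edges at each vertex of $\mathbb{T}_n$. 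Walking along $\mathbb{T}_n$ and mutating simultaneously on both sides yields, for every vertex $t$, a basic maximal rigid object $R_t$ and a seed $(B_t,\mathbf{x}_t)$, and the identity $\mu_k(B_{R_t})=B_{\mu_k(R_t)}$ recalled above guarantees that $B_t=B_{R_t}$ persists at every vertex, so the recursion is consistent and the $k$-th summand of $R_t$ is paired with the $k$-th cluster variable of $\mathbf{x}_t$.

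Next I would check that every indecomposable rigid object occurs as a summand of some $R_t$: by Lemma~\ref{l:basic-property-cluster-tube} any indecomposable rigid object lies in the wing of a length-$n$ object and can be completed to a basic maximal rigid object, and the mutation graph of such objects is connected, so the assignment $\phi\colon X\mapsto(\text{cluster variable paired with }X)$ is defined on all indecomposable rigid objects of $\cc$. By construction $\phi$ sends $R_t$ to the cluster $\mathbf{x}_t$ and intertwines the two mutation operations, so it will automatically deliver the induced correspondence of maximal rigid objects with clusters (with $\Sigma T\mapsto\mathbf{x}_{t_0}$) and the compatibility with mutations asserted in the theorem, once $\phi$ is known to be a well-defined bijection.

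The crux is the well-definedness of $\phi$: a priori the cluster variable attached to $X$ could depend on the path in $\mathbb{T}_n$ chosen to reach a vertex $t$ with $X\in\add R_t$. Equivalently, I must show that the two coverings $\mathbb{T}_n\to\{\text{basic maximal rigid objects of }\cc\}$ and $\mathbb{T}_n\to\{\text{seeds of }\mathcal{A}_T\}$ have the same fibres, i.e. that the mutation graph of maximal rigid objects and the exchange graph of $\mathcal{A}_T$ are isomorphic rather than merely both quotients of $\mathbb{T}_n$. Here I would invoke the finite type structure: since $\mathcal{A}_T$ is of type $\mathrm{C}_n$, the Fomin--Zelevinsky classification~\cite{FominZelevinsky03} shows that its exchange graph is a finite connected graph determined by the Cartan type, and in particular a seed is determined by its cluster; comparing this with the finite connected mutation graph of maximal rigid objects in $\cc$ and using the matrix compatibility forces the two coverings to identify precisely the same pairs of vertices. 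This comparison---which can alternatively be carried out through an explicit combinatorial model of type-$\mathrm{C}_n$ clusters (centrally symmetric triangulations) matched against the wing combinatorics of $\cc$---is the step I expect to be the main obstacle.

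Bijectivity then follows by counting. The indecomposable rigid objects of $\cc$ are exactly the indecomposables $(a,b)$ with $1\leq a\leq n+1$ and $1\leq b\leq n$ by Lemma~\ref{l:basic-property-cluster-tube}(1), so there are precisely $n(n+1)$ of them; on the other side, a cluster algebra of type $\mathrm{C}_n$ has exactly $n(n+1)$ cluster variables, one for each of the $n^2+n$ almost positive roots of the root system of type $\mathrm{C}_n$. Since $\phi$ is surjective (every cluster variable of $\mathcal{A}_T$ appears in some $\mathbf{x}_t$) and the two finite sets have equal cardinality, $\phi$ is a bijection; restricting it to maximal rigid objects and clusters and to the root vertex $t_0$ gives the remaining assertions of the theorem.
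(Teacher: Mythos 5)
A point of context first: the paper never proves this statement. It is imported wholesale as a known result (``Moreover, the following result was proved ({\it cf.} Theorem 3.5 of \cite{BuanMarshVatne})'') and is then used as a black box in Section~5 to prove the paper's actual main theorem. So there is no internal proof to compare against; your attempt has to be measured against the proof in \cite{BuanMarshVatne} itself.

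Measured that way, there is a genuine gap, and it sits exactly where you flagged it. Your reduction --- run the two mutation recursions in parallel over $\mathbb{T}_n$ and show that the two resulting coverings of $\mathbb{T}_n$ have equal fibres --- is a correct framing, and the final counting step (both sides have $n(n+1)$ elements, so a well-defined surjection is a bijection) would indeed finish the job. But the argument you offer for equal fibres is not valid: knowing that both quotients of $\mathbb{T}_n$ are finite and connected, that $B_{R_t}=B_t$ at every vertex $t$, and that a seed of a finite type cluster algebra is determined by its cluster does \emph{not} force the two equivalence relations on the vertices of $\mathbb{T}_n$ to coincide. A priori one relation could strictly refine the other --- in which case $\phi$ is either multivalued or fails injectivity and the counting collapses --- or the two could be incomparable; the abstract data you invoke is equally consistent with all of these scenarios, since nothing in it couples the identifications made on the categorical side to those made on the seed side. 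Establishing that coupling is precisely the content of the Buan--Marsh--Vatne theorem, and in \cite{BuanMarshVatne} it is done by explicit combinatorics: indecomposable rigid objects of $\cc$ are matched with the combinatorial model for type $\mathrm{B}/\mathrm{C}$ clusters (almost positive roots, equivalently centrally symmetric diagonals), and the exchange relations are verified to correspond on both sides. In other words, the ``alternative'' route you mention in one parenthetical sentence is the proof, not an alternative to it. There are also two smaller unproved assertions that your construction needs before $\phi$ is even defined on all objects: that every indecomposable rigid object of $\cc$ completes to a basic maximal rigid object, and that the mutation graph of basic maximal rigid objects is connected; both are true but nontrivial facts from the literature ({\it cf.} \cite{BuanMarshVatne,ZhouZhu}) and cannot simply be asserted.
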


\subsection{Another construction of the matrix $B_T$}
In this subsection, we give another construction of the matrix $B_T$ for a basic maximal rigid object $T=\bigoplus_{i=1}^nT_i$. Recall that $A=\End_\cc(T)$ is the endomorphism algebra of $T$.
Let  $S_i$ be the simple $A$-module associated to the indecomposable direct summand  $T_i$.
Let $W$ be the sum of the cube of the unique loop $\rho$ and all $3$-cycles of $Q_T$ which gives a potential on $Q_T$. Then $A$ is isomorphic to the {\it Jacobian algebra} $ J(Q_T,W)$ associated to the quiver with potential $(Q_T,W)$ ({\it cf.}~Section 6~ in \cite{Yang12} for details).
To the quiver with potential $(Q_T,W)$, one may construct a so-called {\it Ginzburg dg algebra} $\Gamma:=\Gamma_{(Q_T,W)}$ and we refer to ~\cite{Amiot} for the precisely construction of Ginzburg dg algebras and the Jacobian algebras associated to quivers with potentials ({\it cf.} also~\cite{Ginzburg}). 

 Let $\der(\Gamma)$ be the derived category of $\Gamma$ with suspension functor $\Sigma $ and $\der_{fd}(\Gamma)$ the finite dimensional derived category of $\Gamma$. Namely, $\der_{fd}(\Gamma)$ is the full subcategory of $\der(\Gamma)$ of dg $\Gamma$-modules whose homology is of finite total dimension, which is  a 3-Calabi-Yau triangulated category ({\it cf.~\cite{Keller}}).
Denote by $\go(\der_{fd}(\Gamma))$ the Grothendieck group of $\der_{fd}(\Gamma)$. Let $\langle-, -\rangle_{\Gamma}:\go(\der_{fd}(\Gamma))\times \go(\der_{fd}(\Gamma))\to \C$ be the Euler bilinear form of $\go(\der_{fd}(\Gamma))$ given by
\[\langle [X], [Y]\rangle_\Gamma:=\sum_{i\in \Z}(-1)^i\dim_\C\Hom_{\der_{fd}(\Gamma)}(X, \Sigma^iY),
\]
where $X, Y\in \der_{fd}(\Gamma)$.
Note that $\Gamma$ is a non-positive dg algebra, which implies that there is canonical bounded $t$-structure on $\der_{fd}(\Gamma)$ whose heart is equivalent to the category $\mod A$~({\it cf.}~\cite{Amiot}). 
 In particular, the Euler form $\langle-,-\rangle_{\Gamma}$ induces an anti-symmetric Euler bilinear form $\langle-,-\rangle_a:\go(\mod A)\times \go(\mod A)\to \C$ of $\go(\mod A)$. More precisely, for any $M, N\in \mod A$, we may regard $M,N$ as objects of $\der_{fd}(\Gamma)$ and we have
\begin{eqnarray*}
&&\langle [M], [N]\rangle_a:=\langle[M],[N]\rangle_\Gamma=\sum_{i\in \Z}(-1)^i\dim_\C\Hom_{\der_{fd}(\Gamma)}(M,\Sigma^i N)\\
&=&\dim_\C\Hom_A(M,N)-\dim_\C\Ext_A^1(M,N)-\dim_\C\Hom_A(N,M)+\dim_\C\Ext_A^1(N,M),
\end{eqnarray*}
where the last equality follows from the $3$-Calabi-Yau property of $\der_{fd}(\Gamma)$.

We also introduce the {\it truncated Euler form} $\langle-,-\rangle_{\leq 1}$ on $\mod A$. For any $M,N\in \mod A$, we define
\[\langle M, N\rangle_{\leq 1}:=\dim_\C\Hom_A(M,N)-\dim_\C\Ext^1_A(M,N).
\]
It is clear that $\langle [M], [N]\rangle_a=\langle M,N\rangle_{\leq 1}-\langle N, M\rangle_{\leq 1}$.
By definition of $B_T$ and  Lemma~\ref{l:skew-symmetrizable-matrix-via-quiver}, we clearly have the following characterization of $B_T$ via the anti-symmetric Euler bilinear form $\langle-,-\rangle_a$.
\begin{proposition}~\label{p:skew-symmetrizable-matrix-via-skew-symmetric-Euler-form}
Let $T=\bigoplus_{i=1}^nT_i$ be a basic maximal rigid object of $\cc$ with $l(T_1)=n$ and $B_T=(b_{ij})\in M_n(\Z)$ the skew-symmetrizable matrix associated to $T$. Then 
\[b_{ij}=\begin{cases}~\langle S_i, S_j\rangle_a{,}& j\neq 1;\\
2\langle S_i,S_j\rangle_a{,}& j=1.\end{cases}
\]
\end{proposition}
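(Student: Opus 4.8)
The plan is to reduce the statement to a direct computation of $\langle S_i, S_j\rangle_a$ for the simple $A$-modules and then to match the outcome against the combinatorial description of $B_T$ furnished by Lemma~\ref{l:skew-symmetrizable-matrix-via-quiver}. Concretely, I would start from the explicit shape of the anti-symmetric Euler form recorded just above the statement, namely $\langle S_i,S_j\rangle_a=\langle S_i,S_j\rangle_{\leq 1}-\langle S_j,S_i\rangle_{\leq 1}$, so that $\langle S_i,S_j\rangle_a=\dim_\C\Hom_A(S_i,S_j)-\dim_\C\Ext^1_A(S_i,S_j)-\dim_\C\Hom_A(S_j,S_i)+\dim_\C\Ext^1_A(S_j,S_i)$. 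Thus the whole proof amounts to evaluating these four terms when the two arguments are $S_i$ and $S_j$.

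For the two $\Hom$-terms, Schur's lemma over $\C$ gives $\dim_\C\Hom_A(S_i,S_j)=\delta_{ij}$, since $A$ is basic and each $S_i$ is absolutely simple. For the two $\Ext^1$-terms I would invoke the standard description of extensions between simple modules over a quiver algebra: by Theorem~\ref{t:algebra-maximal-tube} we have $A\cong \C Q_T/I_T$ with $I_T\subseteq \opname{rad}^2$, so that the relations do not contribute in degree one and $\dim_\C\Ext^1_A(S_i,S_j)$ merely counts arrows between the vertices $i$ and $j$ in $Q_T$. Here I must keep track of the convention that right $A$-modules are representations of $Q_T^{op}$, which reverses the orientation; the upshot is $\dim_\C\Ext^1_A(S_i,S_j)=|\{\text{arrows }T_j\to T_i\}|$, and symmetrically $\dim_\C\Ext^1_A(S_j,S_i)=|\{\text{arrows }T_i\to T_j\}|$. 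Since $l(T_1)=n$, the loop $\rho$ sits at vertex $1$, so it only affects the self-extension $\Ext^1_A(S_1,S_1)$.

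Substituting these values finishes the argument. If $i=j$ the form is anti-symmetric, whence $\langle S_i,S_i\rangle_a=0=b_{ii}$; in particular the self-extension coming from $\rho$ never enters an off-diagonal entry. If $i\neq j$ the $\Hom$-terms drop out and $\langle S_i,S_j\rangle_a=|\{\text{arrows }T_i\to T_j\}|-|\{\text{arrows }T_j\to T_i\}|$. Comparing with Lemma~\ref{l:skew-symmetrizable-matrix-via-quiver}, this is precisely $b_{ij}$ when $j\neq 1$, while for $j=1$ that lemma carries an extra factor $2$, which is exactly the factor appearing in the statement, so $b_{i1}=2\langle S_i,S_1\rangle_a$. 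The only genuine point of care in the whole proof is fixing the orientation convention correctly, so that the signs of the arrow counts agree with those in Lemma~\ref{l:skew-symmetrizable-matrix-via-quiver} and the loop at vertex $1$ is seen to be harmless; everything else is a formal substitution, which is why the statement can fairly be described as an immediate consequence of the definition of $B_T$.
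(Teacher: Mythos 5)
Your proof is correct and follows essentially the same route as the paper, which simply asserts the proposition as an immediate consequence of the definition of $B_T$ and Lemma~\ref{l:skew-symmetrizable-matrix-via-quiver}; you merely fill in the implicit details by computing $\langle S_i,S_j\rangle_a$ via the standard facts $\dim_\C\Hom_A(S_i,S_j)=\delta_{ij}$ and $\Ext^1_A$ counting arrows (with the admissibility of $I_T$ and the $Q_T^{op}$ convention correctly accounted for). Your explicit handling of the orientation reversal for right modules and of the loop $\rho$ at vertex $1$, which only contributes to the diagonal and is killed by anti-symmetry, is exactly the bookkeeping the paper leaves to the reader.
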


\section{The main theorem}~\label{s:main-thm}

Let $T=\bigoplus_{i=1}^nT_i$ be a basic maximal rigid object of $\cc$ with $l(T_1)=n$. Let $A=\End_{\cc}(T)$ be the endomorphism algebra of $T$ and $\mod A$ the category of finitely generated right $A$-modules. Denote by $\tau_A$ the AR translation of $\mod A$.
Let $e_1,\cdots, e_n$ be the primitive idempotents of $A$ corresponding to $T_1,\cdots, T_n$ respectively. Denote by $I_i$ the indecomposable injective $A$-module associated to $e_i$.  Recall that we have an equivalence $F:\opname{pr}T/\add \Sigma T\to \mod A$.
\subsection{Locally free modules }

An $A$-module $M$ is {\it locally free} if $Me_i$ is free as a right $e_iAe_i$-module for each $i=1,\cdots, n$.  It is known that the subcategory of locally free $A$-modules is closed under extensions.
For a locally free $A$-module $M$, the integer vector
\[\rankv M=(r(Me_1),\cdots, r(Me_n))^{\opname{tr}}\in \Z^n
\]
is called the {\it rank vector} of $M$, where $r(Me_i)$ is the rank of $Me_i$ as a free $e_iAe_i$-module.

\begin{remark}~\label{r:locally-free}
According to Theorem~\ref{t:algebra-maximal-tube} and the assumption that $l(T_1)=n$. We have \[e_iAe_i\cong\End_\cc(T_i)\cong\begin{cases} \C[x]/(x^2)& i=1;\\ \C&\text{else.}\end{cases}\] Consequently, an $A$-module $M$ is locally free if and only if $Me_1$ is a free $e_1Ae_1$-module.
Moreover, if $M$ is a locally free $A$-module with dimension vector $\dimv M=(m_1, m_2, \cdots, m_n)^{\opname{tr}}$, then $\rankv M=(\frac{m_1}{2}, m_2, \cdots, m_n)^{\opname{tr}}$. By Lemma~\ref{l:dimension-vector-loop}, we have either $[\rankv M:1]=0$ or $[\rankv M:1]=1$ for an indecomposable locally free $A$-module $M$.
\end{remark}

\begin{lemma}~\label{l:rigid-locally-free}
 Each indecomposable $\tau_A$-rigid $A$-module is locally free.
\end{lemma}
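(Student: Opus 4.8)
The plan is to transport the question to the cluster tube and reduce it to a single nonvanishing statement in $\ct$. By Proposition~\ref{p:tau-rigid} an indecomposable $\tau_A$-rigid module has the form $M=FX=\Hom_\cc(T,X)$ for an indecomposable rigid $X\in\cc\setminus\add\Sigma T$, and by Lemma~\ref{l:basic-property-cluster-tube}(1) such an $X$ has length $l(X)\le n$. By Remark~\ref{r:locally-free}, $M$ is locally free if and only if $Me_1=\Hom_\cc(T_1,X)$ is free over $e_1Ae_1=\End_\cc(T_1)\cong\C[x]/(x^2)$, the class of the unique loop $\rho$ playing the role of $x$. Since $F$ is an equivalence on $\opname{pr}T/\add\Sigma T$, the module $FX$ is indecomposable, so Lemma~\ref{l:dimension-vector-loop} gives $\dim_\C\Hom_\cc(T_1,X)=[\dimv M:1]\in\{0,2\}$. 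The case $0$ is immediate (a rank-zero free module), so I would focus on $\dim_\C\Hom_\cc(T_1,X)=2$.

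In that case, since $l(T_1)=n$, each of the two relevant $\Hom$-spaces in the tube is at most one-dimensional because $l(X)\le n$, so in the decomposition $\Hom_\cc(T_1,X)=\Hom_\ct(T_1,X)\oplus D\Hom_\ct(X,\tau^2T_1)$ of Lemma~\ref{l:compute-morphism-cluster-tube} the total being $2$ forces both summands to be one-dimensional; equivalently $X\in T_1^\sqsubset\cap{}^\sqsupset\tau^2T_1$, as in Lemma~\ref{l:morphism-non-vanish}. Here the first summand is the $\ct$-part and the second the $\der$-part. Because $\rho$ is a $\der$-morphism, precomposition by $\rho$ annihilates the $\der$-part and maps the $\ct$-part into it; as $\rho^2=0$, freeness of the two-dimensional module $Me_1$ is then equivalent to $f\rho\ne0$ for a generator $f$ of $\Hom_\ct(T_1,X)$. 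Now $\rho$ spans the $\der$-part $D\Hom_\ct(T_1,\tau^2T_1)$ of $\End_\cc(T_1)$, that is, it is the nonzero functional on the one-dimensional space $\Hom_\ct(T_1,\tau^2T_1)$. Since the Auslander-Reiten duality pairing is compatible with composition, one has $\langle f\rho,g\rangle=\langle\rho,g\circ f\rangle$ for $g\in\Hom_\ct(X,\tau^2T_1)$, whence $f\rho\ne0$ if and only if the composite $g\circ f\in\Hom_\ct(T_1,\tau^2T_1)$ is nonzero for the generator $g$ of $\Hom_\ct(X,\tau^2T_1)$. Thus the whole lemma reduces to showing $g\circ f\ne0$ in the tube.

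This final nonvanishing is the heart of the matter and is exactly where rigidity enters. Writing $T_1=(a,n)$ and $X=(c,d)$ with $d=l(X)\le n$, uniseriality of the tube shows that $\im f$ is the length-$k$ submodule of $X$ with $a+n-k\equiv c\pmod{n+1}$, that $\ker g$ is the length-$(d-j)$ submodule of $X$ with $c+d-j\equiv a-2\pmod{n+1}$, and that $g\circ f\ne0$ exactly when $\im f\not\subseteq\ker g$, i.e. $k+j>d$. Adding the two congruences gives $k+j\equiv d+1\pmod{n+1}$; since $1\le k,j\le d$ we have $2\le k+j\le 2d$, and because $d\le n$ the next larger representative $d+n+2$ already exceeds $2d$, forcing $k+j=d+1>d$. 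Hence $g\circ f\ne0$, so $Me_1$ is free and $M$ is locally free. The main obstacle is this combinatorial step together with the duality translation preceding it: one must bookkeep the ray/coray indices in $\ct$ carefully and identify $\rho$ with the generator of $\Hom_\ct(T_1,\tau^2T_1)$, after which the bound $l(X)\le n$ does the decisive work of discarding the second solution of the congruence.
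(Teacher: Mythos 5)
Your proof is correct, and its skeleton coincides with the paper's: reduce via Proposition~\ref{p:tau-rigid} and Remark~\ref{r:locally-free} to freeness of $\Hom_\cc(T_1,X)$ over $\End_\cc(T_1)\cong\C[x]/(x^2)$, dispose of the zero case, and in the two-dimensional case use naturality of Auslander--Reiten duality to translate freeness, i.e.\ $f\rho\neq 0$, into nonvanishing of the composition pairing $\Hom_\ct(X,\tau^2T_1)\times\Hom_\ct(T_1,X)\to\Hom_\ct(T_1,\tau^2T_1)$ (your identity $\langle f\rho,g\rangle=\langle\rho,g\circ f\rangle$ is exactly the commutative diagram displayed in the paper's proof). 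The genuine divergence is at the decisive final step: the paper outsources the nonvanishing to Lemma~2.11(iii) of~\cite{FGL1}, which asserts that composition with a nonzero $f\in\Hom_\ct(T_1,X)$ induces an isomorphism $\Hom_\ct(X,\tau^2T_1)\cong\Hom_\ct(T_1,\tau^2T_1)$, whereas you prove $g\circ f\neq 0$ from scratch by uniseriality of the tube: the image of $f$ and the kernel of $g$ are submodules of $X$ of lengths $k$ and $d-j$ pinned down by congruences mod $n+1$, and summing the congruences gives $k+j\equiv d+1 \pmod{n+1}$, which the bounds $1\le k,j\le d\le n$ force to $k+j=d+1>d$. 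I checked the socle/top bookkeeping and the congruence count; they are right, so your argument stands and has the advantage of being self-contained (no appeal to the companion paper), at the cost of explicit ray/coray combinatorics. A minor presentational difference: you obtain the dichotomy $\dim_\C\Hom_\cc(T_1,X)\in\{0,2\}$ from the string-module Lemma~\ref{l:dimension-vector-loop}, while the paper gets it from wing geometry ($X\in\cw_{\tau T_1}$ versus $X\in T_1^{\sqsubset}\cap{}^{\sqsupset}\tau^2T_1$, Lemma~\ref{l:morphism-non-vanish}); both routes are valid.
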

\begin{proof}
By Proposition~\ref{p:tau-rigid}, each indecomposable $\tau_A$-rigid $A$-module is isomorphic to $F(X)$ for some indecomposable rigid object $X\in \cc\backslash\add\Sigma T$.
 According to Remark~\ref{r:locally-free}, it suffices to show that $F(X)e_1=\Hom_\cc(T_1,X)$ is a free $\End_\cc(T_1)$-module. Each indecomposable rigid object $X$ lies either in $\cw_{\tau T_1}$ or $T_1^{\sqsubset}\cap ^{\sqsupset}\tau^2T_1$.
If $X\in \cw_{\tau T_1}$, then $\Hom_\cc(T_1, X)=0$ and hence $F(X)$ is a locally free $A$-module. 

Now assume that $X\in T_1^{\sqsubset}\cap ^{\sqsupset}\tau^2T_1$.  Recall that we have \[\Hom_\cc(T_1, X)=\Hom_{\der^b(\ct)}(T_1, X)\oplus \Hom_{\der^b(\ct)}(T_1, \tau^{-1}\Sigma X).\] In this case,  we have \[\dim_\C\Hom_{\der^b(\ct)}(T_1, X)=1=\dim_\C\Hom_{\der^b(\ct)}(T_1, \tau^{-1}\Sigma X).\] 
To show that $\Hom_\cc(T_1,X)$ is a free $\End_\cc(T_1)$-module, it suffices to show that each non-zero morphism in $\Hom_{\der^b(\ct)}(T_1, \tau^{-1}\Sigma X)$ factors through a morphism in $\Hom_{\der^b(\ct)}(T_1,\tau^{-1}\Sigma T_1)$.
Let $f\in \Hom_\ct(T_1, X)$ be a non-zero morphism.
By Lemma~2.11(iii) of~\cite{FGL1}, $f$ induces an isomorphism \[\Hom_\ct(X, \tau^2 T_1)\cong \Hom_\ct(T_1, \tau^2 T_1)\] which fits into the following commutative diagram
\[\xymatrix{\Hom_\ct(X, \tau^2 T_1)\ar[d]^{\cong}\ar[rr]^{\Hom_\ct(f, \tau^2T_1)}&&\Hom_\ct(T_1, \tau^2 T_1)\ar[d]^{\cong}\\
D\Hom_{\der^b(\ct)}(\tau T_1, \Sigma X)\ar[rr]^{D\Hom_{\der^b(\ct)}(\tau T_1, \Sigma f)}&&D\Hom_{\der^b(\ct)}(\tau T_1, \Sigma T_1).
}
\]
Consequently, we have an isomorphism \[\Hom_{\der^b(\ct)}(T_1, \tau^{-1}\Sigma T_1)\xrightarrow{\Hom_{\der^b(\ct)}(T_1, \tau^{-1}\Sigma f)}\Hom_{\der^b(\ct)}(T_1, \tau^{-1}\Sigma X),\]
which implies the desired result.
\end{proof}

\begin{lemma}~\label{l:comparision-dimension}
Assume moreover that $T_1=(1,n)$.
For each $1\leq i<n$, the module $F((i,n+1))$ is locally free with
\[\dimv F((i, n+1))=\dimv F((i+1, n-1)).\] Consequently,  if 
$F((i,n+1))=M(a_1\cdots a_s \rho a_{s+1}\cdots a_r)$, then $F((i+1, n-1))=M(a_1\cdots a_s \rho^{-1} a_{s+1}\cdots a_r)$, where $a_1,\cdots, a_{r}$ are (formal) arrows of the quiver $Q_T$ and $\rho$ is the unique loop.
\end{lemma}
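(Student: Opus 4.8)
The plan is to establish the three assertions in turn: local freeness of $F((i,n+1))$, the equality of dimension vectors, and the resulting string description. Throughout I would work in the standard tube $\ct$, so that every morphism space can be read off combinatorially from the socle/length data, and use Lemma~\ref{l:compute-morphism-cluster-tube} to pass between $\cc$ and $\ct$.

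For local freeness, by Remark~\ref{r:locally-free} it suffices to show that $F((i,n+1))e_1 = \Hom_\cc(T_1,(i,n+1))$ is free over $e_1Ae_1\cong\C[x]/(x^2)$. First I would check, using the combinatorial description of Hom-spaces in the tube, that $\Hom_\ct(T_1,(i,n+1))\neq 0$ and $\Hom_\ct((i,n+1),\tau^2T_1)\neq 0$ for $1\le i<n$; here $T_1=(1,n)$ gives $\tau^2T_1=(n,n)$, and both spaces turn out to be one-dimensional, so $(i,n+1)\in T_1^\sqsubset\cap{}^\sqsupset\tau^2T_1$ and $\dim_\C\Hom_\cc(T_1,(i,n+1))=2$ by Lemma~\ref{l:compute-morphism-cluster-tube}. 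It then remains to show that the loop $\rho$, which is the nonzero $\der$-morphism generating the radical of $\End_\cc(T_1)$, acts with rank one, i.e. that precomposition with $\rho$ carries a nonzero $\ct$-morphism $T_1\to(i,n+1)$ to a nonzero $\der$-morphism. This is precisely the factorization statement established in the proof of Lemma~\ref{l:rigid-locally-free}: the Auslander-Reiten duality square together with Lemma~2.11(iii) of~\cite{FGL1} shows the relevant map of one-dimensional spaces is an isomorphism. Hence $\Hom_\cc(T_1,(i,n+1))\cong\C[x]/(x^2)$ and $F((i,n+1))$ is locally free.

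For the equality of dimension vectors I would compute, for each summand $T_j=(a,b)\in\cw_{T_1}$, the two integers $\dim\Hom_\cc(T_j,(i,n+1))$ and $\dim\Hom_\cc(T_j,(i+1,n-1))$. Expanding both by Lemma~\ref{l:compute-morphism-cluster-tube} produces four tube Hom-dimensions, each $0$ or $1$ and governed by the usual socle/length ranges. The two $\ct$-morphism contributions $\dim\Hom_\ct(T_j,(i,n+1))$ and $\dim\Hom_\ct(T_j,(i+1,n-1))$ have supports that are the same interval shifted by one (target socle $i$ versus $i+1$), so their difference is supported exactly at the endpoints $i=a-1$ and $i=a+b-1$; a short calculation shows that the excess of the $\der$-morphism term $\dim\Hom_\ct((i+1,n-1),\tau^2T_j)$ over $\dim\Hom_\ct((i,n+1),\tau^2T_j)$ is supported at those same two values of $i$ and cancels the $\ct$-discrepancy, so the two totals coincide for every $j$. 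The conceptual backdrop is the Auslander-Reiten sequence $0\to(i,n)\to(i,n+1)\oplus(i+1,n-1)\to(i+1,n)\to0$ of $\ct$, which exhibits $(i,n+1)$ and $(i+1,n-1)$ as the two middle summands; I would keep it in view as a check on the bookkeeping, with special care for the wrap-around at length $n+1$ and the boundary summands with $a=1$ or $a+b=n+1$.

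Finally, for the string description: since $A$ is a representation-finite gentle algebra, both $F((i,n+1))$ and $F((i+1,n-1))$ are string modules. The module $F((i+1,n-1))$ is $\tau_A$-rigid and hence locally free by Lemma~\ref{l:rigid-locally-free}, and by the previous paragraph it shares the dimension vector of $F((i,n+1))$; thus $[\dimv F((i,n+1)):1]=[\dimv F((i+1,n-1)):1]=2$, so by Lemma~\ref{l:dimension-vector-loop} and the construction of string modules each string contains the loop letter $\rho^{\pm1}$ exactly once. Writing $F((i,n+1))=M(a_1\cdots a_s\,\rho\,a_{s+1}\cdots a_r)$, the common dimension vector fixes the multiset of vertices visited, and since both modules are thin (dimension at most one) at every vertex other than $1$, the two arms $a_1\cdots a_s$ and $a_{s+1}\cdots a_r$ attached to the loop vertex are determined; the only remaining freedom is the orientation of the loop letter. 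As $F((i+1,n-1))$ is $\tau_A$-rigid while $(i,n+1)$ is not rigid, the two modules are non-isomorphic, so the loop orientations must differ and $F((i+1,n-1))=M(a_1\cdots a_s\,\rho^{-1}\,a_{s+1}\cdots a_r)$. I expect this last step to be the main obstacle: upgrading ``equal dimension vector plus one loop letter'' to ``identical strings up to loop orientation'' requires genuinely controlling the underlying walk and not merely its support, and the cleanest route may instead be to read both strings off directly from the positions of $(i,n+1)$ and $(i+1,n-1)$ in the Auslander-Reiten quiver.
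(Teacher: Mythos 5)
Your first two steps track the paper's own proof closely and are sound. Local freeness of $F((i,n+1))$ is obtained exactly as in the paper: one notes $\dim_\C\Hom_{\der^b(\ct)}(T_1,(i,n+1))=1=\dim_\C\Hom_{\der^b(\ct)}(T_1,\tau^{-1}\Sigma (i,n+1))$ and repeats the factorization argument of Lemma~\ref{l:rigid-locally-free}. The equality of dimension vectors is, in both your argument and the paper's, a direct comparison of $\dim_\C\Hom_\cc(X,(i,n+1))$ and $\dim_\C\Hom_\cc(X,(i+1,n-1))$ for indecomposable $X\in\cw_{(1,n)}$; the paper organizes this as a partition of $\cw_{(1,n)}$ into five regions $\cs_1,\dots,\cs_5$ on which the common value is $2$, $1$, $0$, while you organize it as a cancellation between the $\ct$-morphism and $\der$-morphism contributions. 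That is the same computation with different bookkeeping.

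The gap is in the last step, and it is exactly where you suspected. Your claim that both modules are ``thin (dimension at most one) at every vertex other than $1$'' is false, and with it falls the deduction that the common dimension vector determines the two arms of the string. The dimension of $F((i,n+1))$ at the vertex $j$ is $\dim_\C\Hom_\cc(T_j,(i,n+1))$, which equals $2$ for every summand $T_j\in\cs_1=\cw_{(1,n)}\setminus(\cw_{(1,i)}\cup\cw_{(i+1,n-i)})$, and this set in general contains summands of $T$ other than $T_1$ (take $T=(1,1)\oplus\cdots\oplus(1,n)$ and $i=1$, say). The paper's own final example (Figure~1, for the algebra of Example~\ref{ex:string-module}) shows this concretely: the $\tau_A$-rigid modules with rank vectors $(1,0,2)$ and $(1,2,0)$ have dimension vectors $(2,0,2)$ and $(2,2,0)$, i.e.\ dimension $2$ at a non-loop vertex, and such modules occur as $F((i+1,n-1))$, hence share dimension vectors with the corresponding $F((i,n+1))$. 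For such modules the walk revisits a non-loop vertex, so knowing the dimension vector does not pin down the walk, and your identification of the arms collapses. Your non-isomorphism observation (that $F((i+1,n-1))$ is $\tau_A$-rigid while $F((i,n+1))$ cannot be) is correct, but it only distinguishes the two modules; it does not produce the claimed relation between their strings. The paper disposes of this step not via thinness but by appealing to the description of $\mathcal{Q}_n$ in Section~\ref{ss: endomorphism-quiver}: the loop vertex carries at most one non-loop arrow in and one out, $\rho$ occurs at most once in any string, and the relations never involve $\rho$ together with a non-loop arrow, which is what constrains the two strings to differ only in the orientation of the loop letter. So your proposed repair --- controlling the actual walk through the loop vertex rather than its support, or reading both strings off the AR quiver --- is the right one, but as written the proof of the final statement is not complete.
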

\begin{proof}
Since $\ct$ is standard, we have 
\[\dim_\C\Hom_{\der^b(\ct)}(T_1, (i,n+1))=1=\dim_\C\Hom_{\der^b(\ct)}(T_1,\tau^{-1}\Sigma (i,n+1)).
\]
Now similar to Lemma~\ref{l:rigid-locally-free}, one can show that $F((i,n+1))$ is locally free with $[\dimv F((i,n+1)):1]=2$.

According to Lemma~\ref{l:basic-property-cluster-tube} (2), 
each indecomposable direct summand $T_i$ belongs to $\cw_{(1,n)}$.
In order to prove $\dimv F((i, n+1))=\dimv F((i+1, n-1))$, it suffices to show that  for any indecomposable object $X\in \cw_{(1,n)}$, we have 
\[\dim_\C\Hom_{\cc}(X, (i,n+1))=\dim_\C\Hom_{\cc}(X, (i+1, n-1)).
\]
The set $\cw_{(1,n)}$ can be divided into the following  five disjoint subsets:
 $\cs_1:=\cw_{(1,n)}\backslash (\cw_{(1,i)}\cup \cw_{(i+1,n-i)})$, $\cs_2=:~^{\sqsupset}(i,1)\cap \cw_{(1,n)}$, $\cs_3:=(i+1,1)^{\sqsubset}\cap \cw_{(1,n)}$, $\cs_4:=\cw_{(1,i-1)}, \cs_5:=\cw_{(i+2, n-i-1)}$.
A direct computation shows that
\begin{itemize}
\item for any $X\in \cs_1$, $\dim_\C\Hom_{\cc}(X, (i, n+1))=2=\dim_\C\Hom_{\cc}(X, (i+1, n-1))$;
\item  for any $X\in \cs_2\cup \cs_3$, $\dim_\C\Hom_{\cc}(X, (i, n+1))=1=\dim_\C\Hom_{\cc}(X, (i+1, n-1))$;
\item for each $X\in \cs_4\cup \cs_5$, $\dim_\C\Hom_{\cc}(X, (i, n+1))=0=\dim_\C\Hom_{\cc}(X, (i+1, n-1))$.
\end{itemize}
The remain statement  follows from the description of $\mathcal{Q}_n$ in Section~\ref{ss: endomorphism-quiver}.
\end{proof}

\subsection{Grassmannians of locally free modules}

For an integer vector $\underline{e}=(a_1, a_2, \cdots, a_n)^{\opname{tr}}\in \N^n$, we set $\hat{\underline{e}}=(2a_1, a_2, \cdots, a_n)^{\opname{tr}}\in \N^n$.
For each locally free $A$-module $M$ and $\underline{e}\in \N^n$, we consider the following quasi-projective variety
\[\opname{Gr}_{\underline{e}}^{lf}(M):=\{N\subseteq M~|~\text{$N$ is a locally free submodule of $M$ with $\rankv N=\underline{e}$}\}.
\] 
It is clear that $\opname{Gr}_{\underline{e}}^{lf}(M)$~is a subvariety of 
\[\opname{Gr}_{\underline{\hat{e}}}(M):=\{N\subseteq M~|~\text{$N$ is a submodule of $M$ with $\dimv N=\underline{\hat{e}}$}\}.
\]
Moreover, if $M$ is indecomposable, then $\opname{Gr}_{\underline{e}}^{lf}(M)=\opname{Gr}_{\underline{\hat{e}}}(M)$ by Remark~\ref{r:locally-free}. We denote $\chi(\opname{Gr}_{\underline{e}}^{lf}(M))$ the Euler-Poincar\'{e} characteristic.  
The following  result has been established in~\cite{GLS17}.
\begin{lemma}~\label{l:multiplicativity-CC}
Let $M$ and $N$ be locally free $A$-modules. For $\underline{g}\in \N^n$, we have
\[\chi(\opname{Gr}_{\underline{g}}^{lf}(M\oplus N))=\sum_{\underline{e}+\underline{f}=\underline{g}}\chi(\opname{Gr}_{\underline{e}}^{lf}(M))\chi(\opname{Gr}_{\underline{f}}^{lf}(N)).
\]
\end{lemma}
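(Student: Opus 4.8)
The plan is to establish the multiplicativity of the Euler characteristic for Grassmannians of locally free submodules by exhibiting a well-behaved geometric decomposition of $\opname{Gr}_{\underline{g}}^{lf}(M\oplus N)$. The natural approach follows the classical argument of Caldero--Chapoton: consider the projection that sends a submodule $L\subseteq M\oplus N$ to its ``shadow'' on $N$, namely $p_N(L)\subseteq N$ together with $L\cap M\subseteq M$. More precisely, I would define a morphism of varieties
\[
\pi\colon \opname{Gr}_{\underline{g}}^{lf}(M\oplus N)\longrightarrow \coprod_{\underline{e}+\underline{f}=\underline{g}}\opname{Gr}_{\underline{e}}^{lf}(M)\times \opname{Gr}_{\underline{f}}^{lf}(N),
\]
sending $L$ to the pair $(L\cap M,\ p_N(L))$, where $p_N\colon M\oplus N\to N$ is the projection. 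First I would verify that each fibre of $\pi$ over a pair $(L_M,L_N)$ is an affine space, so that its Euler characteristic is $1$; the additivity of $\chi$ over the stratification by the discrete invariant $(\underline e,\underline f)$ then yields the stated formula. The fibre is an affine space because a submodule $L$ with $L\cap M=L_M$ and $p_N(L)=L_N$ is precisely the graph of an $A$-module homomorphism $L_N\to M/L_M$ lifting along $M\to M/L_M$, so the fibre is a torsor over $\Hom_A(L_N,M/L_M)$, which is a vector space.

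The essential obstruction, flagged already in the introduction of the paper, is that the intersection of locally free submodules need not be locally free, so the map $\pi$ does not obviously land in the correct target: for $L$ locally free of rank vector $\underline g$, it is not automatic that $L\cap M$ and $p_N(L)$ are themselves locally free, nor that their rank vectors add up to $\underline g$. The hard part will therefore be to control local freeness along the projection. By Remark~\ref{r:locally-free}, being locally free is detected entirely at the loop vertex: $L$ is locally free iff $Le_1$ is free over $e_1Ae_1\cong\C[x]/(x^2)$, equivalently iff $\dim_\C Le_1$ is even with the nilpotent $\rho$-action having maximal rank. The key step is thus to show that when $L$ is locally free, the decomposition $Le_1 = (L\cap M)e_1 \oplus \text{(image in }Ne_1)$ interacts compatibly with the free $\C[x]/(x^2)$-structure. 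I expect this to require the $\C^\times$-action of Lemma~\ref{l:c-action}: one can use the torus action to degenerate an arbitrary locally free submodule to one split compatibly with the $M\oplus N$ decomposition, reducing the freeness bookkeeping at vertex $1$ to the fixed-point locus where the direct-sum structure is respected.

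Concretely, I would proceed as follows. Since $\chi$ is invariant under the $\C^\times$-action and computes via fixed points, I would first reduce to analyzing $\chi(\opname{Gr}_{\underline g}^{lf}(M\oplus N))$ on the subvariety of submodules compatible with the fixed-point structure, where local freeness decouples across the summands. Then I would stratify $\opname{Gr}_{\underline{g}}^{lf}(M\oplus N)$ by the pair of rank vectors $(\rankv(L\cap M),\ \rankv\, p_N(L))$; on each stratum the map $\pi$ is a morphism to $\opname{Gr}_{\underline{e}}^{lf}(M)\times\opname{Gr}_{\underline{f}}^{lf}(N)$ with affine-space fibres as described, forcing $\underline e+\underline f=\underline g$ by additivity of rank on locally free pieces. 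Applying the fibration formula $\chi(\text{total})=\chi(\text{base})$ for affine-bundle-like maps (each fibre contractible, hence $\chi=1$) on each stratum, and summing over the finitely many compatible pairs $(\underline e,\underline f)$ with $\underline e+\underline f=\underline g$, gives
\[
\chi(\opname{Gr}_{\underline g}^{lf}(M\oplus N)) \;=\; \sum_{\underline e+\underline f=\underline g}\chi(\opname{Gr}_{\underline e}^{lf}(M))\,\chi(\opname{Gr}_{\underline f}^{lf}(N)),
\]
as desired. The delicate point to nail down rigorously remains the claim that the strata with $L\cap M$ or $p_N(L)$ failing to be locally free contribute nothing to $\chi$, which is exactly where the torus-action reduction and the even-rank criterion at the loop vertex do the work.
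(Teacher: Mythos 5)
Your strategy can be made to work, but as written it has a genuine gap, and it is exactly the one you flag in your own last sentence: you never prove that the locus of locally free submodules $L\subseteq M\oplus N$ for which $L\cap M$ or $p_N(L)$ fails to be locally free contributes zero to $\chi$. Moreover, the tool you propose for this step is not the right one. The $\C^{\times}$-action of Lemma~\ref{l:c-action} is built from the decomposition of morphisms in $\cc$ into $\ct$-morphisms and $\der$-morphisms; it has nothing to do with the direct-sum decomposition $M\oplus N$, its fixed points are not the ``split'' submodules, and there is no evident sense in which it ``degenerates'' a submodule to one compatible with the splitting (the limit $c\to 0$ is not even defined on $\opname{Gr}_{\underline{g}}^{lf}$, since $F(h_{U,0})$ need not be injective). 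In the paper that action serves a different purpose: it handles the \emph{non-split} AR-sequence in Proposition~\ref{p:equality-Euler-char}, where no action adapted to a splitting exists. Note also that the paper does not prove Lemma~\ref{l:multiplicativity-CC} at all; it quotes it from~\cite{GLS17}, so any comparison is with that reference rather than with an argument in the text.

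The action you actually need is the elementary one adapted to the splitting: let $t\in\C^{\times}$ act on $M\oplus N$ by the $A$-module automorphism $\phi_t=\operatorname{diag}(\operatorname{id}_M,\,t\operatorname{id}_N)$, hence on submodules by $L\mapsto \phi_t(L)$. Since each $\phi_t$ is a module automorphism, the action preserves local freeness and rank vectors, so it restricts to an algebraic $\C^{\times}$-action on $\opname{Gr}_{\underline{g}}^{lf}(M\oplus N)$. Its fixed points are exactly the submodules of the form $L=L_1\oplus L_2$ with $L_1\subseteq M$, $L_2\subseteq N$: if $(m,n)\in L$ then $(0,(t-1)n)=\phi_t(m,n)-(m,n)\in L$, so $(0,n)\in L$ and $(m,0)\in L$. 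For such a fixed point, $L_1$ and $L_2$ are \emph{automatically} locally free, because $L_1e_1\oplus L_2e_1=Le_1$ is free over $e_1Ae_1\cong\C[x]/(x^2)$ and a finitely generated direct summand of a free module over a local ring is free (this is the even-rank bookkeeping at the loop vertex you were worried about, and it is where Remark~\ref{r:locally-free} enters). Hence the fixed locus is precisely $\coprod_{\underline{e}+\underline{f}=\underline{g}}\opname{Gr}_{\underline{e}}^{lf}(M)\times\opname{Gr}_{\underline{f}}^{lf}(N)$, and the standard fact that $\chi(X)=\chi(X^{\C^{\times}})$ for an algebraic $\C^{\times}$-action on a complex quasi-projective variety finishes the proof in one stroke. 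This observation also repairs your argument exactly where it is incomplete: your ``bad'' strata are stable under $\phi_t$ (one checks $\phi_t(L)\cap M=L\cap M$ and $p_N(\phi_t(L))=p_N(L)$) and contain no fixed points, hence have vanishing Euler characteristic. But once the action is in place, the fibration $\pi$, its affine fibres, and the stratification become unnecessary altogether; they are needed only in the non-split situation of Propositions~\ref{p:fibre-grassmanian} and~\ref{p:equality-Euler-char}.
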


As an application of Lemma~\ref{l:comparision-dimension}, we have
\begin{corollary}\label{c:equality-cc-map}
 Assume that $T_1=(1,n)$.
 For each $\underline{e}\in \N^n$ and $1\leq i<n$, we have \[\chi(\opname{Gr}_{\underline{e}}^{lf}(F((i,n+1))))=\chi(\opname{Gr}_{\underline{e}}^{lf}(F((i+1,n-1)))).\]
\end{corollary}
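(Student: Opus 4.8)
The plan is to reduce the statement to Lemma~\ref{l:comparision-dimension} together with the $\mathbb{C}^\times$-action advertised in the introduction. The crucial input is that, by Lemma~\ref{l:comparision-dimension}, the two modules $M:=F((i,n+1))$ and $M':=F((i+1,n-1))$ are both locally free, share the same dimension vector, and differ only in the linear map attached to the unique loop $\rho$: if $M=M(a_1\cdots a_s\,\rho\,a_{s+1}\cdots a_r)$, then $M'=M(a_1\cdots a_s\,\rho^{-1}\,a_{s+1}\cdots a_r)$. In other words, $M$ and $M'$ are two string modules whose underlying representations agree on every arrow except $\rho$, at which one has the matrix $\left[\begin{smallmatrix}0&1\\0&0\end{smallmatrix}\right]$ and the other $\left[\begin{smallmatrix}0&0\\1&0\end{smallmatrix}\right]$ on the relevant two-dimensional space $Me_1$ (compare Example~\ref{ex:string-module}). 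Since both modules are indecomposable, Remark~\ref{r:locally-free} gives $\opname{Gr}_{\underline{e}}^{lf}(M)=\opname{Gr}_{\underline{\hat e}}(M)$ and likewise for $M'$, so it suffices to prove $\chi(\opname{Gr}_{\underline{\hat e}}(M))=\chi(\opname{Gr}_{\underline{\hat e}}(M'))$ for every $\underline{e}\in\N^n$.

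The approach I would take is to exhibit an explicit algebraic isomorphism of varieties $\opname{Gr}_{\underline{\hat e}}(M)\cong\opname{Gr}_{\underline{\hat e}}(M')$, whence the Euler characteristics coincide. The natural candidate is the linear automorphism of the underlying vector space $M$ that swaps the two basis vectors $z_s, z_{s+1}$ spanning $Me_1$ (the vertex carrying $\rho$) and fixes all remaining basis vectors. On the space at vertex $1$ this is the involution $\left[\begin{smallmatrix}0&1\\1&0\end{smallmatrix}\right]$, which conjugates $\left[\begin{smallmatrix}0&1\\0&0\end{smallmatrix}\right]$ into $\left[\begin{smallmatrix}0&0\\1&0\end{smallmatrix}\right]$, thereby carrying the $\rho$-action of $M$ to that of $M'$. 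First I would check that this linear map intertwines the actions of every other arrow: by the string description, the arrows $a_j$ for $j\neq s, s+1$ connect basis vectors other than the swapped pair in an identical way for both $M$ and $M'$, and the arrows adjacent to $\rho$ send or receive $z_s$ and $z_{s+1}$ compatibly once they are transposed. Granting this, the map is an isomorphism of $A$-modules $\Phi:M\iso M'$, and $N\mapsto\Phi(N)$ is an isomorphism of Grassmannian varieties preserving dimension vectors, which immediately yields the claim.

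The main obstacle I anticipate is establishing the intertwining property at the two arrows flanking $\rho$, namely $a_s$ (ending at vertex $1$) and $a_{s+1}$ (starting at vertex $1$). Here the precise bookkeeping of the string-module construction in Section~\ref{ss: endomorphism-quiver} matters: one must confirm that transposing $z_s\leftrightarrow z_{s+1}$ is exactly compatible with how $a_s$ and $a_{s+1}$ act in $M$ versus $M'$. Because $M$ and $M'$ arise from strings differing only by replacing $\rho$ with $\rho^{-1}$, the roles of the two middle basis vectors in the linear maps for $a_s$ and $a_{s+1}$ are interchanged accordingly, so the transposition is forced to be the correct intertwiner; verifying this cleanly is the only genuinely delicate point, and it is where I would spend most of the effort.

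If a direct isomorphism proves awkward to write down, I would fall back on the $\mathbb{C}^\times$-action of Lemma~\ref{l:c-action}: rescaling the loop action by $t\in\mathbb{C}^\times$ on $Me_1$ deforms $M$ through a family of modules, and comparing the fixed-point loci of this action on the two Grassmannians via the additivity in Lemma~\ref{l:multiplicativity-CC} would give equality of Euler characteristics even without an explicit bijection. Either route reduces the corollary to the already-proven Lemma~\ref{l:comparision-dimension}, so no new structural input beyond the coincidence of dimension vectors and the loop-swap is required.
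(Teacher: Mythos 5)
There is a genuine gap, and it sits exactly at the point you flagged as ``delicate'': the modules $M=F((i,n+1))$ and $M'=F((i+1,n-1))$ are \emph{not} isomorphic as $A$-modules, so no intertwiner $\Phi\colon M\iso M'$ --- basis swap or otherwise --- can exist. Indeed, $F$ induces an equivalence $\opname{pr}T/\add\Sigma T\to\mod A$, both objects are indecomposable and neither lies in $\add\Sigma T$ (for $(i+1,n-1)\in\add\Sigma T$ one would need $(i+2,n-1)\in\cw_{(1,n)}$, forcing $i\leq 0$), and $(i,n+1)\not\cong(i+1,n-1)$ in $\cc$ since their lengths differ; alternatively, string modules satisfy $M(v)\cong M(w)$ only for $v\in\{w,w^{-1}\}$, and $a_1\cdots a_s\rho^{-1}a_{s+1}\cdots a_r$ is neither $a_1\cdots a_s\rho a_{s+1}\cdots a_r$ nor its inverse. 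Concretely, the two representations share the \emph{same} maps $f_{21}\colon V_1\to\C^2$ and $f_{13}\colon\C^2\to V_3$ and differ only in the loop, so your swap $\sigma=\left[\begin{smallmatrix}0&1\\1&0\end{smallmatrix}\right]$ would have to satisfy $\sigma f_{21}=f_{21}$ and $f_{13}\sigma=f_{13}$; but in the string construction $f_{21}$ hits exactly one of the two basis vectors at vertex $1$ (compare Example~\ref{ex:string-module}), so $\sigma f_{21}\neq f_{21}$ and the intertwining fails. Your fallback is also not viable as stated: the action of Lemma~\ref{l:c-action} lives on $\opname{Gr}^{lf}_{\underline{g}}(M)$ for a \emph{fixed} $M$ and does not deform $M$ into $M'$ (rescaling the loop by $t\neq 0$ stays in the isomorphism class of $M$, and the limit $t=0$ is not $M'$), while Lemma~\ref{l:multiplicativity-CC} concerns direct sums, not fixed-point loci.

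The correct mechanism --- and the paper's actual (one-line) argument --- needs no isomorphism of modules, only of the locally free submodule varieties, and this is where locally-freeness does all the work. Since $Me_1$ is free of rank one over $e_1Ae_1\cong\C[x]/(x^2)$, the only \emph{free} $e_1Ae_1$-submodules of $Me_1$ are $0$ and $Me_1$ itself; hence every point $N\in\opname{Gr}^{lf}_{\underline{e}}(M)$ has $Ne_1\in\{0,Me_1\}$, and both of these subspaces are invariant under \emph{any} loop matrix. As $M$ and $M'$ have identical structure maps away from $\rho$, the closure conditions cutting out $\opname{Gr}^{lf}_{\underline{e}}(M)$ and $\opname{Gr}^{lf}_{\underline{e}}(M')$ inside the product of linear Grassmannians coincide verbatim, so the identity on subspace collections already gives $\opname{Gr}^{lf}_{\underline{e}}(M)\cong\opname{Gr}^{lf}_{\underline{e}}(M')$ and hence equality of Euler characteristics. (Had you worked with $\opname{Gr}_{\underline{\hat{e}}}$ at odd vertex-$1$ dimension, the two varieties could genuinely differ --- the kernels and images of the two transposed loop matrices interact differently with $f_{21},f_{13}$, which is precisely why $M\not\cong M'$; the point is that such dimensions never occur for $\underline{\hat{e}}$.) So your setup via Lemma~\ref{l:comparision-dimension} is the right one, but the proof must be completed by this loop-invariance observation rather than by a module isomorphism.
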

\begin{proof}
By Lemma~\ref{l:comparision-dimension} and the construction of indecomposable string modules({\it cf.} Section~\ref{ss: endomorphism-quiver}), $F((i,n+1))$ and $F((i+1,n-1))$ can be written as

\[\xymatrix{\\
F((i,n+1)):\cdots \ar[r]& V_1\ar[r]^{f_{21}}&\C\oplus \C\ar@{->}@(ur,ul)_{\tiny\begin{bmatrix}0&0\\1&0\end{bmatrix}}\ar[r]^{f_{13}}&V_3\ar[r]&\cdots\\
F((i+1,n-1)):\cdots \ar[r]& V_1\ar[r]^{f_{21}}&\C\oplus \C\ar@{->}@(dl,dr)_{\tiny\begin{bmatrix}0&1\\0&0\end{bmatrix}}\ar[r]^{f_{13}}&V_3\ar[r]&\cdots.
}
\]

Now it is not hard to see that for each $\underline{e}\in \N^n$
\[\opname{Gr}_{\underline{e}}^{lf}(F((i,n+1)))\cong \opname{Gr}_{\underline{e}}^{lf}(F((i+1,n-1)))\]and hence $\chi(\opname{Gr}_{\underline{e}}^{lf}(F((i,n+1))))=\chi(\opname{Gr}_{\underline{e}}^{lf}(F((i+1,n-1)))).$
\end{proof}

\subsection{The Caldero-Chapoton map}
Let $x_1,\cdots, x_n$ be $n$ indeterminants and $\mathbb{Q}[x_1^{\pm},\cdots, x_n^{\pm}]$ the  ring of Laurent polynomials in variables $x_1,\cdots, x_n$ with coefficients in $\mathbb{Q}$. For an integer vector $\alpha=(m_1, \cdots, m_n)^{\opname{tr}}\in \Z^n$ or  an element $\alpha=m_1[T_1]+\cdots+m_n[T_n]\in \go(\add T)$, we write
\[x^\alpha:=\prod_{i=1}^nx_i^{m_i}.
\]

Let $B_T$ be the skew-symmetrizable matrix associated to $T$ ({\it cf.} Section~\ref{s: skew-symmetrizable-matrix}). Let $x_1, x_2, \ldots, x_n$ be the initial cluster variables.
 Denote by  $\ca_T:=\ca(B_T)$ the cluster algebra without coefficients associated to $T$.  For a locally free  $A$-module $M$ with a minimal injective copresentation
 \[0\to M\to \bigoplus_{i=1}^nI_i^{\oplus a_i}\to \bigoplus_{i=1}^nI_i^{\oplus b_i},
\]
we define $\mathfrak{i}(M):=(a_1-b_1,\cdots, a_n-b_n)^{\opname{tr}}\in \Z^n$. The {\it Caldero-Chapoton map} or Palu's {\it cluster character} $\mathbb{X}_M$ of $M$ (with respect to $T$) is defined as follows
\[\mathbb{X}_M=x^{-\mathfrak{i}(M)}\sum_{\underline{e}}\chi(\opname{Gr}_{\underline{e}}^{lf}(M))x^{B_T\underline{e}}\in \mathbb{Q}[x_1^{\pm},\cdots, x_n^{\pm}].
\]
The following is the main result of this paper and its proof will be postponed to  Section~\ref{s:the-proof}.
\begin{theorem}~\label{t:main-thm}
Let $T$ be a basic maximal rigid object of $\cc$ with endomorphism algebra $A$. Denote by $\mathcal{A}_T$ the cluster algebra associated to $T$. The Caldero-Chapoton map $\mathbb{X}_?$ yields a bijection between the indecomposable $\tau_A$-rigid $A$-modules and the non-initial cluster variables of $\mathcal{A}_T$.
\end{theorem}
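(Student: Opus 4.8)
We deduce the statement from the combinatorial bijection of Theorem~\ref{t:BMV-main-theorem}. By that theorem there is a bijection $X\mapsto x_X$ between the indecomposable rigid objects of $\cc$ and the cluster variables of $\mathcal{A}_T$, sending $\Sigma T$ to the initial cluster; by Proposition~\ref{p:tau-rigid} the functor $F$ identifies the indecomposable rigid objects of $\cc\setminus\add\Sigma T$ with the indecomposable $\tau_A$-rigid $A$-modules, which are locally free by Lemma~\ref{l:rigid-locally-free}, so that $\mathbb{X}_{F(X)}$ is defined; and under $X\mapsto x_X$ these modules correspond exactly to the non-initial cluster variables. It therefore suffices to prove the identity $\mathbb{X}_{F(X)}=x_X$ for every indecomposable rigid $X\in\cc\setminus\add\Sigma T$. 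Once this is shown the theorem follows, injectivity and surjectivity being inherited from the two bijections above.

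To run an induction it is convenient to extend $\mathbb{X}$ to all rigid objects by setting $\mathbb{X}_{\Sigma T_i}:=x_i$ on the initial summands and declaring $\mathbb{X}$ multiplicative on direct sums; then $\mathbb{X}_{\Sigma T_i}=x_{\Sigma T_i}$ holds by construction, so the initial cluster $\Sigma T$ serves as the base case. I would prove $\mathbb{X}_X=x_X$ for all indecomposable rigid $X$ by induction on the mutation distance from $\Sigma T$ in the exchange graph, which is finite since $\mathcal{A}_T$ is of finite type. The inductive engine is a \emph{cluster multiplication formula}: for each exchange pair $(Y,Y^*)$ of $\cc$, with exchange triangles
\[Y^*\to B\to Y\to \Sigma Y^* \qquad\text{and}\qquad Y\to B'\to Y^*\to\Sigma Y,\]
one has
\[\mathbb{X}_Y\cdot\mathbb{X}_{Y^*}=\mathbb{X}_B+\mathbb{X}_{B'}.\]
Given $\mathbb{X}_Y=x_Y$ and $\mathbb{X}_Z=x_Z$ for the summands $Z$ of $B$ and $B'$ (all closer to $\Sigma T$), the right-hand side becomes, by multiplicativity and the description of $B_T$ through the multiplicities of $B,B'$ in Section~\ref{s: skew-symmetrizable-matrix}, exactly $\prod_i x_i^{[b_{ij}]_+}+\prod_i x_i^{[-b_{ij}]_+}$; comparing with the exchange relation yields $\mathbb{X}_{Y^*}=x_{Y^*}$.

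The multiplication formula decouples into a monomial part and a Grassmannian part. The monomial prefactor $x^{-\mathfrak{i}(-)}$ must transform as prescribed by the exchange relation; this is a $g$-vector bookkeeping, carried out through the reformulation of the main theorem in terms of coindex (Theorem~\ref{t:main-thm-restate}) together with the additivity of index and coindex along the two exchange triangles, which relates $\mathfrak{i}(B),\mathfrak{i}(B')$ to $\mathfrak{i}(Y),\mathfrak{i}(Y^*)$. The Grassmannian part reduces, via the multiplicativity of $\chi$ (Lemma~\ref{l:multiplicativity-CC}), to comparing the locally free Grassmannians of $F(Y)\oplus F(Y^*)$ with those of the middle terms $F(B)$ and $F(B')$; following~\cite{CalderoChapoton,DG}, one attacks this by stratifying the Grassmannian of the middle term according to the pair of submodules it induces on $F(Y)$ and $F(Y^*)$.

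The hard part is this last Euler-characteristic comparison. In the classical argument the strata are indexed by pairs of submodules together with their intersection, but in our setting the intersection of two locally free submodules need not be locally free, so the natural stratification leaves the category of locally free modules and the relevant Grassmannians are no longer the ones occurring in the Caldero-Chapoton map. This is exactly what the two devices of Section~\ref{s:grassmanian} repair: the natural $\mathbb{C}^{\times}$-action on $\opname{Gr}_{\underline{e}}^{lf}(M)$ (Lemma~\ref{l:c-action}) reduces each Euler characteristic to that of its fixed locus and annihilates the contribution of the non-locally-free strata, while the auxiliary Grassmannian attached to the pertinent Auslander-Reiten sequence records the extra datum needed to restore local freeness and supplies the fibration matching the two sides. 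The coincidences forced by the unique loop $\rho$, namely the equalities of Lemma~\ref{l:comparision-dimension} and Corollary~\ref{c:equality-cc-map} for the length $n+1$ modules $F((i,n+1))$ that arise as middle terms, are what make the formula close up in the type $\mathrm{C}$ situation, where a length-$n$ summand extends itself.
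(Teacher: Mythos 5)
Your opening reduction --- invoking Theorem~\ref{t:BMV-main-theorem}, Proposition~\ref{p:tau-rigid} and Lemma~\ref{l:rigid-locally-free} to reduce everything to the identity $\mathbb{X}_{F(X)}=x_X$ for indecomposable rigid $X$, with $\mathbb{X}_{\Sigma T_i}:=x_i$ as base case --- matches the paper, and an induction through the exchange graph is indeed the right engine. The gap is in the engine itself: you rest the induction on a multiplication formula $\mathbb{X}_Y\mathbb{X}_{Y^*}=\mathbb{X}_B+\mathbb{X}_{B'}$ for \emph{every} exchange pair $(Y,Y^*)$ of $\cc$, and that statement is neither proved in the paper nor within reach of the tools you cite. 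The machinery of Section~\ref{s:grassmanian} is built exclusively for Auslander--Reiten sequences: Proposition~\ref{p:fibre-grassmanian} uses the almost-split property to show that every pair $(B,C)\neq(0,N)$ lifts to a submodule of the middle term with affine fibre $\Hom_A(C,L/B)$, and Lemma~\ref{l:free-action} is proved under hypotheses ($[\underline{g}:1]=1$, $\hat{N}\in(2,n)^{\sqsubset}\cap{}^{\sqsupset}(n,n)$, middle term with exactly two prescribed summands) that only make sense for the AR-triangle of a rigid indecomposable. But general exchange pairs in $\cc$ are not $\tau$-shift pairs and their exchange triangles are not AR-triangles: for example, in $\cc_4$ the almost complete maximal rigid object $(1,1)\oplus(1,3)$ has the two complements $(1,2)$ and $(3,1)$, which have different lengths and hence cannot be $\tau$-shifts of one another. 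For such a pair there is no ``pertinent Auslander--Reiten sequence'' to attach your auxiliary Grassmannian to; your own wording conflates exchange triangles with AR-triangles. Proving the formula for arbitrary exchange pairs would amount to a locally-free analogue of the Caldero--Keller/Palu multiplication theorem, a substantially harder statement that the paper deliberately avoids.

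What the paper does instead --- and what is missing from your proposal --- is a judicious choice of path rather than a blanket induction on mutation distance. It fixes the explicit ``staircase'' path $t_0-t_1-\cdots-t_{n^2}$ in $\mathbb{T}_n$ with $T_{t_i}=(a+1,1)\oplus\cdots\oplus(a+1,b)\oplus(a,b+1)\oplus\cdots\oplus(a,n)$ for $i=na+b$, along which (i) every indecomposable rigid object of $\cc$ occurs as a summand of some $T_{t_i}$, and (ii) every exchange performed is of the form $\bigl((a,b),(a+1,b)\bigr)$, a $\tau$-shift pair whose two exchange triangles are the AR-triangle $(a,b)\to(a,b+1)\oplus(a+1,b-1)\to(a+1,b)\to\Sigma(a,b)$ and the triangle with zero middle term (because $\Sigma(a+1,b)\cong(a,b)$, whence the ``$1$'' in the relations). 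Consequently only the relations of Propositions~\ref{p:projective-injective-case}, \ref{p:n-case} and~\ref{p:non-n-case} are ever needed, and these are exactly what the AR-based Grassmannian machinery can deliver; finiteness of type then closes the argument as in your proposal. Your proof becomes correct if you replace ``for each exchange pair'' by this restricted family and add the path construction; as written, the inductive engine is not available.
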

According to the {\it Laurent phenomenon}~\cite{FominZelevinsky02}, each cluster variable $\mathbb{X}_M$ is a Laurent polynomial in the initial cluster variables~$x_1,\cdots, x_n$. Thus there exists a unique polynomial $f(x_1,\cdots, x_n)$ which is not divisible by any $x_i$ such that 
\[\mathbb{X}_M=\frac{f(x_1,\cdots, x_n)}{x_1^{d_1}\cdots x_n^{d_n}}.
\]
The {\it denominator vector} of $\mathbb{X}_M$ is defined to be 
\[\opname{den}(\mathbb{X}_M)=(d_1,\cdots,d_n)^{\opname{tr}}\in \Z^n.
\]

As an immediately  consequence of Theorem~1.3 of~\cite{FGL1} and Theorem~\ref{t:main-thm}, we obtain
\begin{corollary}
For each indecomposable $\tau_A$-rigid $A$-module $M$, we have
\[\opname{den}(\mathbb{X}_M)=\underline{\opname{\rank}}~M.
\]
\end{corollary}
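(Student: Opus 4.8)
The plan is to obtain the identity by combining the two cited results, exploiting the fact that the denominator vector of a cluster variable is an intrinsic invariant: it depends only on the cluster variable as an element of $\mathcal{F}$ and on the fixed initial cluster $x_1,\dots,x_n$, and not on any particular recipe used to produce that element. So fix an indecomposable $\tau_A$-rigid $A$-module $M$. By Proposition~\ref{p:tau-rigid} there is a unique indecomposable rigid object $X\in\cc\backslash\add\Sigma T$ with $M=F(X)$, and under the Buan--Marsh--Vatne bijection of Theorem~\ref{t:BMV-main-theorem} the object $X$ corresponds to a well-defined non-initial cluster variable of $\mathcal{A}_T$, which I denote by $x_X$; here the initial cluster is the one attached to $\Sigma T$.

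Next I would identify $\mathbb{X}_M$ with $x_X$. Theorem~\ref{t:main-thm} asserts that $M\mapsto\mathbb{X}_M$ is a bijection from the indecomposable $\tau_A$-rigid modules onto the non-initial cluster variables of $\mathcal{A}_T$; more precisely, its proof shows that this bijection is compatible with mutation and matches the Buan--Marsh--Vatne bijection of Theorem~\ref{t:BMV-main-theorem} through the equivalence $F$. Since a bijection between indecomposable rigid objects and cluster variables which is compatible with mutation and fixes the initial seed is unique, this forces $\mathbb{X}_M=x_X$ as elements of $\mathcal{F}$. By the uniqueness of the reduced Laurent expansion recalled just above, the denominator vector is a function of the cluster variable alone, whence $\opname{den}(\mathbb{X}_M)=\opname{den}(x_X)$.

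Finally, Theorem~1.3 of~\cite{FGL1} computes exactly this quantity, stating that the denominator vector of the cluster variable attached to an indecomposable rigid object $X$ equals the rank vector $\rankv F(X)$. Chaining the equalities then gives
\[
\opname{den}(\mathbb{X}_M)=\opname{den}(x_X)=\rankv F(X)=\rankv M.
\]
I expect no genuine obstacle, the argument being essentially formal; the single point deserving attention is bookkeeping, namely verifying that the choice of initial seed and the labelling of the summands $T_1,\dots,T_n$ are normalised in the same fashion here and in~\cite{FGL1}, so that both denominator vectors are read off against the same initial cluster. Once these conventions are aligned the corollary is immediate.
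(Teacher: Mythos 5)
Your proposal is correct and matches the paper's own (implicit) argument: the paper states the corollary as an immediate consequence of Theorem~1.3 of~\cite{FGL1} together with Theorem~\ref{t:main-thm}, exactly the chain $\opname{den}(\mathbb{X}_M)=\opname{den}(x_X)=\rankv F(X)=\rankv M$ you write down, with the identification $\mathbb{X}_M=x_X$ coming from the proof of Theorem~\ref{t:main-thm} (which verifies the exchange relations along a path through all indecomposable rigid objects, so your uniqueness-of-mutation-compatible-bijection remark is the same mechanism made explicit). No gap; the only detail you flag, alignment of the initial seed $\Sigma T$ and the labelling of the $T_i$ with the conventions of~\cite{FGL1}, is indeed the one bookkeeping point and it holds here.
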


\section{Index and coindex}~\label{s:index-coindex}
\subsection{Reformulation of the main result}~\label{ss:index-coindex}
Recall that $\cc$ is a cluster tube of rank $n+1$.   For a set $\mathcal{X}$ consisting of certain objects of $\cc$, we define 
\[\Sigma \mathcal{X}:=\{\Sigma X~|~X\in \mathcal{X}\}.
\]

Let $T=\bigoplus_{i=1}^nT_i$ be a basic  maximal rigid object of $\cc$. 
Recall that $\opname{pr} T$ is the full subcategory of $\cc$ consisting of objects which are finitely presented by $T$.
 Set
\[\mathcal{F}:=\{(a, b)~|~b\leq n\} \cup \{(a, b)~|~n+1\leq b\leq 2n, a+b\leq 2n+1\}.
\]
Note that $\Sigma T$ is also a basic maximal rigid object of $\cc$.
As shown in~\cite{Yang12}, if $T$ lies in the wing $\cw_{(1,n)}$, then an indecomposable object $X$ lies in $\opname{pr}T$ if and only if $X\in \mathcal{F}$.  Moreover, an indecomposable object $X$ lies in $\opname{pr}\Sigma T$ if and only if $X\in \Sigma \mathcal{F}$~({\it cf.} also~\cite{Vatne11}). Define
\[\mathcal{R}:=\{(a, b)~|~b\leq n\}. 
\]
In particular, $\mathcal{R}$ consists of indecomposable rigid objects of $\cc$. It is clear that 
\[\Sigma \mathcal{R}=\mathcal{R}\subset \opname{pr}T\cap \opname{pr}\Sigma T.
\]
Let $\go(\add T)$ be the split Grothendieck group of $\add T$.
For each $X\in \opname{pr}T$, we have a triangle
\[T_1^X\to T_0^X\to X\to \Sigma T_1^X
\]
with $T_1^X, T_0^X\in \add T$.
The {\it index} $\opname{ind}_T(X)$ of $X$ with respect to $T$ is defined to be
\[\opname{ind}_T(X):=[T_0^X]-[T_1^X]\in \go(\add T),
\]
where $[*]$ stands for the image of the object $*$ in the Grothendieck group $\go(\add T)$. Similarly, for each object $X\in \opname{pr}\Sigma T$, there is a triangle
\[X\to \Sigma^2T_X^1\to \Sigma^2T_X^0\to \Sigma X
\]
with $T_X^0, T_X^1\in \add T$. The {\it coindex} $\opname{coind}_T(X)$ of $X$ with respect to $T$ is defined to be
\[\opname{coind}_T(X):=[T_X^1]-[T_X^0]\in \go(\add T).
\]
It is not hard to see that the index and coindex are well-defined. In particular, for each object $X\in \opname{pr}T\cap \opname{pr}\Sigma T$,  $\opname{ind}_T(X)$ and $\opname{coind}_T(X)$ are defined.  For an object $X\in \opname{pr}T\cap \opname{pr}\Sigma T$ such that $F(X)$ is locally free, we define the {\it Caldero-Chapoton map} or Palu's {\it cluster character} $\mathbb{X}_?^T$ of  $X$ with respect to $T$ as follows
\[\mathbb{X}_X^T=x^{-\opname{coind}_T(X)}\sum_{\underline{e}}\chi(\opname{Gr}_{\underline{e}}^{lf}(F(X)))x^{B_T\underline{e}}\in \mathbb{Q}[x_1^{\pm},\cdots,x_n^{\pm}].
\]
If $X$ is an indecomposable rigid object in $\cc\backslash\add \Sigma T$, we clearly have $\opname{coind}_T(X)=\mathfrak{i}(F(X))$. Consequently, $\mathbb{X}_{F(X)}=\mathbb{X}^T_X$ in this case. On the other hand,   we also have
\[\mathbb{X}_{\Sigma T_i}^T=x_i~\text{for $i=1,\cdots, n$,} 
\]
which are precisely the initial cluster variables of $\mathcal{A}_T$.
The main result Theorem~\ref{t:main-thm} can be restated as follows.
\begin{theorem}~\label{t:main-thm-restate}
Let $T$ be a basic maximal rigid object of $\cc$ and $\mathcal{A}_T$ the associated cluster algebra. The Caldero-Chapoton map $\mathbb{X}^T_?$ yields a bijection between the indecomposable rigid objects of $\cc$ and the cluster variables of $\mathcal{A}_T$.
\end{theorem}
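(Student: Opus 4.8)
The plan is to deduce Theorem~\ref{t:main-thm-restate} from the purely combinatorial bijection of Buan--Marsh--Vatne recorded in Theorem~\ref{t:BMV-main-theorem}, by showing that the map $\mathbb{X}^T_?$ \emph{realizes} that bijection. Since Theorem~\ref{t:BMV-main-theorem} already supplies a bijection between the indecomposable rigid objects of $\cc$ and the cluster variables of $\mathcal{A}_T$ which sends $\Sigma T_i$ to the $i$-th initial cluster variable and is compatible with mutation, it suffices to prove that for every indecomposable rigid object $X$ the Laurent polynomial $\mathbb{X}^T_X$ equals the cluster variable that Theorem~\ref{t:BMV-main-theorem} attaches to $X$. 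The base case is already available: $\mathbb{X}^T_{\Sigma T_i}=x_i$. As every indecomposable rigid object occurs as a summand of a maximal rigid object reachable from $\Sigma T$ by iterated mutation, the whole statement follows by induction along mutations, \emph{provided} $\mathbb{X}^T_?$ obeys the Fomin--Zelevinsky exchange relations.

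The core of the argument is therefore a multiplication formula. For an exchange pair $(Y,Y^*)$ of indecomposable rigid objects, with exchange triangles $Y^*\to B\to Y\to\Sigma Y^*$ and $Y\to B'\to Y^*\to\Sigma Y$ whose middle terms $B,B'$ lie in the additive closure of the common almost complete maximal rigid object, I would establish
\[\mathbb{X}^T_Y\,\mathbb{X}^T_{Y^*}=\mathbb{X}^T_{B}+\mathbb{X}^T_{B'}.\]
Combined with the multiplicativity $\mathbb{X}^T_{B}=\prod_j \mathbb{X}^T_{B_j}$ over the indecomposable summands $B_j$ of $B$ (which follows from Lemma~\ref{l:multiplicativity-CC} together with the additivity of $\opname{coind}_T$ along direct sums), this is exactly the shape of a cluster exchange relation for $B_T$. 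In the identity the coindex prefactors $x^{-\opname{coind}_T}$ produce precisely the monomials prescribed by $B_T$; this monomial bookkeeping is handled by the additivity of index and coindex along the exchange triangles developed in Section~\ref{s:index-coindex}. The remaining, polynomial, part amounts to comparing $\chi(\opname{Gr}^{lf}_{\underline e}(F(B)))$ and $\chi(\opname{Gr}^{lf}_{\underline e}(F(B')))$ with the convolution $\sum_{\underline{e'}+\underline{e''}=\underline e}\chi(\opname{Gr}^{lf}_{\underline{e'}}(F(Y)))\,\chi(\opname{Gr}^{lf}_{\underline{e''}}(F(Y^*)))$, for which Lemma~\ref{l:multiplicativity-CC} is the starting point.

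The hard part is proving this multiplication formula, and it is precisely here that the type $\mathrm{C}$ features force a real departure from \cite{CalderoChapoton,DG}. In the classical proof one stratifies the Grassmannian of the middle term according to how a submodule meets the image of $Y^*$ and projects onto $Y$, and then computes Euler characteristics fibrewise. This breaks down in our setting because the intersection of two locally free submodules need not be locally free, so the natural strata are no longer governed by locally free Grassmannians. To repair this I would exploit the natural $\mathbb{C}^\times$-action on $\opname{Gr}^{lf}_{\underline e}(M)$ furnished by Lemma~\ref{l:c-action}: Euler characteristics may be computed on its fixed-point locus, where submodules decompose compatibly with the unique loop $\rho$ and the failure of local freeness becomes controllable. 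The second ingredient is the auxiliary Grassmannian attached to an Auslander--Reiten sequence introduced in Section~\ref{s:grassmanian}, which parametrizes locally free submodules together with the extra datum needed to stratify cleanly; its two projections give the fibrations whose Euler characteristics assemble into the two summands $\mathbb{X}^T_B$ and $\mathbb{X}^T_{B'}$. I expect the genuinely delicate case to be an exchange involving the length-$n$ summand $T_1$, the vertex carrying the loop, where $\dim_\C\Hom_\cc$ can equal $2$ by Lemma~\ref{l:morphism-non-vanish} and a middle term of length $n+1$ appears; here Corollary~\ref{c:equality-cc-map} is used to replace the non-rigid module $F((i,n+1))$ by the locally free $F((i+1,n-1))$ without changing any Euler characteristic, and this is the mechanism that reproduces the factor-of-two entries of $B_T$.

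With the multiplication formula in place the induction closes: every $\mathbb{X}^T_X$ is forced to coincide with the cluster variable attached to $X$ by Theorem~\ref{t:BMV-main-theorem}, so $\mathbb{X}^T_?$ takes values in the cluster variables of $\mathcal{A}_T$ and realizes the Buan--Marsh--Vatne bijection; in particular it is automatically injective and surjective, which is the assertion of Theorem~\ref{t:main-thm-restate}. Finally, the equivalent Theorem~\ref{t:main-thm} is recovered by restricting $\mathbb{X}^T_?$ to $\cc\setminus\add\Sigma T$ and invoking Proposition~\ref{p:tau-rigid} to identify the indecomposable rigid objects there with the indecomposable $\tau_A$-rigid $A$-modules, under which $\mathbb{X}^T_X=\mathbb{X}_{F(X)}$ since $\opname{coind}_T(X)=\mathfrak{i}(F(X))$.
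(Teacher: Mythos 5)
Your outline reproduces the paper's architecture almost exactly---reduce to the bijection of Theorem~\ref{t:BMV-main-theorem} via exchange-relation compatibility starting from $\mathbb{X}^T_{\Sigma T_i}=x_i$, use the $\mathbb{C}^{\times}$-action of Lemma~\ref{l:c-action} and the auxiliary Grassmannian of Section~\ref{s:grassmanian} to tame non-locally-free subquotients, and use Corollary~\ref{c:equality-cc-map}/Lemma~\ref{l:equality-cc-map} to trade the non-rigid $F((i,n+1))$ for $F((i+1,n-1))$. But the step you declare to be the core---a multiplication formula $\mathbb{X}^T_Y\mathbb{X}^T_{Y^*}=\mathbb{X}^T_B+\mathbb{X}^T_{B'}$ for an \emph{arbitrary} exchange pair---is precisely what the paper does not prove, and the tools you cite cannot deliver it. Proposition~\ref{p:fibre-grassmanian} rests on the result of \cite{DG} that the fibres of $\zeta_{\underline{g},lf}$ are affine spaces, and that computation is specific to \emph{almost split} sequences; for a general exchange pair the analogous fibres are not affine, and one is thrown back on a Caldero--Keller/Palu-style dichotomy over the two extension spaces, which is exactly the kind of argument that breaks for Grassmannians of locally free submodules. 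Relatedly, your description of the mechanism is wrong in a concrete way: the auxiliary Grassmannian carries a \emph{single} map $\zeta_{\underline{g},lf}$ to $\coprod\opname{Gr}^{lf}_{\underline{e}}(L)\times\opname{Gr}^{lf}_{\underline{f}}(N)$, not two projections producing the two summands; the second exchange term arises from the \emph{missing} stratum $\zeta_{\underline{g},lf}^{-1}(0,N)=\emptyset$ combined with the coindex bookkeeping of Corollary~\ref{c:coindex-index-matrix}, which is why every relation actually proved in the paper has the shape $1+\text{monomial}$ rather than $\mathbb{X}^T_B+\mathbb{X}^T_{B'}$ with two nonzero middles.

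The paper closes this gap by never invoking a general multiplication theorem. It exhibits an explicit staircase path $t_0,t_1,\dots,t_{n^2}$ in $\mathbb{T}_n$ whose maximal rigid objects $(a+1,1)\oplus\cdots\oplus(a+1,b)\oplus(a,b+1)\oplus\cdots\oplus(a,n)$ jointly contain every indecomposable rigid object, and along which every exchange pair has the special form $(M,\Sigma M)$ (since $\Sigma(a+1,b)=\tau(a+1,b)=(a,b)$); consequently one exchange monomial is always $1$, the only triangles needed are AR-triangles, and Propositions~\ref{p:projective-injective-case}, \ref{p:n-case} and~\ref{p:non-n-case} suffice---the loop-vertex exchange between $(1,n)$ and $(n+1,n)$ being handled separately through the exchange triangles of Lemma~\ref{l:exchange-triangle-1}, not through an AR-sequence. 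Your induction would close if you made the same restriction: prove the relation only for the AR-type exchanges $(a,b)\leftrightarrow(a+1,b)$ plus the initial pair $((1,n),(n+1,n))$, and verify that the explicit path meets every rigid indecomposable. One smaller correction: the $\mathbb{C}^{\times}$-action is exploited not through a fixed-point computation but through Lemma~\ref{l:free-action}, which shows the action on $\opname{Gr}^{lf}_{\underline{g}}(M)\setminus\opname{Gr}^{lf}_{\underline{g}}(M,\iota,\kappa)$ is \emph{free}, forcing that complement to have Euler characteristic zero (Proposition~\ref{p:equality-Euler-char}); this is equivalent in effect to your fixed-locus remark, but freeness is what is actually checked, and it is established only in the delicate case $\hat{N}\in(2,n)^{\sqsubset}\cap{}^{\sqsupset}(n,n)$ with $[\underline{g}:1]=1$, the other cases being handled by direct rank-vector considerations.
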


\subsection{Behavior of index and coindex}
In this subsection, we establish some basic properties for index and coindex with respect to triangles in $\cc$. Throughout this section, we fix a basic maximal rigid object $T=\bigoplus_{i=1}^nT_i$ of $\cc$ with $T_1=(1,n)$.
We begin with the following result which is a direct consequence of the definitions and the fact that $\mathcal{R}=\Sigma \mathcal{R}$ ({\it cf.} Lemma 2.1 of~\cite{Palu} ).
\begin{lemma}
For any $X\in \mathcal{R}$, we have $\opname{ind}_T(X)=-\opname{coind}_T(\Sigma X)$.
\end{lemma}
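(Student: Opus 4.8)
The plan is to exploit the compatibility of the defining triangles for index and coindex under the suspension functor $\Sigma$, together with the well-definedness of the coindex. Since $\mathcal{R}=\Sigma\mathcal{R}\subset\opname{pr}T\cap\opname{pr}\Sigma T$, for $X\in\mathcal{R}$ both $\opname{ind}_T(X)$ and $\opname{coind}_T(\Sigma X)$ are defined (indeed $\Sigma X\in\Sigma\mathcal{R}=\mathcal{R}\subset\opname{pr}\Sigma T$), so the statement makes sense.

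First I would write down the defining triangle for the index of $X$, namely $T_1^X\to T_0^X\to X\to \Sigma T_1^X$ with $T_0^X,T_1^X\in\add T$, so that $\opname{ind}_T(X)=[T_0^X]-[T_1^X]$. Applying the triangle functor $\Sigma$ yields the triangle $\Sigma T_1^X\to \Sigma T_0^X\to \Sigma X\to \Sigma^2 T_1^X$. Rotating this triangle twice produces $\Sigma X\to \Sigma^2 T_1^X\to \Sigma^2 T_0^X\to \Sigma^2 X$, in which $\Sigma^2 T_1^X,\Sigma^2 T_0^X\in\add\Sigma^2 T$. This is precisely a triangle of the shape used to define the coindex of $\Sigma X$, with $T_{\Sigma X}^1=T_1^X$ and $T_{\Sigma X}^0=T_0^X$. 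I would then invoke the well-definedness of the coindex, so that any triangle $\Sigma X\to \Sigma^2 T^1\to \Sigma^2 T^0\to \Sigma^2 X$ with $T^0,T^1\in\add T$ computes $[T^1]-[T^0]$ in $\go(\add T)$. Applying this to the triangle just constructed gives $\opname{coind}_T(\Sigma X)=[T_1^X]-[T_0^X]=-\opname{ind}_T(X)$, as desired.

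The only point that requires care — and the step I expect to be the main obstacle — is the well-definedness that lets me read off the coindex directly from the rotated triangle rather than from a preferred (say, minimal) presentation. Concretely, I must verify that the class $[T^1]-[T^0]\in\go(\add T)$ is independent of the chosen triangle; this is where the hypothesis $X\in\mathcal{R}$ (hence $\Sigma X\in\mathcal{R}\subset\opname{pr}T\cap\opname{pr}\Sigma T$) and the equivalence $F:\opname{pr}T/\add\Sigma T\xrightarrow{\sim}\mod A$ enter, exactly in the spirit of Lemma 2.1 of~\cite{Palu}. Once independence of the presentation is established, the identification of the two triangles is purely formal and the stated equality follows at once.
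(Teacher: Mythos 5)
Your proof is correct and is essentially the argument the paper has in mind: the paper treats the lemma as a direct consequence of the definitions and of $\mathcal{R}=\Sigma\mathcal{R}$ (citing Lemma 2.1 of Palu), and your suspension-plus-rotation of the index triangle, combined with the well-definedness of index and coindex (which the paper asserts separately), is exactly that argument made explicit.
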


Recall that $A=\End_{\cc}(T)$ is the endomorphism algebra of $T$ and $\mod A$ is the category of finitely generated right $A$-modules. Let $S_1, \cdots, S_n$ be the simple $A$-modules associated to $T_1,\cdots, T_n$ respectively. Denote by $P_i$ ({\it resp.} $I_i$) the projective cover ({\it resp.} injective hull) of $S_i$.
We have a truncated Euler form $\langle-,-\rangle_{\leq 1}$ on $\mod A$ and an equivalence 
\[F=\Hom_{\cc}(T,-):\opname{pr} T/\add \Sigma T\to \mod A.
\]
The following result has been proved in~\cite{Palu} ({\it cf.}~ Lemma~2.3 of~\cite{Palu}).
\begin{lemma}~\label{l:index-coindex-euler-form}
Let $X$ be an indecomposable object of $\cc$.
If $X\in \opname{pr}T$, then
\[\opname{ind}_T(X)=\begin{cases}-[T_i],& X\cong \Sigma T_i;\\ \sum_{i=1}^n\langle F(X), S_i\rangle_{\leq 1} [T_i],& \text{else}.\end{cases}
\]
If $X\in \opname{pr}\Sigma T$, then 
\[\opname{coind}_T(X)=\begin{cases}-[T_i],& X\cong \Sigma T_i;\\ \sum_{i=1}^n\langle S_i, F(X)\rangle_{\leq 1} [T_i],& \text{else}.\end{cases}
\]
\end{lemma}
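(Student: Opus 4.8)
The plan is to prove both identities by computing, for each $i$, the coefficient of $[T_i]$ in $\opname{ind}_T(X)$ (resp.\ in $\opname{coind}_T(X)$) in the basis $[T_1],\dots,[T_n]$ of $\go(\add T)$, following Palu. I would first dispatch the two degenerate cases: if $X\cong\Sigma T_i$ one may use the presentation triangle $T_i\to 0\to \Sigma T_i\xrightarrow{\id}\Sigma T_i$, giving $\opname{ind}_T(\Sigma T_i)=-[T_i]$, and dually $\Sigma T_i\to 0\to \Sigma^2 T_i\xrightarrow{\id}\Sigma^2 T_i$ gives $\opname{coind}_T(\Sigma T_i)=-[T_i]$; so I may assume $X\not\cong\Sigma T_i$. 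The key bookkeeping tool is the identity $\mathfrak{m}_{T'}T_i=\dim_\C\Hom_A(F(T'),S_i)$ for $T'\in\add T$: since $F(T_i)=P_i$ is the indecomposable projective with top $S_i$ (Section~\ref{ss: endomorphism-quiver}), one has $\Hom_A(P_j,S_i)=\delta_{ij}\C$, and the multiplicity of $T_i$ in $T'$ equals that of $P_i$ in $F(T')$. Dually $F(\Sigma^2 T_i)=I_i$ is the indecomposable injective with socle $S_i$, whence $\mathfrak{m}_{T'}T_i=\dim_\C\Hom_A(S_i,F(\Sigma^2 T'))$ via $\Hom_A(S_i,I_j)=\delta_{ij}\C$.

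For the index I would fix a \emph{minimal} presentation triangle $T_1^X\xrightarrow{u}T_0^X\to X\to\Sigma T_1^X$. Applying the cohomological functor $F=\Hom_\cc(T,-)$ and using $\Hom_\cc(T,\Sigma T_1^X)=\Ext^1_\cc(T,T_1^X)=0$ (rigidity of $T$, as $T_1^X\in\add T$), I obtain a right-exact sequence $F(T_1^X)\xrightarrow{F(u)}F(T_0^X)\to F(X)\to 0$, that is, a projective presentation of $F(X)$; by minimality of the approximations it is a minimal projective presentation. Writing $K=\ker(F(T_0^X)\twoheadrightarrow F(X))$, the map $F(T_1^X)\to K$ is a projective cover and so induces an isomorphism on tops, giving $\dim\Hom_A(F(T_1^X),S_i)=\dim\Hom_A(K,S_i)$. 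Applying $\Hom_A(-,S_i)$ to $0\to K\to F(T_0^X)\to F(X)\to 0$ and using $\Ext^1_A(F(T_0^X),S_i)=0$ (projectivity) yields $\dim\Hom_A(F(T_0^X),S_i)-\dim\Hom_A(K,S_i)=\dim\Hom_A(F(X),S_i)-\dim\Ext^1_A(F(X),S_i)=\langle F(X),S_i\rangle_{\leq 1}$. Combining with the multiplicity identity gives $[\opname{ind}_T(X):i]=\mathfrak{m}_{T_0^X}T_i-\mathfrak{m}_{T_1^X}T_i=\langle F(X),S_i\rangle_{\leq 1}$, as desired.

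The coindex is entirely dual, and I would run the mirror argument. Take a minimal triangle $X\to\Sigma^2 T_X^1\to\Sigma^2 T_X^0\to\Sigma X$; applying $F$, using that $F$ carries $\add\Sigma^2 T$ to injectives together with the relevant $\Ext$-vanishing, gives a minimal injective copresentation $0\to F(X)\to F(\Sigma^2 T_X^1)\to F(\Sigma^2 T_X^0)$. Setting $C=\cok(F(X)\hookrightarrow F(\Sigma^2 T_X^1))$, minimality makes $C\hookrightarrow F(\Sigma^2 T_X^0)$ an injective envelope, so socles agree and $\dim\Hom_A(S_i,F(\Sigma^2 T_X^0))=\dim\Hom_A(S_i,C)$. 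Applying $\Hom_A(S_i,-)$ to $0\to F(X)\to F(\Sigma^2 T_X^1)\to C\to 0$ and using injectivity of $F(\Sigma^2 T_X^1)$ gives $\dim\Hom_A(S_i,F(\Sigma^2 T_X^1))-\dim\Hom_A(S_i,C)=\langle S_i,F(X)\rangle_{\leq 1}$, and the dual multiplicity identity converts the left-hand side into $[\opname{coind}_T(X):i]=\mathfrak{m}_{T_X^1}T_i-\mathfrak{m}_{T_X^0}T_i$.

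The one genuine subtlety, and the step I would be most careful about, is verifying that applying $F$ to the presentation (resp.\ copresentation) triangle really yields a \emph{minimal} projective presentation (resp.\ injective copresentation) of $F(X)$ in this maximal-rigid (non-cluster-tilting) setting. This rests on the rigidity of $T$ to annihilate the boundary terms $\Ext^1_\cc(T,T')$ for $T'\in\add T$, and on the equivalence properties of $F$ recorded in Section~\ref{ss: endomorphism-quiver} identifying $\add T$ and $\add\Sigma^2 T$ with the projective and injective $A$-modules. Once these are secured, the top/socle-matching arguments and the four-term exact sequences are routine.
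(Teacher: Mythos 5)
Your proof is correct and is essentially the proof the paper relies on: the paper gives no argument of its own but cites Lemma 2.3 of Palu, whose proof is exactly your computation --- apply $F$ to minimal (co)presentation triangles to obtain a minimal projective presentation (resp.\ injective copresentation) of $F(X)$, then count multiplicities of the $T_i$ by pairing with the simples $S_i$. The subtlety you flag for the maximal-rigid setting does close as you indicate: since $\Hom_{\cc}(T',\Sigma T'')=\Ext^1_{\cc}(T',T'')=0$ for $T',T''\in\add T$, any morphism out of $\add T$ (and, via factorizations through $\add\Sigma T$, any component into $\add\Sigma^2 T$) that is killed by $F$ is already zero, so minimal triangles do induce minimal presentations once the case $X\cong\Sigma T_i$ --- precisely where a vanishing component would split a summand $\Sigma T_i$ off the indecomposable $X$ --- has been excluded, as you did.
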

\begin{corollary}~\label{c:coindex-index-matrix}
Let $X\in \opname{pr}T\cap\opname{pr}\Sigma T$ such that $F(X)$ is a locally free $A$-module, 
the coordinate vector of $\opname{coind}_T(X)-\opname{ind}_T(X)$ with respect to the basis $[T_1],\cdots, [T_n]$ of $\go(\add T)$ is $B_T\rankv F(X)$.
\end{corollary}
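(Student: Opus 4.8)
The plan is to reduce the statement to a single bilinearity computation in the Grothendieck group $\go(\mod A)$, assembling the three ingredients already at hand: Lemma~\ref{l:index-coindex-euler-form}, the identity $\langle [M],[N]\rangle_a=\langle M,N\rangle_{\leq 1}-\langle N,M\rangle_{\leq 1}$ recorded just before Proposition~\ref{p:skew-symmetrizable-matrix-via-skew-symmetric-Euler-form}, and that same Proposition, together with the rank-versus-dimension relation of Remark~\ref{r:locally-free}.

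First I would dispose of the degenerate case $X\cong \Sigma T_i$. Here $\opname{ind}_T(X)=\opname{coind}_T(X)=-[T_i]$, so the left-hand side is $0$; on the other hand $F(X)=\Hom_\cc(T,\Sigma T_i)=0$ by rigidity of $T$ (recall $\Ext^1_\cc(T,T)=0$), so $\rankv F(X)=0$ and the right-hand side is $0$ as well. Thus I may assume $X\not\cong\Sigma T_i$ and apply the non-trivial branch of Lemma~\ref{l:index-coindex-euler-form}.

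Write $M=F(X)$. Lemma~\ref{l:index-coindex-euler-form} then gives
\[\opname{coind}_T(X)-\opname{ind}_T(X)=\sum_{i=1}^n\left(\langle S_i,M\rangle_{\leq 1}-\langle M,S_i\rangle_{\leq 1}\right)[T_i],\]
and by the defining identity of $\langle-,-\rangle_a$ the coefficient of $[T_i]$ is exactly $\langle [S_i],[M]\rangle_a$. It therefore remains to check the scalar identity $\langle [S_i],[M]\rangle_a=[B_T\rankv M:i]$ for each $i$. Expanding $[M]=\dimv M=\sum_{j}m_j[S_j]$ in $\go(\mod A)$ and using bilinearity, $\langle [S_i],[M]\rangle_a=\sum_{j=1}^n m_j\langle S_i,S_j\rangle_a$. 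Since $M$ is locally free, Remark~\ref{r:locally-free} gives $m_1=2r_1$ and $m_j=r_j$ for $j\geq 2$, where $\rankv M=(r_1,\dots,r_n)^{\opname{tr}}$; substituting and invoking Proposition~\ref{p:skew-symmetrizable-matrix-via-skew-symmetric-Euler-form} turns the sum into $r_1 b_{i1}+\sum_{j\geq 2}r_j b_{ij}=[B_T\rankv M:i]$.

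No step here is a genuine obstacle; the only point demanding care is the loop-vertex $1$, where the skew-symmetrizable (rather than skew-symmetric) nature of $B_T$ enters. There the coefficient $m_1=2r_1$ coming from $\dim=2\cdot\rank$ pairs with $\langle S_i,S_1\rangle_a$ to produce $2r_1\langle S_i,S_1\rangle_a$, which must be recognized as $r_1\cdot(2\langle S_i,S_1\rangle_a)=r_1 b_{i1}$ using the doubled first column of $B_T$ from Proposition~\ref{p:skew-symmetrizable-matrix-via-skew-symmetric-Euler-form}. This is precisely the compatibility that makes $B_T$ act correctly on rank vectors rather than on dimension vectors, and verifying it is the substance of the computation.
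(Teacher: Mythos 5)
Your proof is correct and follows essentially the same route as the paper's: combine both branches of Lemma~\ref{l:index-coindex-euler-form}, rewrite the coefficient of $[T_i]$ as $\langle [S_i],[F(X)]\rangle_a$ via the identity $\langle[M],[N]\rangle_a=\langle M,N\rangle_{\leq 1}-\langle N,M\rangle_{\leq 1}$, and identify the result with $B_T\rankv F(X)$ using Proposition~\ref{p:skew-symmetrizable-matrix-via-skew-symmetric-Euler-form} together with $\dimv F(X)=(2r_1,r_2,\cdots,r_n)^{\opname{tr}}$ from Remark~\ref{r:locally-free} --- you merely spell out the bilinearity computation (correctly, including the factor-of-two bookkeeping at the loop vertex) that the paper compresses into a citation. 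The one thing to add is the paper's opening reduction: Lemma~\ref{l:index-coindex-euler-form} is stated for \emph{indecomposable} objects, and your dichotomy ``$X\cong\Sigma T_i$ or not'' presupposes indecomposability, so you should first observe that index, coindex, and $\rankv F(-)$ are all additive over direct sums, whence it suffices to treat indecomposable $X$.
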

\begin{proof}
It suffices to prove the statement for indecomposable objects.
For an indecomposable object $X\in \opname{pr}T\cap \opname{pr}\Sigma T$, the index $\opname{ind}_T(X)$ and the coindex $\opname{coind}_T(X)$ are well-defined.  It is clear that the equality holds for $X\cong \Sigma T_i$ by Lemma~\ref{l:index-coindex-euler-form}. Now assume that $X\not\cong \Sigma T_i$ for any $i$. Again by Lemma~\ref{l:index-coindex-euler-form}, we obtain
\[\opname{coind}_T(X)-\opname{ind}_T(X)=\sum_{i=1}^n\langle F(X), S_i\rangle_a[T_i]=([T_1],\cdots, [T_n])B_T\rankv F(X),
\]
where the last equality follows from Proposition~\ref{p:skew-symmetrizable-matrix-via-skew-symmetric-Euler-form}.
\end{proof}

We now turn to the behavior of index and coindex with respect to triangles.
The following proposition  is a special case of Proposition 2.2 of ~\cite{Palu}. 
\begin{proposition}~\label{p:index-coindex-triangle}
Let $X\xrightarrow{f}Y\xrightarrow{g}Z\xrightarrow{h}\Sigma X$ be a triangle in $\cc$.
\begin{itemize}
\item If $F(g)$ is surjective and $X, Y, Z\in \opname{pr} T$, then $\opname{ind}_T(Y)=\opname{ind}_T(X)+\opname{ind}_T(Z)$; 
\item If $F(f)$ is injective and $X, Y, Z\in \opname{pr}\Sigma T$, then $\opname{coind}_T(Y)=\opname{coind}_T(X)+\opname{coind}_T(Z)$.
\end{itemize}
\end{proposition}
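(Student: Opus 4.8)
The plan is to prove the first (index) statement by the octahedral construction of~\cite{Palu}, and to deduce the second (coindex) statement by the dual argument. Rather than trying to read off additivity from the truncated Euler form of Lemma~\ref{l:index-coindex-euler-form} (which is not legitimate here, since $A$ is not hereditary and $\langle-,S_i\rangle_{\leq 1}$ need not be additive on short exact sequences), I work entirely inside $\cc$. The only property of $T$ I use is rigidity: since $\Ext^1_\cc(T,T')=\Hom_\cc(T,\Sigma T')=0$ for every $T'\in\add T$, applying $F$ to any triangle $T_1\to T_0\to W\to \Sigma T_1$ with $T_0,T_1\in\add T$ yields a projective presentation of $F(W)$. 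Consequently the absence of a cluster-tilting object in $\cc$ is harmless. I also use freely that $\opname{ind}_T$ is well defined, that is, $[T_0]-[T_1]\in\go(\add T)$ is independent of the chosen presentation triangle.

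First I fix presentation triangles $T_1^X\to T_0^X\xrightarrow{p_X}X\to\Sigma T_1^X$ and $T_1^Z\to T_0^Z\xrightarrow{p_Z}Z\to\Sigma T_1^Z$, available because $X,Z\in\opname{pr}T$. Since $F(g)$ is surjective, the approximation $p_Z$ lifts along $g$ to a map $\tilde p_Z\colon T_0^Z\to Y$ with $g\tilde p_Z=p_Z$, and I set $\phi=(fp_X,\tilde p_Z)\colon T_0^X\oplus T_0^Z\to Y$. A short diagram chase shows that $\phi$ is a right $\add T$-approximation: given $t\colon T_i\to Y$, write $gt=p_Z s$ (using that $p_Z$ is an approximation), so that $t-\tilde p_Z s$ is killed by $g$, hence factors through $f$, hence through $p_X$. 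As $Y\in\opname{pr}T$, completing $\phi$ to a triangle gives a presentation $T_1^Y\to T_0^X\oplus T_0^Z\xrightarrow{\phi}Y\to\Sigma T_1^Y$ with $T_1^Y\in\add T$, whence $\opname{ind}_T(Y)=[T_0^X]+[T_0^Z]-[T_1^Y]$.

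The heart of the argument is the computation of $[T_1^Y]$, carried out by applying the octahedral axiom to the composable pair $T_0^X\oplus T_0^Z\xrightarrow{\phi}Y\xrightarrow{g}Z$. Because $gf=0$ and $g\tilde p_Z=p_Z$, the composite $g\phi$ equals $(0,p_Z)$, whose cone is $\Sigma T_0^X\oplus\Sigma T_1^Z$; meanwhile the cone of $\phi$ is $\Sigma T_1^Y$ and the cone of $g$ is $\Sigma X$. The octahedron therefore produces a triangle $\Sigma T_1^Y\to\Sigma T_0^X\oplus\Sigma T_1^Z\to\Sigma X\to\Sigma^2 T_1^Y$, which after desuspension reads $T_1^Y\to T_0^X\oplus T_1^Z\to X\to\Sigma T_1^Y$. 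This is again a presentation triangle for $X$, now with both non-$X$ terms in $\add T$, so well-definedness of $\opname{ind}_T(X)$ forces $[T_0^X]+[T_1^Z]-[T_1^Y]=\opname{ind}_T(X)=[T_0^X]-[T_1^X]$, that is $[T_1^Y]=[T_1^X]+[T_1^Z]$. Substituting this into the formula above gives $\opname{ind}_T(Y)=\opname{ind}_T(X)+\opname{ind}_T(Z)$.

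For the coindex statement I would run the dual argument: apply the index result in the opposite category $\cc^{op}$ (again $2$-Calabi--Yau with maximal rigid object $T$), under which coindex becomes index, the copresentation triangles $X\to\Sigma^2 T_X^1\to\Sigma^2 T_X^0\to\Sigma X$ become presentation triangles, and the hypothesis that $F(f)$ is injective becomes the surjectivity needed above; equivalently one dualizes each step verbatim. The point I expect to be the main obstacle is the identification of the syzygy $T_1^Y$: one must verify that $\phi$ is genuinely a right $\add T$-approximation (so that $T_1^Y\in\add T$ and $\opname{ind}_T(Y)$ is read off correctly) and that the octahedral triangle legitimately computes $\opname{ind}_T(X)$. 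Both reduce to the structure of $\opname{pr}T$ and the vanishing $\Ext^1_\cc(T,\add T)=0$, so nothing beyond rigidity of $T$ and the equivalence $F$ established earlier is required.
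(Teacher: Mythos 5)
Your proof is correct, and it is essentially the approach behind the paper's proof: the paper itself offers no argument, only the citation to Proposition~2.2 of~\cite{Palu}, and the standard proof of that proposition is exactly the lifting-plus-octahedron computation you carry out (lift $p_Z$ along $g$ using surjectivity of $F(g)$, approximate $Y$ by $T_0^X\oplus T_0^Z$, and identify the syzygy $T_1^Y$ via the triangle $T_1^Y\to T_0^X\oplus T_1^Z\to X\to\Sigma T_1^Y$ produced by the octahedral axiom). Your reconstruction has the additional merit of making explicit why the citation is legitimate here: Palu assumes $T$ is cluster-tilting, which fails in cluster tubes, and your argument shows that rigidity of $T$ together with the hypotheses $X,Y,Z\in\opname{pr}T$ (resp.\ $\opname{pr}\Sigma T$) suffices. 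Two steps you assert do require (short, standard) justification: that the cocone $T_1^Y$ of the right $\add T$-approximation $\phi$ lies in $\add T$ follows from comparing the two triangles over $Y$ to exhibit $\Sigma T_1^Y$ as a retract of $\Sigma T_1^X\oplus\Sigma T_1^Z\oplus\Sigma T_0^X\oplus\Sigma T_0^Z$ (Krull--Schmidt then gives $T_1^Y\in\add T$); and in the dualization, converting ``$F(f)$ injective'' into the statement that every map $X\to\Sigma^2T$ extends along $f$ (the surjectivity needed for the opposite-category argument, where coindex becomes index with respect to $\Sigma^2T$) uses Serre duality $\Hom_{\cc}(X,\Sigma^2T)\cong D\Hom_{\cc}(T,X)$ coming from the $2$-Calabi--Yau property. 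Neither point is a genuine gap.
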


\begin{proposition}~\label{p:index-coindex-via-ARtriangle}
Let $\Sigma X\xrightarrow{f}Y\xrightarrow{g}X\xrightarrow{h} \Sigma ^2X$ be an AR-triangle in $\cc$ with $X\in \mathcal{R}$ and $Y\in \opname{pr}T\cap\opname{pr}\Sigma T$.
\begin{itemize}
\item[(a)] if neither $X$ nor $\Sigma X$ belongs to $\add \Sigma T$, then 
 $0\to F(\Sigma X)\xrightarrow{F(f)}F(Y)\xrightarrow{F(g)}F(X)\to 0$ 
 is an AR-sequence of $\mod A$. Consequently, \[\opname{ind}_T(Y)=\opname{ind}_T(X)+\opname{ind}_T(\Sigma X)~ \text{and}~ \opname{coind}_T(Y)=\opname{coind}_T(X)+\opname{coind}_T(\Sigma X).\]
 \item[(b)] if $X\cong \Sigma T_k$ for some $k$, then $k\neq 1$ and $F(Y)$ is the maximal locally free factor of $F(\Sigma X)=I_k$. Moreover, the coordinate vector of $\opname{coind}_T(Y)$ with respect to  $[T_1],\cdots, [T_n]$ is $-B_T\underline{e_k}$.
 \item[(c)] if $\Sigma X\cong \Sigma T_k$ for some $k$, then $k\neq 1$ and $F(Y)$ is the maximal locally free submodule of $F(X)=P_k$. Consequently,  $\opname{coind}_T(T_k)=\opname{coind}_T(Y)+\opname{coind}_T(\Sigma^2T_k)$.
\end{itemize}
\end{proposition}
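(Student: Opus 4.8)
The plan is to analyze each of the three cases of Proposition~\ref{p:index-coindex-via-ARtriangle} by carefully tracking the image of the AR-triangle under the functor $F=\Hom_\cc(T,-)$, using the equivalence $F:\opname{pr}T/\add\Sigma T\xrightarrow{\sim}\mod A$ together with the structural results already established. Recall that in $\cc$ the AR-translation $\tau$ is isomorphic to $\Sigma$, so the AR-triangle ending in $X\in\mathcal{R}$ has the displayed form $\Sigma X\to Y\to X\to\Sigma^2 X$ with $\tau^{-1}\Sigma X\cong X$. The three cases are distinguished by whether the end terms land in $\add\Sigma T$, the subcategory killed by $F$ up to the additive quotient.

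For part~(a), the assumption is that neither $X$ nor $\Sigma X$ lies in $\add\Sigma T$. First I would argue that applying $F$ to the AR-triangle produces a short exact sequence $0\to F(\Sigma X)\xrightarrow{F(f)}F(Y)\xrightarrow{F(g)}F(X)\to 0$ in $\mod A$: since $F$ is a homological functor (cohomological on triangles), one obtains a long exact sequence, and the relevant connecting maps vanish because $\Hom_\cc(T,\Sigma^2 X)\cong D\Hom_\cc(X,T)$-type identifications together with the rigidity of $T$ and $X\in\mathcal{R}$ force the boundary terms $\Hom_\cc(T,\Sigma X)$ and $\Hom_\cc(T,\Sigma^2 X)$ to be controlled. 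Because $F$ restricted to $\opname{pr}T/\add\Sigma T$ is an equivalence sending the AR-triangle to an almost-split sequence, the resulting short exact sequence is the AR-sequence of $\mod A$ ending in the indecomposable $F(X)$. Once this short exact sequence is in hand, $F(g)$ is surjective and $F(f)$ is injective, so both bullets of Proposition~\ref{p:index-coindex-triangle} apply directly to the triangle, yielding the additivity of both index and coindex.

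For parts~(b) and~(c), the end term becomes $\Sigma T_k$, and the first observation $k\neq 1$ follows because $T_1=(1,n)$ is the unique summand of length $n$: the AR-triangle data and Lemma~\ref{l:morphism-non-vanish} (the length-$n$ vanishing/dimension-two dichotomy) rule out $k=1$, since the wing structure around $T_1$ behaves differently. In case~(b), where $X\cong\Sigma T_k$, one has $F(X)=0$ in the quotient while $F(\Sigma X)=I_k$ is the injective hull; applying $F$ to the triangle and using that the AR-sequence in $\mod A$ has $I_k$ at one end, I would identify $F(Y)$ as the maximal locally free factor of $I_k$ by splitting off the non-locally-free part (the part supported at vertex $1$ with odd rank contribution, per Remark~\ref{r:locally-free}). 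The coordinate-vector claim $\opname{coind}_T(Y)=-B_T\underline{e_k}$ I would deduce by combining Lemma~\ref{l:index-coindex-euler-form} for $\opname{coind}_T$ with Proposition~\ref{p:skew-symmetrizable-matrix-via-skew-symmetric-Euler-form}, computing $\sum_i\langle S_i,F(Y)\rangle_{\leq 1}[T_i]$ and matching it against the $k$-th column of $B_T$ via the rank vector of $I_k$. Case~(c) is dual: with $\Sigma X\cong\Sigma T_k$ one gets $F(X)=P_k$, identifies $F(Y)$ as the maximal locally free submodule, and reads off the coindex additivity $\opname{coind}_T(T_k)=\opname{coind}_T(Y)+\opname{coind}_T(\Sigma^2 T_k)$ from Proposition~\ref{p:index-coindex-triangle} after checking the requisite injectivity of the $F$-image map.

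The main obstacle I anticipate is the bookkeeping in cases~(b) and~(c) around \emph{local freeness failing at the loop vertex}. Because $e_1Ae_1\cong\C[x]/(x^2)$, the injective $I_k$ and projective $P_k$ need not themselves be locally free, so $F(Y)$ is only a maximal locally free \emph{factor} (resp.\ \emph{submodule}), not all of $I_k$ (resp.\ $P_k$); I must verify that the AR-sequence in $\mod A$ genuinely cuts out exactly this locally free part and that the truncation does not disturb the Euler-form computation. Establishing the precise coordinate vector $-B_T\underline{e_k}$ therefore requires a careful comparison of $\rankv F(Y)$ against $\dimv I_k$ using Lemma~\ref{l:dimension-vector-loop}, and this is where the identification must be done most delicately rather than by a routine application of Proposition~\ref{p:index-coindex-triangle}.
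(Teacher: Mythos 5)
Your proposal follows the paper's general architecture (apply $F$, extract exact sequences, then feed them into Proposition~\ref{p:index-coindex-triangle} and the Euler-form lemmas), but the justifications at the three decisive points do not hold up. In part~(a), you claim the connecting maps in the long exact sequence vanish because rigidity of $T$ and $2$-Calabi-Yau duality ``control'' the boundary terms $\Hom_\cc(T,\Sigma X)$ and $\Hom_\cc(T,\Sigma^2X)$. But these spaces do not vanish: $\Hom_\cc(T,\Sigma X)=F(\Sigma X)$ is an end term of the very sequence you want (it equals $I_k$ in case~(b)), and $\Hom_\cc(T,\Sigma^2X)\cong D\Hom_\cc(X,T)$ is generally nonzero for $X\in\mathcal{R}$. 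What kills the connecting \emph{morphisms} is the almost-split property of the triangle: $\Sigma X\notin\add\Sigma T$ forces $X\notin\add T$, so no map $T\to X$ is a retraction and all factor through $g$, giving surjectivity of $F(g)$; for injectivity of $F(f)$ one factors any $t:T\to\Sigma X$ with $f\circ t=0$ as $t=\Sigma^{-1}h\circ s$ and observes $\Sigma t=h\circ\Sigma s=0$ because $\Sigma s:\Sigma T\to X$ is not a retraction ($X\notin\add\Sigma T$). Your fallback, that ``$F$ sends the AR-triangle to an almost-split sequence,'' is circular --- that is precisely what part~(a) asserts. Likewise, $k\neq1$ in (b) and (c) does not come from Lemma~\ref{l:morphism-non-vanish}: it comes from the hypothesis $Y\in\opname{pr}T\cap\opname{pr}\Sigma T$ together with the explicit descriptions of $\opname{pr}T$ and $\opname{pr}\Sigma T$ by $\mathcal{F}$ and $\Sigma\mathcal{F}$; for $k=1$ the middle term contains $(n,n+1)\notin\Sigma\mathcal{F}$ (case~(b)) resp.\ $(n+1,n+1)\notin\mathcal{F}$ (case~(c)), and without using the hypothesis on $Y$ nothing rules out $k=1$.

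In part~(b) your picture of the truncation is also wrong: the part removed from $I_k$ is not ``supported at vertex $1$.'' Applying $F$ to the rotated triangle gives $P_k\xrightarrow{F(\Sigma^{-1}h)}I_k\to F(Y)\to 0$, and the almost-split property of the shifted connecting map shows $\im F(\Sigma^{-1}h)=S_k$ (supported at vertex $k\neq 1$): every $T_j\to T_k$ with $j\neq k$ is a non-retraction and composes to zero with $\Sigma^{-1}h$, while $\End_\cc(T_k)\cong\C$. Finally, the coordinate formula $\opname{coind}_T(Y)=-B_T\underline{e_k}$ cannot be obtained by directly ``computing $\sum_i\langle S_i,F(Y)\rangle_{\leq1}[T_i]$ via the rank vector of $I_k$'': since $A$ has infinite global dimension, the truncated form $\langle-,-\rangle_{\leq1}$ is \emph{not} determined by classes in $\go(\mod A)$, and $I_k$ need not be locally free, so Corollary~\ref{c:coindex-index-matrix} is unavailable --- exactly the obstacle you flag at the end but never resolve. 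The paper's mechanism is a two-step elimination: first antisymmetrize, writing $\opname{coind}_T(Y)-\opname{ind}_T(Y)=\sum_i\langle S_i,F(Y)\rangle_a[T_i]$, which \emph{is} bilinear on the Grothendieck group (via the $3$-Calabi-Yau form on $\der_{fd}(\Gamma)$), expand using $[F(Y)]=[I_k]-[S_k]$, Proposition~\ref{p:skew-symmetrizable-matrix-via-skew-symmetric-Euler-form} and Lemma~\ref{l:index-coindex-euler-form}; then eliminate the unknown $\opname{ind}_T(Y)$ via the index additivity $\opname{ind}_T(\Sigma^2T_k)=\opname{ind}_T(T_k)+\opname{ind}_T(Y)$, which follows from Proposition~\ref{p:index-coindex-triangle} and the surjectivity of $F(f):I_k\to F(Y)$. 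Only your treatment of case~(c) is essentially sound: there injectivity of $F(g)$ is automatic since $\Hom_\cc(T,\Sigma T_k)=0$ by rigidity of $T$, $\im F(h)=S_k$ is proved as in (b), and the coindex additivity then drops out of Proposition~\ref{p:index-coindex-triangle}.
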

\begin{proof}
For $(a)$, by the equivalence $F:\opname{pr} T/\add \Sigma T\to \mod A$ and Proposition~\ref{p:index-coindex-triangle}, it suffices to show that 
\[0\to F(\Sigma X)\xrightarrow{F(f)}F(Y)\xrightarrow{F(g)}F(X)\to 0\]
is a short exact sequence. Equivalently, $F(g)$ is surjective and $F(f)$ is injective.
 Note that $\Sigma X\not\in \add\Sigma T$ implies that $X\not\in \add T$. Therefore each morphism from $T$ to $X$ is not a retraction. We then deduce that $F(g):F(Y)\to F(X) $ is surjective by the fact that $g:Y\to X$ is a right almost split morphism in $\cc$.  Let $t: T\to \Sigma X$ be a morphism such that $f\circ t=0$. Then  the morphism $t$ factors through $\Sigma^{-1}h$. Namely, there is a morphism $s: T\to \Sigma^{-1}X$ fitting into the following commutative diagram
\[\xymatrix{& T\ar[d]^t\ar[dl]_s&& \Sigma T\ar[d]^{\Sigma s}\ar[dr]^{\Sigma t}\\
\Sigma^{-1}X\ar[r]^{\Sigma^{-1}h}&\Sigma X\ar[r]^f&Y\ar[r]^g&X\ar[r]^{h}&\Sigma^2X.
}
\]
Recall that we also have $X\not\in \add \Sigma T$, which implies that $\Sigma s$ is not a retraction. Therefore $\Sigma t=h\circ \Sigma s=0$. Consequently, $t=0$ and $F(f)$ is injective.

Let us consider the statement $(b)$.
  According to the description of $\opname{pr}T$ and $\opname{pr}\Sigma T$ in Section~\ref{ss:index-coindex}, it is not hard to see that $k\neq 1$ by $Y\in \opname{pr}T\cap  \opname{ pr}\Sigma T$. Applying the functor $F$, we obtain an exact sequence of $A$-modules
\[P_k\xrightarrow{F(\Sigma^{-1}h)} I_k\xrightarrow{F(f)} F(Y)\to 0.
\]
It suffices to show that $\im F(\Sigma^{-1}h)=S_k$. Note that 
\[\Sigma T_k\xrightarrow{\Sigma^{-1}f} \Sigma^{-1}Y\xrightarrow{\Sigma^{-1}g}T_k\xrightarrow{\Sigma^{-1}h}\Sigma^2 T_k\] 
is also an AR-triangle in $\cc$. In particular, for $j\neq k$ and any morphism $t:T_j\to T_k$,  the composition $\Sigma^{-1}h\circ t$ vanishes. On the other hand, we clearly have $\dim_\C\Hom_{\cc}(T_k,T_k)=1$. Therefore $\im F(\Sigma^{-1}h)=S_k$ and $F(Y)$ is the maximal locally free factor of $I_k$. Moreover, $\dimv F(Y)=\dimv I_k-\dimv S_k$.
Using Lemma~\ref{l:index-coindex-euler-form}, we have the following computation
\begin{eqnarray*}
\opname{coind}_T(Y)-\opname{ind}_T(Y)&=&\sum_{i=1}^n\langle S_i,F(Y)\rangle_a[T_i]\\
&=&-\sum_{i=1}^n\langle S_i, S_k\rangle_a[T_i]+\sum_{i=1}^n\langle S_i, I_k\rangle_a[T_i]\\
&=&-([T_1],\cdots, [T_n])B_T\underline{e_k}+\opname{coind}_T(\Sigma^2 T_k)-\opname{ind}_T(\Sigma^2 T_k).\end{eqnarray*}
On the other hand,  by Proposition~\ref{p:index-coindex-triangle},  we have
$\opname{ind}_T(\Sigma^2 T_k)=\opname{ind}_T(T_k)+\opname{ind}_T(Y)$. It follows that $\opname{coind}_T(Y)=-([T_1],\cdots, [T_n])B_T\underline{e_k}$.

For $(c)$, it is also easy to deduce that $k\neq 1$ by $Y\in \opname{pr}T\cap \opname{pr}\Sigma T$. Applying the functor $F$ to the triangle $\Sigma T_k\xrightarrow{f} Y\xrightarrow{g} T_k\xrightarrow{h}\Sigma^2 T_k$, we obtain an exact sequence
\[0\to F(Y)\xrightarrow{F(g)} P_k\xrightarrow{F(h)} I_k.
\]
Similar to the case $(b)$, one can  show that the image of $F(h)$ is the simple module $S_k$. Therefore $F(Y)$ is the maximal locally free submodule of $P_k$. The remaining statement is a direct consequence of Proposition~\ref{p:index-coindex-triangle}.
\end{proof}

For $k\neq 1$, Proposition~\ref{p:index-coindex-via-ARtriangle} gives a characterization of the maximal locally free submodule ({\it resp.} factor) of the indecomposable projective ({\it  resp.} injective) $A$-module $P_k$ ({\it resp. $I_k$}) via the structure of $\cc$. Recall that we have $T_1=(1,n)$. For the case $k=1$, we have the following.

\begin{lemma}~\label{l:exchange-triangle-1}
\begin{itemize}
\item[(1)]~$F((1, n-1)\oplus (1, n-1))$ is the maximal locally free submodule of $F((1,n))=P_1$;
\item[(2)]~$F((n+1,n-1)\oplus(n+1,n-1))$ is the maximal locally free factor of $F((n,n))=I_1$.
\end{itemize}
\end{lemma}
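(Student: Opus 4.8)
The plan is to treat the two statements in parallel, since they are dual to each other under the duality $\Sigma$ and the exchange $\mathrm{pr}\,T \leftrightarrow \mathrm{pr}\,\Sigma T$. Recall that we have fixed $T_1=(1,n)$, so the unique loop $\rho$ of $Q_T$ sits at vertex $1$, and by Remark~\ref{r:locally-free} an $A$-module $M$ is locally free precisely when $Me_1$ is a free $\C[x]/(x^2)$-module, i.e. when $[\dimv M:1]$ is even. The key structural input is Lemma~\ref{l:dimension-vector-loop}: every \emph{indecomposable} $A$-module $M$ has $[\dimv M:1]\in\{0,2\}$. Since $P_1=F((1,n))$ and $I_1=F((n,n))$ are the indecomposable projective and injective at the loop vertex, I first want to pin down their dimension vectors at vertex $1$. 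I expect $[\dimv P_1:1]=2=[\dimv I_1:1]$, so that $P_1$ and $I_1$ are \emph{themselves} locally free of rank $1$ at vertex $1$; the content of the lemma is therefore not about $P_1,I_1$ failing to be locally free, but about identifying a specific locally free submodule (resp. factor) of rank $0$ at vertex $1$ that is maximal with that property.

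For part~(1), the approach is to realize the candidate submodule via the triangle structure of $\cc$, exactly as in Proposition~\ref{p:index-coindex-via-ARtriangle}(c) but now at the loop vertex $k=1$. First I would write down the AR-triangle ending at $T_1=(1,n)$ and identify the middle term $Y$; using that $\ct$ is standard together with the wing combinatorics of Lemma~\ref{l:basic-property-cluster-tube}, I expect the relevant exchange data to produce $Y\cong (1,n-1)\oplus(1,n-1)$. The point is that removing the loop forces a splitting into two copies. Concretely, as a string module $P_1=F((1,n))$ is $M(w)$ for a string $w$ that traverses $\rho$ once; deleting $\rho$ from $w$ breaks the string into two substrings, each of which is a string for the quiver without the loop and hence gives an indecomposable locally free module, and these two pieces are isomorphic because of the symmetric way $\rho$ sits in $w$. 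I would then verify directly from the string-module description in Section~\ref{ss: endomorphism-quiver} that $F((1,n-1))$ is exactly one such piece, that $(1,n-1)\oplus(1,n-1)$ embeds in $P_1$ as a submodule with quotient $S_1$, and that $[\dimv\bigl((1,n-1)^{\oplus 2}\bigr):1]=0$ so it is locally free. Maximality follows because any strictly larger locally free submodule would have to raise the rank at vertex $1$ up to the full value $1$ (by parity, the only options for the loop-vertex rank are $0$ and $1$), hence would have to contain a vector on which $\rho$ acts nontrivially, forcing it to be all of $P_1$, which is not locally free of rank $0$ at vertex $1$.

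Part~(2) is the Matlis/$\Sigma$-dual of part~(1): applying $\Sigma$ interchanges projectives and injectives, submodules and factor modules, and sends $(1,n)$ to $(n,n)$ and $(1,n-1)$ to $(n+1,n-1)$ (up to the identification of $\ct$-objects with $\cc$-objects and the shift $\tau\xrightarrow{\sim}\Sigma$). So I would run the same argument with the AR-triangle \emph{starting} at $\Sigma T_1$, invoking Proposition~\ref{p:index-coindex-via-ARtriangle}(b) in spirit at $k=1$, to identify the maximal locally free \emph{factor} of $I_1=F((n,n))$ as $F\bigl((n+1,n-1)^{\oplus 2}\bigr)$, with kernel the simple $S_1$. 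Alternatively, and perhaps more cleanly, I would deduce (2) from (1) directly by applying the duality $D=\Hom_\C(-,\C)$ together with the Calabi-Yau symmetry, transporting the submodule statement for $P_1$ to a factor statement for $I_1$.

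The main obstacle I anticipate is the identification of the two summands and in particular the appearance of two \emph{isomorphic} copies of $(1,n-1)$ rather than a single copy or two non-isomorphic pieces; this is precisely where the special nature of the loop vertex (as opposed to the ordinary $3$-cycle vertices handled in Proposition~\ref{p:index-coindex-via-ARtriangle}) enters, and it is what makes the rank at vertex $1$ jump by the full amount $1$ in a single step. Making the embedding $(1,n-1)^{\oplus 2}\hookrightarrow P_1$ explicit at the level of string modules, and checking that the $\rho$-action genuinely mixes the two copies so that no larger locally free submodule exists, will require carefully tracking the basis vectors $z_i$ in the construction of $M(w)$; the rest is routine parity bookkeeping via Remark~\ref{r:locally-free} and Lemma~\ref{l:dimension-vector-loop}.
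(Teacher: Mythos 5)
Your parity framing at the loop vertex is sound: $P_1e_1=e_1Ae_1$ and $I_1e_1=D(e_1Ae_1)$ are indeed free of rank one, so $P_1$ and $I_1$ are themselves locally free, and your maximality argument (a locally free submodule $N\subseteq P_1$ has $Ne_1$ free of rank $0$ or $1$; rank $1$ forces $Ne_1=P_1e_1\ni e_1$, hence $N=P_1$; rank $0$ forces $N$ into the part of $P_1$ killed by $e_1$) makes explicit exactly what the paper leaves implicit after its dimension count. But the construction step of your proof fails as stated. The triangle you propose to imitate from Proposition~\ref{p:index-coindex-via-ARtriangle} does not do what you expect at $k=1$: the AR-triangle ending at $T_1=(1,n)$ has middle term $(n+1,n+1)\oplus(1,n-1)$, not $(1,n-1)^{\oplus 2}$, and the summand $(n+1,n+1)$ has length $n+1$, is not rigid, and lies outside $\opname{pr}T\cap\opname{pr}\Sigma T$ (it violates $a+b\le 2n+1$) --- this is precisely why Proposition~\ref{p:index-coindex-via-ARtriangle}(b),(c) force $k\neq 1$ and why this lemma needs a separate proof. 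The paper instead uses the \emph{exchange} triangle $\Sigma T_1\to(1,n-1)^{\oplus 2}\xrightarrow{g}T_1\xrightarrow{h}\Sigma^2T_1$ attached to the mutation of $(1,1)\oplus\cdots\oplus(1,n)$ at $(1,n)$ (from~\cite{ZhouZhu14}); the multiplicity $2$ comes from $g$ being a minimal right $\add\bigl((1,1)\oplus\cdots\oplus(1,n-1)\bigr)$-approximation, not from AR theory. Applying $F$ gives $0\to F((1,n-1)^{\oplus2})\to P_1\xrightarrow{F(h)}I_1$, and the paper finishes by showing $\dimv\im F(h)=2\underline{e_1}$ via a factorization argument: every morphism $T_i\to T_1$ with $i\neq 1$ factors through $g$ because $T_i\in\cw_{T_1}$, while no nonzero endomorphism of $T_1$ does.

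There is also a genuine computational error in your identification, inconsistent with your own bookkeeping: since $[\dimv P_1:1]=2$ and $[\dimv F((1,n-1))^{\oplus2}:1]=0$, the quotient $P_1/F((1,n-1)^{\oplus2})$ has dimension vector $2\underline{e_1}$; it is the two-dimensional uniserial module $e_1Ae_1\cong\C[x]/(x^2)$ (a non-split self-extension of $S_1$), not the simple $S_1$, and dually the kernel of $I_1\twoheadrightarrow F((n+1,n-1)^{\oplus2})$ in part (2) also has dimension vector $2\underline{e_1}$, not $\underline{e_1}$. Relatedly, ``deleting $\rho$ from $w$'' does not produce the submodule: removing only the letter $\rho$ from the string of $P_1$ leaves two pieces each still carrying a one-dimensional vertex-$1$ component, so one piece is not a submodule at all and the other is a submodule that is \emph{not} locally free; the maximal locally free submodule arises by deleting \emph{both} vertex-$1$ basis vectors, after which the two remaining strings do give two isomorphic copies of $F((1,n-1))$, for the symmetry reason you indicate. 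With these repairs (and a proof that the two trimmed strings really are the string of $F((1,n-1))$, which the paper's exchange-triangle route obtains for free) your string-theoretic approach could be completed, but as written both the triangle step and the quotient/kernel identification are incorrect.
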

\begin{proof}
We prove  the statement $(1)$ and the proof of  $(2)$ is similar. Let 
\[\Sigma T_1\to (1, n-1)^{\oplus 2}\xrightarrow{g} T_1\xrightarrow{h} \Sigma^2T_1\]
be one of the exchange triangles associated to the basic maximal rigid objects $M=(1,1)\oplus\cdots (1,n-1)\oplus (1,n)$ and $N=(1,1)\oplus\cdots\oplus (1,n-1)\oplus (n+1,n)$ ({\it cf.}~\cite{ZhouZhu14}).
Applying the functor $F$ to the triangle above, we obtain an exact sequence of $A$-modules
\[0\to F((1,n-1)^{\oplus 2})\xrightarrow{F(g)} P_1\xrightarrow{F(h)} I_1.
\]
It suffices to show that $\dimv\im F(h)=2\underline{e_1}$. For any $i\neq 1$, noticing that $T_i\in \cw_{T_1}$, it is not hard to show that any morphism $T_i\to T_1$ factors through the morphism $g$ ({\it cf.}~ Lemma 2.11 of ~\cite{FGL1}). On the other hand,  any non-zero morphism $t:T_1\to T_1$ does not factor through $g$ and we conclude that $\dimv \im F(h)=2\underline{e_1}$.
\end{proof}

\begin{lemma}\label{l:equality-cc-map}
 Let $1\leq i<n$.
 We have \[\opname{ind}_T(i,n+1)=\opname{ind}_T(i+1, n-1)~\text{and}~ \opname{coind}_T(i,n+1)=\opname{coind}_T(i+1, n-1).\] Consequently, $\mathbb{X}^T_{(i,n+1)}=\mathbb{X}^T_{(i+1,n-1)}$.
\end{lemma}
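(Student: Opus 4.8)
The plan is to prove the index identity $\opname{ind}_T(i,n+1)=\opname{ind}_T(i+1,n-1)$ and to read off the coindex identity from it. Both objects lie in $\opname{pr}T\cap\opname{pr}\Sigma T$ (the first because $(i,n+1)$ lies in the set $\mathcal{F}$ describing $\opname{pr}T$ and in $\Sigma\mathcal{F}$ describing $\opname{pr}\Sigma T$, the second because it is rigid, hence in $\mathcal{R}\subseteq\opname{pr}T\cap\opname{pr}\Sigma T$), so both index and coindex are defined. Write $M=F((i,n+1))$ and $M'=F((i+1,n-1))$. By Lemma~\ref{l:comparision-dimension} these are locally free with $\dimv M=\dimv M'$, whence $\rankv M=\rankv M'$ by Remark~\ref{r:locally-free}. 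Corollary~\ref{c:coindex-index-matrix} then gives
\[
\opname{coind}_T(i,n+1)-\opname{ind}_T(i,n+1)=B_T\rankv M=B_T\rankv M'=\opname{coind}_T(i+1,n-1)-\opname{ind}_T(i+1,n-1),
\]
so the two asserted equalities are equivalent and it suffices to prove the one for the index.

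Since $M,M'\neq 0$ (their rank vectors are nonzero at vertex $1$), neither object is isomorphic to any $\Sigma T_k$, so Lemma~\ref{l:index-coindex-euler-form} reduces the index identity to the equality of truncated Euler forms
\[
\langle M,S_j\rangle_{\leq 1}=\langle M',S_j\rangle_{\leq 1}\qquad(1\leq j\leq n);
\]
equivalently, $M$ and $M'$ have the same $g$-vector. By Lemma~\ref{l:comparision-dimension} they are the string modules $M=M(a_1\cdots a_s\,\rho\,a_{s+1}\cdots a_r)$ and $M'=M(a_1\cdots a_s\,\rho^{-1}\,a_{s+1}\cdots a_r)$, differing only in the orientation of the unique loop $\rho$ at vertex $1$.

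I would compute the $g$-vector from a minimal projective presentation $P^1\to P^0\to N\to 0$, for which $\langle N,S_j\rangle_{\leq 1}=[P^0:P_j]-[P^1:P_j]$, where $[{-}:P_j]$ denotes the multiplicity of the indecomposable projective $P_j$. Reading tops and socles off the string regarded as a walk, the flip $\rho\leftrightarrow\rho^{-1}$ only reverses the relative height of the two vertex-$1$ basis vectors flanking $\rho$, hence changes the top (and the socle) by a single copy of $S_1$ and nothing else. The crucial point is that this is compensated in the first syzygy: the extra (or missing) top summand $S_1$ produces an extra (or missing) summand $P_1$ of $P^1$, whose top is again $S_1$. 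Thus $[P^0:P_j]-[P^1:P_j]$ is unchanged for every $j$ — for $j\neq 1$ both multiplicities are unaffected, while for $j=1$ the two changes cancel. This is exactly the behaviour in Example~\ref{ex:string-module}: $M(w_1)\cong P_2$ is projective, while $M(w_2)$ has minimal projective presentation $P_1\to P_1\oplus P_2\to M(w_2)\to 0$, and both yield the $g$-vector $(0,1,0)^{\opname{tr}}$, i.e. index $[T_2]$.

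The main obstacle is this vertex-$1$ compensation in general: in contrast to the case $j\neq 1$, the Hom-term $\dim_\C\Hom_A(M,S_1)$ genuinely differs between $M$ and $M'$, so one must show that the Ext-term $\dim_\C\Ext^1_A(M,S_1)=[\,\opname{top}\Omega M:S_1\,]$ differs by the same amount. I would settle this by a short case analysis of the two letters $a_s,a_{s+1}$ adjacent to $\rho$ (together with the boundary cases $s=0$ and $s=r$), using the gentle structure of $A$ from Theorem~\ref{t:algebra-maximal-tube} and the local shape of $Q_T\in\mathcal{Q}_n$ at the loop vertex to control the projective cover and first syzygy around $\rho$. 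As a consistency check, the almost split sequence $0\to(i,n)\to(i,n+1)\oplus(i+1,n-1)\to(i+1,n)\to 0$ of $\ct$ yields an AR-triangle in $\cc$ to which Proposition~\ref{p:index-coindex-via-ARtriangle}(a) applies, giving $\opname{ind}_T(i,n+1)+\opname{ind}_T(i+1,n-1)=\opname{ind}_T(i,n)+\opname{ind}_T(i+1,n)$; this determines only the sum, which is why the finer module computation above is needed. Finally, the coindex identity follows from the first displayed equivalence, and then $\mathbb{X}^T_{(i,n+1)}=\mathbb{X}^T_{(i+1,n-1)}$ is immediate: the prefactors $x^{-\opname{coind}_T(\cdot)}$ agree by the coindex identity, while Corollary~\ref{c:equality-cc-map} gives $\chi(\opname{Gr}_{\underline{e}}^{lf}(M))=\chi(\opname{Gr}_{\underline{e}}^{lf}(M'))$ for every $\underline{e}$, so the two Caldero-Chapoton expansions coincide term by term.
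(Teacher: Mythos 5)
Your reductions are all sound: both objects lie in $\opname{pr}T\cap\opname{pr}\Sigma T$, Lemma~\ref{l:comparision-dimension} plus Corollary~\ref{c:coindex-index-matrix} makes the index and coindex identities equivalent, and Lemma~\ref{l:index-coindex-euler-form} correctly converts the index identity into equality of the truncated Euler forms $\langle -,S_j\rangle_{\leq 1}$, i.e.\ of $g$-vectors computed from minimal projective presentations. But the crux of your argument --- that flipping $\rho\leftrightarrow\rho^{-1}$ leaves $[P^0:P_j]-[P^1:P_j]$ unchanged --- is precisely the step you defer to ``a short case analysis,'' and the heuristic you offer for it is not uniformly correct. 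Whether the flip changes the top by a copy of $S_1$ depends on the directions of the two flanking letters: with the direct letter $\rho$ the lower vertex-$1$ basis vector is never in the top and the upper one is in the top exactly when the next letter is inverse (or absent), while with $\rho^{-1}$ the roles swap and the condition involves the previous letter instead. In the mixed configurations (one flanking letter direct, the other inverse) the two orientations give the \emph{same} $S_1$-multiplicity in the top, so there is no ``extra $P_1$ in $P^1$'' to invoke and the difference between the two modules sits deeper in the radical filtration; one would have to either rule out these configurations for the specific strings of Lemma~\ref{l:comparision-dimension} or handle them separately. So as written the proposal has a genuine gap at its central point: the route is plausible (and your check against Example~\ref{ex:string-module} is correct), but the essential computation is neither carried out nor correctly anticipated.

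It is worth noting that the paper avoids all syzygy computations, and you came within one observation of its proof. You derived from the AR-triangle, via Proposition~\ref{p:index-coindex-via-ARtriangle}(a), the relation $\opname{ind}_T(i,n+1)+\opname{ind}_T(i+1,n-1)=\opname{ind}_T(i,n)+\opname{ind}_T(i+1,n)$ and dismissed it because ``it determines only the sum.'' The missing idea is a \emph{second} triangle with the same outer terms, $(i,n)\to (i+1,n-1)^{\oplus 2}\to (i+1,n)\to \Sigma(i,n)$ (an exchange-type triangle). Applying $F$ and using exactly your dimension count $\dimv F((i,n+1))=\dimv F((i+1,n-1))$ from Lemma~\ref{l:comparision-dimension}, the induced sequence $0\to F((i,n))\to F((i+1,n-1))^{\oplus 2}\to F((i+1,n))\to 0$ is short exact, so Proposition~\ref{p:index-coindex-triangle} gives $2\opname{ind}_T(i+1,n-1)=\opname{ind}_T(i,n)+\opname{ind}_T(i+1,n)$, and likewise for the coindex. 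Subtracting this from the AR-triangle relation pins down the individual terms and yields both identities at once; Corollary~\ref{c:equality-cc-map} then gives $\mathbb{X}^T_{(i,n+1)}=\mathbb{X}^T_{(i+1,n-1)}$ exactly as in your final step, which is correct.
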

\begin{proof}
  For each $1\leq i<n$, we have the following two triangles
 \[(i,n)\to (i+1,n-1)\oplus (i,n+1)\to (i+1,n)\to \Sigma (i,n) \]
 \[(i,n)\to (i+1,n-1)^{\oplus 2}\to (i+1,n)\to \Sigma (i,n).
 \]
 Note that the first triangle is an AR-triangle in $\cc$.  By  Proposition~\ref{p:index-coindex-via-ARtriangle} (a), we know that 
 \[0\to F((i,n))\to F((i+1,n-1))\oplus F((i,n+1))\to F((i+1,n))\to 0
 \]
 is a short exact sequence in $\mod A$. Moreover, \[\opname{ind}_T(i+1.n-1)+\opname{ind}_T (i,n+1)=\opname{ind}_T(i,n)+\opname{ind}_T(i+1,n)\] and \[\opname{coind}_T(i+1.n-1)+\opname{coind}_T(i,n+1)=\opname{coind}_T(i,n)+\opname{coind}_T(i+1,n).\] 
 
 Applying the functor $F$ to the second triangle yields a long exact sequence
 \[\cdots\to F((i,n))\to F((i+1,n-1)^{\oplus 2})\to F((i+1,n))\to\cdots. 
 \]
 According to Lemma~\ref{l:comparision-dimension}, we have $\dimv F((i+1,n-1))=\dimv F((i,n+1))$. In particular, 
 \[0\to F((i,n))\to F((i+1,n-1)^{\oplus 2})\to F((i+1,n))\to 0
 \]
 is also a short exact sequence. Consequently, 
 \[2\opname{ind}_T(i+1,n-1)=\opname{ind}_T(i,n)+\opname{ind}_T(i+1,n)\] and \[2\opname{coind}_T(i+1,n-1)=\opname{coind}_T(i,n)+\opname{coind}_T(i+1,n),\]which imply the desired equalities. By Corollary~\ref{c:equality-cc-map}, we have $\mathbb{X}^T_{(i,n+1)}=\mathbb{X}^T_{(i+1,n-1)}$.
 \end{proof}

\section{Grassmannians of locally free submodules}~\label{s:grassmanian}
Throughout this section, we fix a basic maximal rigid object $T=\bigoplus_{i=1}^nT_i$ of $\cc$ with $T_1=(1,n)$. Denote by $A=\End_{\cc}(T)$ the endomorphism algebra of $T$. Recall that we have an equivalence $F:\opname{pr} T/\add \Sigma T\to \mod A$. The aim of this section is to establish certain basic properties for Grassmannians of locally free submodules.

\subsection{Identification of $A$-modules with representations}Let $Q=(Q_0,Q_1)$ be the opposite Gabriel quiver of $A$ with $Q_0=\{1,\cdots,n\}$ the set of vertices and $Q_1$ the set of arrows. 
 By the assumption $l(T_1)=n$, we know that vertex $1$ is the unique vertex associated to the unique loop $\rho$. Let $I$ be the ideal of the path algebra $kQ$ generated by the square of the unique loop and all paths of length $2$ in a $3$-cycle. Then $\mod A$ is equivalent to the category $\opname{rep}(Q, I)$ of finite dimensional representations of the quiver with relation $(Q, I)$. In the following, we will identify an $A$-module with a representation of $(Q, I)$. 

Let $X$ be an indecomposable object in $\opname{pr}T\backslash \add \Sigma T$ such that $F(X)$ is locally free. By abuse of notations, we still denote by $F(X)=(F(X)_i, F(X)_\alpha)_{i\in Q_0, \alpha\in Q_1}$ the representation in $\opname{rep}(Q, I)$ which corresponds to the module $F(X)\in \mod A$. It is not hard to see  that $F(X)_i=\Hom_{\cc}(T_i,X)$. For any arrow $\alpha:i\to j$ in $Q$, there is a morphism $\alpha: T_j\to T_i$ in $\cc$, then $F(X)_\alpha=\Hom_{\cc}(\alpha, X)$.
Moreover, if $\Hom_{\cc}(T_1,X)\neq 0$, then $\dim_\C\Hom_{\cc}(T_1, X)=2$. In this case, $F(X)_1$ admits a canonical decomposition of one-dimensional vector spaces as follows:
\begin{eqnarray*}
F(X)_1&=&\Hom_{\der^b(\ct)}(T_1, X)\oplus \Hom_{\der^b(\ct)}(\tau\circ \Sigma^{-1}T_1, X),\\
F(X)_{\rho}&=&\Hom_{\der^b(\ct)}(T_1, X)\oplus \Hom_{\der^b(\ct)}(\tau\circ \Sigma^{-1}T_1, X)\\
&&\xrightarrow{\tiny\begin{bmatrix}0&0\\ \Hom_{\cc}(\rho, X)&0\end{bmatrix}}\Hom_{\der^b(\ct)}(T_1, X)\oplus \Hom_{\der^b(\ct)}(\tau\circ \Sigma^{-1}T_1, X).
\end{eqnarray*}
We remark that $\Hom_{\cc}(\rho, X)$ is a linear isomorphism of one-dimensional $\C$-vector spaces. 
Let $Y$ be an indecomposable object of $\opname{pr}T\backslash \add\Sigma T$ such that $F(Y)$ is locally free with $\dim_\C\Hom_\cc(T_1,Y)=2$ and $f:X\to Y$ a $\ct$-morphism. Then $f$ induces a homomorphism of representations $F(f):F(X)\to F(Y)$ such that the linear map $F(f)_1$ has the following presentation
\begin{eqnarray*}F(f)_1&=&\Hom_{\der^b(\ct)}(T_1,X)\oplus \Hom_{\der^b(\ct)}(\tau\Sigma^{-1}T_1,X)\\&&\xrightarrow{\tiny\begin{bmatrix}\Hom_{\der^b(\ct)}(T_1,f)&0\\ 0&\Hom_{\der^b(\ct)}(\tau\Sigma^{-1}T_1, f)\end{bmatrix}}\Hom_{\der^b(\ct)}(T_1,Y)\oplus \Hom_{\der^b(\ct)}(\tau\Sigma^{-1}T_1,Y).
\end{eqnarray*}
Similarly, if $g:X\to Y$ is a $\der$-morphism, we have
\begin{eqnarray*}F(g)_1&=&\Hom_{\der^b(\ct)}(T_1,X)\oplus \Hom_{\der^b(\ct)}(\tau\Sigma^{-1}T_1,X)\\&&\xrightarrow{\tiny\begin{bmatrix}0&0\\ \Hom_{\der^b(\ct)}(T_1,g)&0\end{bmatrix}}\Hom_{\der^b(\ct)}(T_1,Y)\oplus \Hom_{\der^b(\ct)}(\tau\Sigma^{-1}T_1,Y).
\end{eqnarray*}
\subsection{A $\mathbb{C}^{\times}$-action on Grassmannians of locally free submodules}
Let $M$ be a locally free $A$-module and $
\underline{g}$ an integer vector in $\N^n$. Recall that there is a stratification of morphisms in $\cc$ given by $\ct$-morphisms and $\mathcal{D}$-morphisms.
 The aim of this subsection is to show that the stratification of morphisms in $\cc$ yields a $\mathbb{C}^{\times}$-action on the Grassmannian $\opname{Gr}_{\underline{g}}^{lf}(M)$ of locally free submodules.  
 
 Let us first recall an interpretation of points of $\opname{Gr}_{\underline{g}}^{lf}(M)$ as morphisms. Fix a $U\in \opname{Gr}_{\underline{g}}^{lf}(M)$. For each $U'\in \opname{Gr}_{\underline{g}}^{lf}(M)$ such that $U'\cong U$, the point $U'$ can be represented by an injective morphism $\varphi:U\to M$ of $A$-modules such that $\im \varphi=U'$. Moreover, if $\varphi':U\to M$ is another injective morphism with $\im \varphi'=U'$, then there exists an automorphism $\phi: U\to U$ such that $\varphi'=\varphi\circ \phi$. 
 
  Denote by $\hat{U}$ and $\hat{M}$ the preimages of $U$ and $M$ in $\opname{pr}T\backslash\add\Sigma T$ respectively. Recall that we have an equivalence $F:\opname{pr}T/\add\Sigma T\to \mod A\cong \rep(Q,I)$. In the following, we identify $M$ with $F(\hat{M})$. Let $h_U:\hat{U}\to \hat{M}$ be a morphism in $\cc$ such that $F(h_U)$ is injective with $\im F(h_U)=U\subset M=F(\hat{M})$. Since each morphism in $\cc$ can be written as a sum of a $\ct$-morphism with a $\der$-morphism, we may write $h_U=h_{U,0}+h_{U,1}$, where $h_{U,0}:\hat{U}\to \hat{M}$ is a $\ct$-morphism and $h_{U,1}:\hat{U}\to \hat{M}$ is a $\der$-morphism. We then define
\begin{eqnarray*}
\bullet: &\C^\times \times \opname{Gr}_{\underline{g}}^{lf}(M)&\to \opname{Gr}_{\underline{g}}^{lf}(M)\\
&(c, U)&\mapsto \im F(h_{U,0}+ch_{U,1}).
\end{eqnarray*}

\begin{lemma}~\label{l:c-action}
The map $\bullet$ defines a $\C^\times$-action on $\opname{Gr}_{\underline{g}}^{lf}(M)$.
\end{lemma}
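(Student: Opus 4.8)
The plan is to recognise the map $\bullet$ as the action of a single one-parameter subgroup (cocharacter) of $\GL$ on the ambient module, coming from a $\Z$-grading that is hidden in the $\ct$/$\der$ stratification of morphisms. First I would equip, for every object $X\in\opname{pr}T\setminus\add\Sigma T$ with $F(X)$ locally free and every vertex $i$, the space $F(X)_i=\Hom_\cc(T_i,X)$ with the \emph{weight decomposition} in which the $\ct$-part $\Hom_{\der^b(\ct)}(T_i,X)$ has weight $0$ and the $\der$-part $\Hom_{\der^b(\ct)}(T_i,\tau^{-1}\Sigma X)$ has weight $1$; at the vertex $1$ this is exactly the canonical decomposition recorded at the start of Section~\ref{s:grassmanian}. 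For $c\in\C^\times$ let $\lambda_X(c)\colon F(X)\to F(X)$ be the vector-space automorphism acting by $c^{w}$ on the weight-$w$ part at each vertex. Note that $\lambda_X(c)$ is \emph{not} an $A$-module automorphism, since it does not commute with the loop $\rho$, whose induced action is weight-raising; this is precisely what makes the action non-trivial.

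The key computation is that, under this grading, $F$ sends $\ct$-morphisms to weight-preserving maps and $\der$-morphisms to maps of weight $+1$. This follows at once from the composition rules recalled in Section~\ref{s:preliminaries} ($\ct\circ\ct=\ct$, $\ct\circ\der=\der$, $\der\circ\ct=\der$, $\der\circ\der=0$), applied to $\phi\mapsto h\circ\phi$. Writing $h_U=h_{U,0}+h_{U,1}$ with $h_{U,0}$ a $\ct$-morphism and $h_{U,1}$ a $\der$-morphism, conjugating $F(h_U)=F(h_{U,0})+F(h_{U,1})$ by the graded operators then gives the identity of vector-space maps
\[
F(h_{U,0}+c\,h_{U,1})=\lambda_{\hat M}(c)\circ F(h_U)\circ \lambda_{\hat U}(c)^{-1}.
\]
Taking images and using that $\lambda_{\hat U}(c)$ is bijective yields the clean formula
\[
c\bullet U=\im F(h_{U,0}+c\,h_{U,1})=\lambda_{\hat M}(c)\bigl(U\bigr).
\]
This single formula does most of the work: its right-hand side depends only on $U$ and $c$, so $\bullet$ is independent of the chosen representative $h_U$ and of its $\ct$/$\der$ splitting; and since $\lambda_{\hat M}(1)=\id$ and $\lambda_{\hat M}(c)\,\lambda_{\hat M}(c')=\lambda_{\hat M}(cc')$, the axioms $1\bullet U=U$ and $c\bullet(c'\bullet U)=(cc')\bullet U$ are immediate.

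It remains to confirm that $c\bullet U$ really lies in $\opname{Gr}_{\underline g}^{lf}(M)$. That it is a submodule is automatic, being the image of the $A$-module homomorphism $F(h_{U,0}+c\,h_{U,1})$; injectivity of this homomorphism follows from the conjugation identity (a composite of two bijections with the injection $F(h_U)$), so $c\bullet U$ has the same dimension vector as $U$. For local freeness, by Remark~\ref{r:locally-free} it suffices to treat the vertex $1$: writing the action of $\rho$ on $M_1$ in the canonical decomposition as the off-diagonal isomorphism and $\lambda_{\hat M}(c)_1$ as $\mathrm{diag}(1,c)$, a direct check gives $\rho\circ\lambda_{\hat M}(c)_1=\rho$ on $M_1$. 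Hence $\rho$ restricted to $(c\bullet U)_1=\lambda_{\hat M}(c)_1(U_1)$ has the same image, and therefore the same rank, as $\rho$ restricted to $U_1$; since $U_1$ is free over $e_1Ae_1\cong\C[x]/(x^2)$, so is $(c\bullet U)_1$, and $c\bullet U$ is locally free with $\rankv(c\bullet U)=\underline g$.

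The main obstacle is conceptual rather than computational: one must notice that the vertexwise $\ct$/$\der$ splitting assembles into a global grading compatible with $F$, so that the ad hoc scaling defining $\bullet$ is nothing but the cocharacter $\lambda_{\hat M}(-)\colon\C^\times\to\GL(M)$. The one genuinely delicate point is that $\lambda_{\hat M}(c)$ sends locally free submodules to submodules even though it is not an $A$-module map; but this is exactly what the conjugation identity secures, by exhibiting each $c\bullet U$ as an honest image of a module homomorphism.
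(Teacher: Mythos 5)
Your proof is correct, and it takes a genuinely different route from the paper's, although both rest on the same composition rules for $\ct$- and $\der$-morphisms. The paper establishes the two required facts separately and at the level of morphisms: injectivity of $F(h_{U,0}+ch_{U,1})$ via the substitution trick (from $(h_{U,0}+ch_{U,1})g=0$ it extracts $h_{U,0}g_0=0$ and $ch_{U,1}g_0+h_{U,0}g_1=0$, then applies injectivity of $F(h_U)$ to $cg_0+g_1$), and independence of the choice of $h_U$ by lifting an automorphism $\phi$ of $F(\hat{U})$ to a morphism $t=t_0+t_1$ in $\cc$ and tracking a correction term $s$ factoring through $\add\Sigma T$. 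You instead assemble the vertexwise $\ct$/$\der$ decompositions into a global weight grading, note that $F$ sends $\ct$-morphisms to weight-preserving maps and $\der$-morphisms to weight-$(+1)$ maps, and derive the single conjugation identity $F(h_{U,0}+c\,h_{U,1})=\lambda_{\hat{M}}(c)\circ F(h_U)\circ\lambda_{\hat{U}}(c)^{-1}$, whence $c\bullet U=\lambda_{\hat{M}}(c)(U)$. That one formula simultaneously delivers injectivity (composite of two bijections with an injection), independence of the representative $h_U$ (the right-hand side visibly depends only on $U$ and $c$, so the paper's lifting argument disappears entirely), and the action axioms from the cocharacter property $\lambda_{\hat{M}}(c)\lambda_{\hat{M}}(c')=\lambda_{\hat{M}}(cc')$, which the paper does not even verify explicitly. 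It also makes manifest that $\bullet$ is the restriction of an algebraic linear $\C^{\times}$-action on the ambient variety, a point left implicit in the paper but relevant to the later use of the action in Lemma~\ref{l:free-action} and Proposition~\ref{p:equality-Euler-char}. Two minor remarks: the $\ct$/$\der$ splitting of a given morphism is unique (it comes from the direct sum decomposition of $\Hom$-spaces in the orbit category), so independence ``of the splitting'' is vacuous; and your final local-freeness check at vertex $1$ is redundant, since injectivity already gives $c\bullet U\cong F(\hat{U})\cong U$ as $A$-modules, hence locally free with $\rankv(c\bullet U)=\underline{g}$ --- though your direct computation $\rho\circ\lambda_{\hat{M}}(c)_1=\rho$ is correct as stated.
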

\begin{proof}
It suffices to show that $\im F(h_{U,0}+ch_{U,1})\in \opname{Gr}_{\underline{g}}^{lf}(M)$ and the definition is independent of the choice of the morphism $h_U$.

 In order to prove $\im F(h_{U,0}+ch_{U,1})\in \opname{Gr}_{\underline{g}}^{lf}(M)$, it suffice to show that $F(h_{U,0}+ch_{U,1})$ is injective. Let $g:=g_0+g_1:T_i\to \hat{U}$ be a morphism such that $(h_{U,0}+ch_{U,1}) g=0$, where $g_0$ is a $\ct$-morphism and $g_1$ is a $\der$-morphism. Note that the composition of two $\der$-morphisms is zero. We have
\[0=(h_{U,0}+ch_{U,1})g=h_{U,0}g_0+ch_{U,1}g_0+h_{U,0}g_1.
\]
In particular, $h_{U,0}g_0=0$ and $ch_{U,1}g_0+h_{U,0}g_1=0$.  Consequently, $h_{U,0}(cg_0)=0$ and $h_{U,1}(cg_0)+h_{U,0}g_1=0$, which implies that $(h_{U,0}+h_{U,1})(cg_0+g_1)=0$. Note that, as $F(h_{U,0}+h_{U,1})$ is injective, we have $cg_0+g_1=0$ and hence $g_0=0=g_1$.
That is, $F(h_{U,0}+ch_{U,1})$ is injective.

Let $f_U=f_{U,0}+f_{U,1}:\hat{U}\to \hat{M}$ be another morphism such that $F(f_U)$ is injective with $\im F(f_U)=U$, where $f_{U,0}$ is a $\ct$-morphism and $f_{U,1}$ is a $\der$-morphism. We need to show that $\im F(f_{U,0}+cf_{U,1})=\im F(h_{U,0}+ch_{U,1})$. Note that $\im F(f_U)=\im F(h_U)$ implies that there is an automorphism $\phi:F(\hat{U})\to F(\hat{U})$ such that $F(f_U)=F(h_U)\circ \phi$. Let $t=t_0+t_1:\hat{U}\to \hat{U}$ be a lift of $\phi$, where $t_0$ is a $\ct$-morphism and $t_1$ is a $\der$-morphism. Namely, $F(t)=\phi$.  
Then we obtain
\[F(f_U)=F(h_U)F(t)=F(h_Ut),
\]
which implies that there exists a morphism $s:\hat{U}\to\hat{M}$ factorizing through $\add \Sigma T$ such that $f_U=h_Ut+s$. We can also rewrite $s$ as $s=s_0+s_1$ such that $s_0$ is a $\ct$-morphism and $s_1$ is a $\der$-morphism. It is clear that both $s_0$ and $s_1$ factor through $\add \Sigma T$. Putting all of these together, we compute
\[f_{U,0}+f_{U,1}=(h_{U,0}+h_{U,1})(t_0+t_1)+s_0+s_1=(h_{U,0}t_0+s_0)+(h_{U,0}t_1+h_{U,1}t_0+s_1).
\]
In particular, $f_{U,0}=h_{U,0}t_0+s_0$ and $f_{U,1}=h_{U,0}t_1+h_{U,1}t_0+s_1$. Consequently, \[\im F(f_{U,0}+cf_{U,1})=\im F(h_{U,0}t_0+s_0+ch_{U,0}t_1+ch_{U,1}t_0+cs_1)=\im F(h_{U,0}t_0+ch_{U,0}t_1+ch_{U,1}t_0).\]
Recall that $\phi=F(t_0+t_1)$ is an isomorphism of $F(\hat{U})$. Similar to the above discussion, one can show that $F(t_0+ct_1)$ is also an isomorphism of $F(\hat{U})$. Hence we have 
\[\im F(f_{U,0}+cf_{U,1})=\im F((h_{U,0}+ch_{U,1})(t_0+ct_1))=\im F(h_{U,0}+ch_{U,1})F(t_0+ct_1)=\im F(h_{U,0}+ch_{U,1}).
\]
\end{proof}

\subsection{An auxiliary Grassmannian associated to AR-sequences}
Let $0\to L\xrightarrow{\iota}M\xrightarrow{\kappa}N\to 0$ be an AR sequence of $\mod A$ such that $N$ is $\tau_A$-rigid.  By Proposition~\ref{p:tau-rigid}, we may assume that $N=F((a,b))$ for some $1\leq b\leq n$ such that neither $(a,b)$ nor $(a-1,b)$ belongs to $\add \Sigma T$. Consequently, the AR sequence is the image of the AR triangle
\[(a-1,b)\xrightarrow{\tiny\begin{bmatrix}\hat{\iota}_1\\ \hat{\iota}_2\end{bmatrix}} (a-1,b+1)\oplus (a,b-1)\xrightarrow{\tiny\begin{bmatrix}\hat{\kappa}_1&\hat{\kappa}_2\end{bmatrix}} (a,b)\to \Sigma (a-1,b)
\]
of $\cc$ by Proposition~\ref{p:index-coindex-via-ARtriangle} (a). In particular, both $L$ and $M$ are locally free.
Note that each AR triangle of $\cc$ is the image of an AR triangle of $\der^b(\ct)$ under the projection $\pi:\der^b(\ct)\to \cc$. Therefore, without loss of generality, we may assume that $\hat{\iota}_1, \hat{\iota}_2$, $\hat{\kappa}_1,\hat{\kappa}_2$ are $\ct$-morphisms and hence $\iota$ and $\kappa$ are images of $\ct$-morphisms. For simplicity, we will also denote by $\hat{N}=(a,b)$, $\hat{X}=(a-1,b+1)$, $\hat{Y}=(a,b-1)$ and $\hat{M}=\hat{X}\oplus \hat{Y}$.

For each $\underline{g}\in \N^n$, we introduce the following auxiliary Grassmannian of $M$ with respect to the morphism $\iota$ and $\kappa$:
\[\opname{Gr}_{\underline{g}}^{lf}(M,\iota,\kappa):=\{ U\in \opname{Gr}_{\underline{g}}^{lf}(M)~|~\text{both $\iota^{-1}(U)$ and $\kappa(U)$ are locally free}\}.
\]
\begin{proposition}~\label{p:fibre-grassmanian}
Keep the notations as above. For each $\underline{g}\in \N^n$, consider the following morphism
\begin{eqnarray*}
\zeta_{\underline{g},{lf}}: &\opname{Gr}_{\underline{g}}^{lf}(M, \iota,\kappa)&\to \coprod_{\underline{e}+\underline{f}=\underline{g}}\opname{Gr}_{\underline{e}}^{lf}(L)\times \opname{Gr}_{\underline{f}}^{lf}(N)\\
&U&\mapsto (\iota^{-1}(U), \kappa(U)).
\end{eqnarray*}
Then the fibre $\zeta_{\underline{g},{lf}}^{-1}(B,C)$ is  an affine space isomorphic to $\Hom_A(C, L/B)$ if $(B,C)\neq (0, N)$ and $\zeta_{\underline{g},{lf}}^{-1}(0,N)=\emptyset$.
\end{proposition}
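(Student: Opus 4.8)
The plan is to run the classical fibration argument of Caldero--Chapoton and Dupont--Geiss in the locally free setting, reducing the computation of each fibre to the splitting behaviour of a pullback--pushout of the almost split sequence $\eta\colon 0\to L\xrightarrow{\iota}M\xrightarrow{\kappa}N\to 0$. First I would fix a point $(B,C)$ of the target, so that $B\subseteq L$ and $C\subseteq N$ are locally free. By definition a point $U$ of the fibre is a submodule of $M$ with $\iota^{-1}(U)=U\cap L=B$ and $\kappa(U)=C$, and such a $U$ sits in a short exact sequence $0\to B\to U\to C\to 0$. Since $B$ and $C$ are locally free and, as recalled in Section~\ref{s:main-thm}, locally free modules are closed under extensions, every such $U$ is automatically locally free, and both $\iota^{-1}(U)=B$ and $\kappa(U)=C$ are locally free. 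Hence fixing $(B,C)$ in the target imposes no further constraint: the fibre $\zeta_{\underline g,lf}^{-1}(B,C)$ is exactly the set of all submodules $U\subseteq M$ with $U\cap L=B$ and $\kappa(U)=C$, with no residual local freeness obstruction.

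Next I would identify this set with a set of splittings. Passing to the quotient $M/B$ yields an exact sequence $0\to L/B\to M/B\xrightarrow{\overline\kappa}N\to 0$, and the submodules $U\supseteq B$ with $U\cap L=B$ and $\kappa(U)=C$ correspond bijectively, and algebraically, to the submodules of $W:=\overline\kappa^{-1}(C)$ complementary to $L/B$; that is, to the sections $C\to W$ of the induced surjection in $0\to L/B\to W\to C\to 0$. The set of such sections is, when non-empty, a torsor under $\Hom_A(C,L/B)$: inside $\Hom_A(C,W)$ it is the coset of $\Hom_A(C,L/B)$ cut out by the equation $\overline\kappa\circ s=\id_C$, hence an affine subspace isomorphic, as a variety, to the affine space underlying $\Hom_A(C,L/B)$. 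This already gives the asserted isomorphism once non-emptiness is settled.

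It remains to decide when $0\to L/B\to W\to C\to 0$ splits. Its class in $\Ext^1_A(C,L/B)$ is $(\pi_B)_*(\iota_C)^*[\eta]$, where $\iota_C\colon C\hookrightarrow N$, $\pi_B\colon L\twoheadrightarrow L/B$, and $[\eta]\in\Ext^1_A(N,L)$ is the class of the almost split sequence produced by Proposition~\ref{p:index-coindex-via-ARtriangle}(a). Here I would invoke the defining properties of $\eta$. Since $N$ is indecomposable and $\kappa$ is right almost split, $\iota_C$ factors through $\kappa$ unless it is a split epimorphism, which for a submodule inclusion means $C=N$; thus $(\iota_C)^*[\eta]=0$ whenever $C\neq N$. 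Dually, since $L$ is indecomposable and $\iota$ is left almost split, $\pi_B$ factors through $\iota$ unless it is a split monomorphism, which for a quotient map means $B=0$; thus $(\pi_B)_*[\eta]=0$ whenever $B\neq 0$. Consequently $(\pi_B)_*(\iota_C)^*[\eta]=0$ for every $(B,C)\neq(0,N)$, so the fibre is a non-empty affine space isomorphic to $\Hom_A(C,L/B)$; whereas for $(B,C)=(0,N)$ the class equals $[\eta]$ itself, which is non-zero because an almost split sequence never splits, giving $\zeta_{\underline g,lf}^{-1}(0,N)=\emptyset$.

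The splitting dichotomy is the classical consequence of the almost split property and is not the delicate point. The main obstacle, specific to the present setting, is rather to confirm that the locally free constraints defining $\opname{Gr}_{\underline g}^{lf}(M,\iota,\kappa)$ carve out precisely the sections described above, which is exactly what closure of locally free modules under extensions (applied to $0\to B\to U\to C\to 0$) secures, and to check that the bijection between sections and submodules $U$ is an isomorphism of varieties, so that ``affine space'' is understood scheme-theoretically. Both are routine once the set-theoretic description is in hand, so the heart of the proof is the clean vanishing/non-vanishing of $(\pi_B)_*(\iota_C)^*[\eta]$ dictated by the almost split structure of $\eta$.
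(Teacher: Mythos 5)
Your proof is correct, but it is organized differently from the paper's. The step you and the paper share --- and the only step that is genuinely specific to this paper --- is the observation that for locally free $(B,C)\neq(0,N)$, any submodule $U\subseteq M$ with $\iota^{-1}(U)=B$ and $\kappa(U)=C$ sits in an exact sequence $0\to B\to U\to C\to 0$ and is therefore locally free by closure of locally free modules under extensions, so the local freeness conditions impose no residual constraint on the fibre. The paper exploits this by pure reduction: it views $\zeta_{\underline{g},{lf}}$ as the restriction of the morphism $\zeta_{\underline{\hat{g}}}\colon\opname{Gr}_{\underline{\hat{g}}}(M)\to\coprod_{\underline{h}+\underline{k}=\underline{\hat{g}}}\opname{Gr}_{\underline{h}}(L)\times\opname{Gr}_{\underline{k}}(N)$ between ordinary quiver Grassmannians, quotes \cite{DG} for the statement that the fibres of $\zeta_{\underline{\hat{g}}}$ are affine spaces isomorphic to $\Hom_A(C,L/B)$ (and empty over $(0,N)$), and uses the extension argument only to identify $\zeta_{\underline{\hat{g}}}^{-1}(B,C)$ with $\zeta_{\underline{g},{lf}}^{-1}(B,C)$; no sections, torsors or $\Ext$-classes appear in the paper's proof at all. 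You instead inline the content of that citation: the bijection between the fibre and the sections of $0\to L/B\to W\to C\to 0$, the torsor structure under $\Hom_A(C,L/B)$, and the splitting dichotomy for $(\pi_B)_*(\iota_C)^*[\eta]$ coming from the almost split property of $\eta$ (in the case $B\neq 0$, $C=N$ you also implicitly use that pushforward and pullback commute on $\Ext^1_A$, which is standard). What the paper's route buys is brevity, and the scheme-theoretic assertion that the fibres are affine spaces as varieties comes packaged with the reference; what your route buys is a self-contained argument in which the emptiness of the fibre over $(0,N)$ and the non-emptiness of all other fibres are visibly consequences of the almost split structure of $\eta$ rather than of a black-box citation. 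Both are complete proofs of the proposition.
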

\begin{proof}
  For each $\underline{g}\in \N^n$, consider the following morphism of varieties
\begin{eqnarray*}
\zeta_{\underline{\hat{g}}}: &\opname{Gr}_{\underline{\hat{g}}}(M)&\to \coprod_{\underline{h}+\underline{k}=\underline{\hat{g}}}\opname{Gr}_{\underline{h}}(L)\times \opname{Gr}_{\underline{k}}(N)\\
&U&\mapsto (\iota^{-1}(U), \kappa(U)).
\end{eqnarray*}
It has been proved in~\cite{DG} that the fibre $\zeta_{\underline{\hat{g}}}^{-1}(B,C)$ is an affine space isomorphic to $\Hom_A(C, L/B)$ if $(B,C)\neq (0,N)$ and $\zeta_{\underline{\hat{g}}}^{-1}(0,N)=\emptyset$.
Note that we have $\opname{Gr}_{\underline{g}}^{lf}(M,\iota,\kappa)\subseteq \opname{Gr}_{\underline{g}}^{lf}(M)\subseteq\opname{Gr}_{\underline{\hat{g}}}(M)$. Consider the restriction $\zeta_{\underline{\hat{g}}}|_{\opname{Gr}_{\underline{g}}^{lf}(M,\iota,\kappa)}$ of $\zeta_{\underline{\hat{g}}}$ to ${\opname{Gr}}_{\underline{g}}^{lf}(M,\iota,\kappa)$. It is clear that the image of $\zeta_{\underline{\hat{g}}}|_{\opname{Gr}_{\underline{g}}^{lf}(M,\iota,\kappa)}$ lies in $\coprod_{\underline{e}+\underline{f}=\underline{g}}\opname{Gr}_{\underline{e}}^{lf}(L)\times \opname{Gr}_{\underline{f}}^{lf}(N)$. Moreover, $\zeta_{\underline{g}, {lf}}$ coincides with $\zeta_{\underline{\hat{g}}}|_{\opname{Gr}_{\underline{g}}^{lf}(M,\iota,\kappa)}$. Clearly, we have $\zeta_{\underline{g},{lf}}^{-1}(0,N)=\emptyset$.

Let $(B,C)\in \opname{Gr}_{\underline{e}}^{lf}(L)\times \opname{Gr}_{\underline{f}}^{lf}(N)$ such that $(B,C)\neq (0,N)$ and $U$ an object in $\zeta_{\underline{\hat{g}}}^{-1}(B,C)$. We have a short exact sequence
\[0\to B\to U\to C\to 0.
\]
In particular, $U$ is a locally free $A$-module with rank vector $\rankv U=\underline{g}$. Consequently, $U\in \opname{Gr}_{\underline{g}}^{lf}(M,\iota,\kappa)$. This shows that $\zeta_{\underline{\hat{g}}}^{-1}(B,C)=\zeta_{\underline{g},{lf}}^{-1}(B,C)$ and hence is isomorphic to the affine space $\Hom_A(C,L/B)$.
 
\end{proof}

\begin{lemma}~\label{l:free-action}
Assume that $[\underline{g}:1]=1$ and $\hat{N}=(a,b)\in (2,n)^{\sqsubset}\cap^{\sqsupset}(n,n)$. If $\opname{Gr}_{\underline{g}}^{lf}(M)\backslash \opname{Gr}_{\underline{g}}^{lf}(M,\iota,\kappa)\neq \emptyset$, then the map $\bullet$ induces a free $\C^\times$-action on $\opname{Gr}_{\underline{g}}^{lf}(M)\backslash \opname{Gr}_{\underline{g}}^{lf}(M,\iota,\kappa)$.
\end{lemma}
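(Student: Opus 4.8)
The plan is to push the whole statement down to linear algebra at the vertex $1$, using that local freeness is detected there (Remark~\ref{r:locally-free}) and that $[\underline{g}:1]=1$ makes $U_1\cong\C[x]/(x^2)$ for every $U\in\opname{Gr}_{\underline{g}}^{lf}(M)$. First I would set up the vertex-$1$ picture from the stratification of morphisms into $\ct$- and $\der$-parts (Section~\ref{s:grassmanian}). Writing $M_1=M^t\oplus M^b$ for the splitting into the $\Hom_{\der^b(\ct)}(T_1,-)$- and $\Hom_{\der^b(\ct)}(\tau\Sigma^{-1}T_1,-)$-parts, the loop acts as a single map $\rho_{tb}\colon M^t\to M^b$ and by zero on $M^b$. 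A rank count shows that $M$ locally free forces $\rho_{tb}$ to be bijective, and that $L$ locally free (with $\iota$ the image of a $\ct$-morphism, so that $L_1=L^t\oplus L^b$ is homogeneous and $\rho_{tb}(L^t)=L^b$) forces
\[\rho_{tb}^{-1}(L^b)=L^t .\]
This identity is the technical heart of the argument.

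I would then describe the complement intrinsically. For $U\in\opname{Gr}_{\underline{g}}^{lf}(M)$ set $B=\iota^{-1}(U)=U\cap L$ and $C=\kappa(U)$; from $0\to B\to U\to C\to 0$ one gets $\dim_\C B_1+\dim_\C C_1=2$, and by Remark~\ref{r:locally-free} both $B$ and $C$ are locally free exactly when $\dim_\C B_1\in\{0,2\}$. As the only one-dimensional $\rho$-stable subspace of $U_1\cong\C[x]/(x^2)$ is its socle, this gives
\[U\in\opname{Gr}_{\underline{g}}^{lf}(M)\setminus\opname{Gr}_{\underline{g}}^{lf}(M,\iota,\kappa)\iff \operatorname{soc}(U_1)\subseteq L_1\ \text{and}\ U_1\not\subseteq L_1 .\]
Writing $U_1=\langle v,\rho v\rangle$ with $v=v^t+v^b$, so that $\operatorname{soc}(U_1)=\langle\rho_{tb}v^t\rangle\subseteq M^b$, the displayed identity yields the key claim: if $\operatorname{soc}(U_1)\subseteq L_1$ then $\rho_{tb}v^t\in L^b$, hence $v^t\in\rho_{tb}^{-1}(L^b)=L^t$ and $\operatorname{soc}(U_1)\subseteq L^b$.

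Finally I would compute the action. Representing $U_1$ as the image of $\begin{bmatrix}P&0\\ R&Q\end{bmatrix}$, whose $\ct$-block is diagonal and whose $\der$-block is strictly lower (Section~\ref{s:grassmanian}), the map $\bullet$ replaces $R$ by $cR$; hence it fixes the projection $\pi_t(U_1)=\langle v^t\rangle$ and the socle $\langle\rho_{tb}v^t\rangle$, and sends $v^b$ to $cv^b$. Since Lemma~\ref{l:c-action} already guarantees $\bullet$ preserves $\opname{Gr}_{\underline{g}}^{lf}(M)$, stability only requires checking the two conditions above: the socle is $\bullet$-fixed, and on the complement $v^t\in L^t$ (key claim) together with $v^b\in L^b\iff cv^b\in L^b$ shows that $(c\bullet U)_1\subseteq L_1\iff U_1\subseteq L_1$. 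For freeness, on the complement $v^t\in L^t$ forces $v^b\notin L^b$; since $\operatorname{soc}(U_1)\subseteq L^b$ this gives $v^b\notin\operatorname{soc}(U_1)$. Reading $c\bullet U=U$ at vertex $1$ then yields $(c-1)v^b\in\operatorname{soc}(U_1)$, whence $c=1$, i.e. every stabiliser is trivial.

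I expect the vertex-$1$ bookkeeping of the last paragraph to be the main obstacle: verifying from the matrix formulas for $F$ that $\bullet$ genuinely scales only the $\der$-component of the generator while fixing its $\ct$-component and the socle (and that this is independent of the chosen lift, as in Lemma~\ref{l:c-action}), and controlling the interplay of the socle condition with $\rho_{tb}^{-1}(L^b)=L^t$. One must also check that the purely vertex-$1$ description of the complement is legitimate and that the position hypothesis $\hat{N}=(a,b)\in(2,n)^\sqsubset\cap{}^\sqsupset(n,n)$ secures $\dim_\C N_1=2$, so that $L^b\subsetneq M^b$ and the short exact sequences at vertex $1$ leave room for a nonempty complement.
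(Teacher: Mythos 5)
Your proposal is correct, and at its core it is the same argument as the paper's: both reduce the lemma to linear algebra at vertex $1$ via the splitting of vertex-$1$ spaces into $\Hom_{\der^b(\ct)}(T_1,-)$- and $\Hom_{\der^b(\ct)}(\tau\Sigma^{-1}T_1,-)$-components, and both rest on the observation that $\bullet$ rescales only the strictly lower ($\der$-) block of a lift of $U\hookrightarrow M$ while fixing the diagonal ($\ct$-) block, hence fixes $\operatorname{soc}(U_1)$ and the $t$-component of a generator. The implementations are dual to one another. The paper works on the quotient side: after decomposing $\hat{U}=\hat{V}\oplus\hat{W}$ it chooses bases making $F(h_U)_1$ and $F([\hat{\kappa}_1~\hat{\kappa}_2])_1$ explicit matrices with parameters $a,x,y$; membership in the complement becomes $a=1$, $x\neq y$, stability is the computation that the composite becomes $\bigl[\begin{smallmatrix}0&0\\ c(x-y)&0\end{smallmatrix}\bigr]$, and freeness is the check that the two column spans differ for $c\neq 1$. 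You work on the submodule side: the complement is characterized by $\operatorname{soc}(U_1)\subseteq L_1$ and $U_1\not\subseteq L_1$ (equivalent to the paper's condition, since in $0\to B_1\to U_1\to C_1\to 0$ the two ends are simultaneously free or non-free over $\C[x]/(x^2)$), and your identity $\rho_{tb}^{-1}(L^b)=L^t$ --- which uses exactly the paper's standing reduction that $\iota$ and $\kappa$ are images of $\ct$-morphisms, so that $L_1$ is homogeneous --- plays the role of the paper's basis normalization. Your version buys a coordinate-free stability and freeness argument ($(c-1)v^b\in\operatorname{soc}(U_1)$ while $v^b\notin\operatorname{soc}(U_1)$ forces $c=1$, where $v^t\neq 0$ is needed to compare generators, as you implicitly use); the paper's coordinates make each of the three steps a one-line matrix computation. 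Both are complete proofs of the statement.
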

\begin{proof}
By $\hat{N}\in (2,n)^{\sqsubset}\cap^{\sqsupset}(n,n)$, we deduce that $\hat{M}$ has exactly two indecomposable direct summands $\hat{X}$ and $\hat{Y}$. Moreover, we have $[\rankv M:1]=2$ and $[\rankv L:1]=[\rankv N:1]=1$ in this case.

For $U\in \opname{Gr}_{\underline{g}}^{lf}(M)\backslash \opname{Gr}_{\underline{g}}^{lf}(M,\iota,\kappa)$, denote by $\hat{U}$ the preimage of $U$ in $\opname{pr}T\backslash\add \Sigma T$. Let  $h_U={\tiny\begin{bmatrix}h_{\hat{X},0}+h_{\hat{X},1}\\ h_{\hat{Y},0}+h_{\hat{Y},1}\end{bmatrix}}:\hat{U}\to \hat{X}\oplus \hat{Y}=\hat{M}$ be a morphism such that $F(h_U)$ is injective with $\im F(h_U)=U\subseteq F(\hat{M})=M$, where $h_{\hat{X},0}, h_{\hat{Y},0}$ are $\ct$-morphisms and $h_{\hat{X},1}, h_{\hat{Y},1}$ are $\der$-morphisms.
As $[\underline{g}:1]=1$, we can decompose $\hat{U}$ as $\hat{U}=\hat{V}\oplus \hat{W}$ such that $\hat{V}$ is indecomposable and 
\[[\rankv F(\hat{V}):1]=1, ~[\rankv F(\hat{W}):1]=0.
\]
Consequently, $\hat{V}\in (1,n)^{\sqsubset}\cap ^{\sqsupset}(n,n)$ and $F(\hat{U})_1=F(\hat{V})_1=\Hom_{\der^b(\ct)}(T_1,\hat{V})\oplus \Hom_{\der^b(\ct)}(\tau\Sigma^{-1}T_1, \hat{V})$. The morphism $h_U$ induces a $\C$-linear map $F(h_U)_1$ as
\begin{eqnarray*}
F(h_U)_1&=&\Hom_{\der^b(\ct)}(T_1,\hat{V})\oplus \Hom_{\der^b(\ct)}(\tau\Sigma^{-1}T_1,\hat{V})\xrightarrow{\tiny\begin{bmatrix}\Hom_{\der^b(\ct)}(T_1, h_{\hat{X},0})&0\\ \Hom_{\der^b(\ct)}(T_1, h_{\hat{X},1})&\Hom_{\der^b(\ct)}(\tau\Sigma^{-1}T_1, h_{\hat{X},0})\\ \Hom_{\der^b(\ct)}(T_1, h_{\hat{Y},0})&0\\ \Hom_{\der^b(\ct)}(T_1, h_{\hat{Y},1})&\Hom_{\der^b(\ct)}(\tau\Sigma^{-1}T_1, h_{\hat{Y},0})\end{bmatrix}}\\
&&\Hom_{\der^b(\ct)}(T_1,\hat{X})\oplus \Hom_{\der^b(\ct)}(\tau\Sigma^{-1}T_1,\hat{X})\oplus\Hom_{\der^b(\ct)}(T_1,\hat{Y})\oplus \Hom_{\der^b(\ct)}(\tau\Sigma^{-1}T_1,\hat{Y}).
\end{eqnarray*}
As $F(h_{U})$ is injective, we conclude that at least one of $h_{\hat{X},0}$ and $h_{\hat{Y},0}$ is non-zero. Without loss of generality, let us assume that $h_{\hat{X},0}\neq 0$. Note that, as $\hat{V}$ and $\hat{X}$ belong to  $(1,n)^\sqsubset\cap ^\sqsupset(n,n)$, we have $\Hom_{\der^b(\ct)}(T_1, h_{X,0})\neq 0\neq \Hom_{\der^b(\ct)}(\tau\Sigma^{-1}T_1, h_{\hat{X},0})$ ({\it cf.} Lemma~2.10 in~\cite{FGL1}). In particular, both $\Hom_{\der^b(\ct)}(T_1, h_{X,0})$ and $\Hom_{\der^b(\ct)}(\tau\Sigma^{-1}T_1, h_{\hat{X},0})$ are isomorphisms of one-dimensional $\C$-vector spaces.

On the other hand, by the assumption that $\kappa:M\to N$ is the image of the $\ct$-morphism $[\hat{\kappa}_1~\hat{\kappa}_2]$, we also have a $\C$-linear map $F([\hat{\kappa}_1~\hat{\kappa}_2])_1$ as
\begin{eqnarray*}F([\hat{\kappa}_1~\hat{\kappa}_2])_1&=&\Hom_{\der^b(\ct)}(T_1,\hat{X})\oplus \Hom_{\der^b(\ct)}(\tau\Sigma^{-1}T_1,\hat{X})\oplus\Hom_{\der^b(\ct)}(T_1,\hat{Y})\oplus \Hom_{\der^b(\ct)}(\tau\Sigma^{-1}T_1,\hat{Y})\\
&&\xrightarrow{\tiny\begin{bmatrix}\Hom_{\der^b(\ct)}(T_1, \hat{\kappa}_1)&0&\Hom_{\der^b(\ct)}(T_1, \hat{\kappa}_2)&0\\ 0&\Hom_{\der^b(\ct)}(\tau\Sigma^{-1}T_1, \hat{\kappa}_1)&0&\Hom_{\der^b(\ct)}(\tau\Sigma^{-1}T_1, \hat{\kappa}_2)\end{bmatrix}} \\
&&\Hom_{\der^b(\ct)}(T_1, \hat{N})\oplus \Hom_{\der^b(\ct)}(\tau\Sigma^{-1}T_1, \hat{N}).
\end{eqnarray*}
Moreover, all of $\Hom_{\der^b(\ct)}(T_1, \hat{\kappa}_1)$, $\Hom_{\der^b(\ct)}(\tau\Sigma^{-1}T_1, \hat{\kappa}_1)$, $\Hom_{\der^b(\ct)}(T_1, \hat{\kappa}_2)$ and $\Hom_{\der^b(\ct)}(\tau\Sigma^{-1}T_1, \hat{\kappa}_2)$ are isomorphisms of one-dimensional $\C$-vector spaces. We may identify each one-dimensional $\Hom$-space with $\C$ by choosing a suitable basis such that the morphisms have the following presentations:
\[\xymatrix{F(\hat{U})_1=\C\oplus \C\ar@{->}@(ur,ul)_{\tiny\begin{bmatrix}0&0\\1&0\end{bmatrix}}\ar[dd]^{{\tiny\begin{bmatrix}1&0\\ x&1\\ a&0\\ y&a\end{bmatrix}}=F(h_{U})_1}\\
\\
F(\hat{M})_1=\C\oplus \C\oplus \C\oplus \C\ar@{->}@(dr,dl)^{\tiny\begin{bmatrix}0&0&0&0\\1&0&0&0\\ 0&0&0&0\\ 0&0&1&0\end{bmatrix}}\ar[rrrr]_{~\quad~\tiny\begin{bmatrix}1&0&-1&0\\ 0&1&0&-1\end{bmatrix}=F([\hat{\kappa}_1~\hat{\kappa}_2])_1}&&&&F(\hat{N})_1=\C\oplus \C.\ar@{->}@(ur,ul)_{\tiny\begin{bmatrix}0&0\\1&0\end{bmatrix}}.
}
\]
Consequently, we have
\[F([\hat{\kappa}_1~\hat{\kappa}_2])_1F(h_U)_1=\begin{bmatrix}1&0&-1&0\\ 0&1&0&-1\end{bmatrix}\begin{bmatrix}1&0\\ x&1\\ a&0\\ y&a\end{bmatrix}=\begin{bmatrix}1-a&0\\ x-y&1-a\end{bmatrix}.
\]
Note that $U\in \opname{Gr}_{\underline{g}}^{lf}(M)\backslash\opname{Gr}_{\underline{g}}^{lf}(M,\iota,\kappa)$ implies that $\kappa(U)$ is not a locally free $A$-module. Hence we have $1-a=0$ and $x-y\neq 0$. For $c\in \C^\times$, denote by $U':=c\bullet U$, it is clear that $U'_1=\im \begin{bmatrix}1&0\\ cx&1\\ 1&0\\ cy&1\end{bmatrix}: \C\oplus \C\to \C\oplus \C\oplus \C\oplus \C$. Consequently, 
\[\begin{bmatrix}1&0&-1&0\\ 0&1&0&-1\end{bmatrix}\begin{bmatrix}1&0\\ cx&1\\ 1&0\\ cy&1\end{bmatrix}=\begin{bmatrix}0&0\\ c(x-y)&0\end{bmatrix},
\]
which implies that $U'\in \opname{Gr}_{\underline{g}}^{lf}(M)\backslash\opname{Gr}_{\underline{g}}^{lf}(M,\iota,\kappa)$. In particular, the $\C^\times$-action $\bullet$ of $\opname{Gr}_{\underline{g}}^{lf}(M)$ induces a $\C^\times$-action on $\opname{Gr}_{\underline{g}}^{lf}(M)\backslash\opname{Gr}_{\underline{g}}^{lf}(M,\iota,\kappa)$.
Now,  it is routine to check that 
\[\im \begin{bmatrix}1&0\\ cx&1\\ 1&0\\ cy&1\end{bmatrix}\neq \im \begin{bmatrix}1&0\\ x&1\\ 1&0\\ y&1\end{bmatrix}
\]
for $c\neq 1$, 
which implies that the action is free.

\end{proof}

Now we are in position to state the main result of this subsection.
\begin{proposition}~\label{p:equality-Euler-char}
Keep the notations as above. For each $\underline{g}\in \N^n$, we have
\[\chi(\opname{Gr}_{\underline{g}}^{lf}(M))=\chi(\opname{Gr}_{\underline{g}}^{lf}(M,\iota,\kappa)).
\]
\end{proposition}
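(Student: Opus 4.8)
The plan is to realize both sides through a constructible decomposition of $\opname{Gr}_{\underline{g}}^{lf}(M)$ and to show that the ``bad'' part contributes nothing to the Euler characteristic. Set
\[
D:=\opname{Gr}_{\underline{g}}^{lf}(M)\setminus \opname{Gr}_{\underline{g}}^{lf}(M,\iota,\kappa).
\]
Since the defining conditions of $\opname{Gr}_{\underline{g}}^{lf}(M,\iota,\kappa)$ (namely that $\iota^{-1}(U)$ and $\kappa(U)$ be locally free) amount, by Remark~\ref{r:locally-free}, to maximality of the rank of the $\rho$-action on the relevant subquotient at the vertex $1$, both $\opname{Gr}_{\underline{g}}^{lf}(M,\iota,\kappa)$ and $D$ are constructible, and I would use additivity of $\chi$ to write $\chi(\opname{Gr}_{\underline{g}}^{lf}(M))=\chi(\opname{Gr}_{\underline{g}}^{lf}(M,\iota,\kappa))+\chi(D)$. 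Everything then reduces to proving $\chi(D)=0$.

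The next step is a case reduction showing that $D=\emptyset$ except in a single configuration. By Remark~\ref{r:locally-free} local freeness is detected entirely at the vertex $1$, and from $0\to L\to M\to N\to 0$ one gets $[\rankv M:1]=[\rankv L:1]+[\rankv N:1]\in\{0,1,2\}$. I would dispose of the trivial cases as follows, in each of which $D=\emptyset$: if $[\rankv M:1]=0$ then $M_1=0$, hence $L_1=N_1=0$ and every subquotient is locally free at the vertex $1$; if $[\underline{g}:1]=0$ then $U_1=0$, so $\kappa(U)_1=0$ and, by injectivity of $\iota$, $\iota^{-1}(U)_1=0$; and if $[\underline{g}:1]=[\rankv M:1]$ then $U_1=M_1$, whence $\iota^{-1}(U)_1=L_1$ and $\kappa(U)_1=N_1$ are free. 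The only surviving possibility is therefore $[\rankv M:1]=2$ together with $[\underline{g}:1]=1$.

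In this remaining case both summands $\hat{X}=(a-1,b+1)$ and $\hat{Y}=(a,b-1)$ of $\hat{M}$ satisfy $[\rankv F(\hat{X}):1]=[\rankv F(\hat{Y}):1]=1$, that is, both lie in $(1,n)^{\sqsubset}\cap{}^{\sqsupset}(n,n)$ by Lemma~\ref{l:morphism-non-vanish}. A short combinatorial check inside the tube, using the explicit description of the region $(1,n)^{\sqsubset}\cap{}^{\sqsupset}(n,n)$ and the position of $\hat{N}=(a,b)$ as the mouth of the AR triangle carrying these two wings, then yields $\hat{N}\in (2,n)^{\sqsubset}\cap{}^{\sqsupset}(n,n)$. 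At this point all hypotheses of Lemma~\ref{l:free-action} are satisfied, so the $\C^{\times}$-action of Lemma~\ref{l:c-action} restricts to a \emph{free} action on $D$. Since a complex variety carrying a free $\C^{\times}$-action has vanishing Euler characteristic (its $\C^{\times}$-fixed locus is empty and $\chi$ equals the Euler characteristic of the fixed locus), we conclude $\chi(D)=0$, and combining with the additivity above gives the claimed equality.

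The step I expect to be the main obstacle is the combinatorial identification $[\rankv M:1]=2\Rightarrow \hat{N}\in(2,n)^{\sqsubset}\cap{}^{\sqsupset}(n,n)$: this is precisely what licenses the appeal to Lemma~\ref{l:free-action}, and it is the point where one genuinely uses the geometry of the tube rather than purely homological input. A secondary technical point is the constructibility needed for additivity of $\chi$, which should follow from upper semicontinuity of the rank of the $\rho$-action on $\kappa(U)$ and on $\iota^{-1}(U)$ as $U$ ranges over $\opname{Gr}_{\underline{g}}^{lf}(M)$.
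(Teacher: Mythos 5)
Your proposal is correct and follows essentially the same route as the paper: both reduce to the single nontrivial configuration ($[\rankv M:1]=2$, $[\underline{g}:1]=1$, forcing $\hat{N}\in(2,n)^{\sqsubset}\cap{}^{\sqsupset}(n,n)$) and then kill $\chi$ of the complement via the free $\C^{\times}$-action of Lemma~\ref{l:free-action}. The only difference is organizational --- you case-split on the ranks at vertex $1$ and deduce the position of $\hat{N}$, while the paper case-splits on the position of $\hat{N}$ in the tube and deduces the ranks --- and your combinatorial bridge between the two is the same tube computation the paper performs in the opposite direction.
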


\begin{proof}
We first note that an indecomposable rigid object $Z\in \cc$ satisfies that $[\rankv F(Z):1]\neq 0$ if and only if $Z\in (1,n)^{\sqsubset}\cap ~^{\sqsupset}(n, n)$. Moreover, we have $[\rankv F(Z):1]=1$ in this case ({\it cf.} Lemma~\ref{l:morphism-non-vanish}).
Recall that $\hat{N}=(a,b)$ is rigid.

If $\hat{N}\in \cw_{(1,n)}$, then $\tau\hat{N}\in \cw_{(n+1,n)}$ and hence $[\rankv L:1]=0$. Moreover, we also have $[\rankv M:1]=[\rankv N:1]\leq 1$. Consequently, for each  $U\in \opname{Gr}_{\underline{g}}^{lf}(M)$, we deduce that both $\iota^{-1}(U)$ and $\kappa(U)$ are locally free.
In particular, $\opname{Gr}_{\underline{g}}^{lf}(M)=\opname{Gr}_{\underline{g}}^{lf}(M,\iota, \kappa)$ and hence $\chi(\opname{Gr}_{\underline{g}}^{lf}(M))=\chi(\opname{Gr}_{\underline{g}}^{lf}(M,\iota,\kappa))$  provided that $\hat{N}\in \cw_{(1,n)}$.

If $\hat{N}\in (n+1,1)^{\sqsubset}$, then $[\rankv N:1]=0$ and $[\rankv L:1]=[\rankv M:1]=1$. Similar to the above case, one can show that $\opname{Gr}_{\underline{g}}^{lf}(M)=\opname{Gr}_{\underline{g}}^{lf}(M,\iota, \kappa)$. Hence $\chi(\opname{Gr}_{\underline{g}}^{lf}(M))=\chi(\opname{Gr}_{\underline{g}}^{lf}(M,\iota,\kappa))$ in this case.

Now it remains to consider the case $\hat{N}\in (2,n)^{\sqsubset}\cap~^{\sqsupset}(n,n)$. In this case, we have $[\rankv L:1]=[\rankv N:1]=1$ and $[\rankv M:1]=2$. If $[\underline{g}:1]=0$ or $[\underline{g}:1]=2$, we clearly have $\opname{Gr}_{\underline{g}}^{lf}(M)=\opname{Gr}_{\underline{g}}^{lf}(M,\iota, \kappa)$. Now assume that $[\underline{g}:1]=1$. Applying Lemma~\ref{l:free-action}, we conclude that $\chi(\opname{Gr}_{\underline{g}}^{lf}(M))=\chi(\opname{Gr}_{\underline{g}}^{lf}(M,\iota,\kappa))$. This completes the proof.

\end{proof}

\section{The proof of the main theorem}~\label{s:the-proof}

\subsection{Exchange relations}~\label{ss: the proof of the main result}
We fix a basic maximal rigid object $T=\bigoplus_{i=1}^nT_i$ with $T_1=(1,n)$ throughout this subsection.
We begin with the multiplicativity of $\mathbb{X}^T_?$, which is a direct consequence of Lemma~\ref{l:multiplicativity-CC} and the definition of $\mathbb{X}^T_?$.
\begin{lemma}~\label{l:cc-map-direct-sum}
Let $X,Y\in \opname{pr}T\cap\opname{pr}\Sigma T$ such that $F(X)$ and $F(Y)$ are locally free, then
\[\mathbb{X}^T_{X\oplus Y}=\mathbb{X}^T_X\mathbb{X}^T_Y.
\]
\end{lemma}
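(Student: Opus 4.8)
The plan is to unwind both sides against the definition
\[\mathbb{X}^T_Z = x^{-\opname{coind}_T(Z)}\sum_{\underline{e}}\chi(\opname{Gr}_{\underline{e}}^{lf}(F(Z)))\,x^{B_T\underline{e}}\]
and reduce the claim to two elementary facts: that the coindex is additive on direct sums, and that the generating function of Euler characteristics splits as a product, the latter being exactly Lemma~\ref{l:multiplicativity-CC}. First I would record the preliminary closure properties. Since $\opname{pr}T$ and $\opname{pr}\Sigma T$ are additive subcategories, $X\oplus Y$ again lies in $\opname{pr}T\cap\opname{pr}\Sigma T$; and since $F$ is an additive functor, $F(X\oplus Y)\cong F(X)\oplus F(Y)$, which is locally free because $(F(X)\oplus F(Y))e_i = F(X)e_i\oplus F(Y)e_i$ is free over $e_iAe_i$ whenever each summand is. Thus $\mathbb{X}^T_{X\oplus Y}$ is indeed defined and the two sides may be legitimately compared.

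Next I would establish the additivity $\opname{coind}_T(X\oplus Y)=\opname{coind}_T(X)+\opname{coind}_T(Y)$ in $\go(\add T)$. Choosing defining triangles
\[X\to \Sigma^2 T_X^1\to \Sigma^2 T_X^0\to \Sigma X \quad\text{and}\quad Y\to \Sigma^2 T_Y^1\to \Sigma^2 T_Y^0\to \Sigma Y,\]
their direct sum is again a triangle presenting $X\oplus Y$ with outer terms in $\add\Sigma^2 T$, so directly from the definition of coindex one reads off $\opname{coind}_T(X\oplus Y)=[T_X^1\oplus T_Y^1]-[T_X^0\oplus T_Y^0]=\opname{coind}_T(X)+\opname{coind}_T(Y)$. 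By the convention $x^{\alpha+\beta}=x^{\alpha}x^{\beta}$ for $\alpha,\beta\in\go(\add T)$, this yields the factorization $x^{-\opname{coind}_T(X\oplus Y)}=x^{-\opname{coind}_T(X)}\,x^{-\opname{coind}_T(Y)}$ of the prefactor.

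Finally I would combine the two ingredients. Applying Lemma~\ref{l:multiplicativity-CC} to $M=F(X)$ and $N=F(Y)$, and using that $\underline{e}\mapsto x^{B_T\underline{e}}$ is multiplicative since $B_T(\underline{e}+\underline{f})=B_T\underline{e}+B_T\underline{f}$, the Grassmannian part factors:
\begin{align*}
\sum_{\underline{g}}\chi(\opname{Gr}_{\underline{g}}^{lf}(F(X)\oplus F(Y)))\,x^{B_T\underline{g}}
&=\sum_{\underline{g}}\Big(\sum_{\underline{e}+\underline{f}=\underline{g}}\chi(\opname{Gr}_{\underline{e}}^{lf}(F(X)))\chi(\opname{Gr}_{\underline{f}}^{lf}(F(Y)))\Big)x^{B_T\underline{e}}x^{B_T\underline{f}}\\
&=\Big(\sum_{\underline{e}}\chi(\opname{Gr}_{\underline{e}}^{lf}(F(X)))x^{B_T\underline{e}}\Big)\Big(\sum_{\underline{f}}\chi(\opname{Gr}_{\underline{f}}^{lf}(F(Y)))x^{B_T\underline{f}}\Big).
\end{align*}
Multiplying by the factored coindex prefactor then gives $\mathbb{X}^T_{X\oplus Y}=\mathbb{X}^T_X\,\mathbb{X}^T_Y$. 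No step here presents a genuine obstacle; the only points requiring care are the additivity of the coindex (handled by taking the direct sum of presenting triangles) and the reindexing of the double sum, so the real content is entirely absorbed into Lemma~\ref{l:multiplicativity-CC}.
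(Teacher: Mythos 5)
Your proof is correct and follows exactly the route the paper intends: the paper states this lemma without a written proof, calling it a direct consequence of Lemma~\ref{l:multiplicativity-CC} and the definition of $\mathbb{X}^T_?$, and your argument simply makes the implicit steps explicit (additivity of the coindex via direct sums of presenting triangles, closure of locally free modules under direct sums, and the factorization of the Grassmannian generating function). Nothing differs in substance from the paper's approach.
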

 Let $R=(1,1)\oplus \cdots (1,n-1)\oplus (1,n)$ and $S=(1,1)\oplus\cdots (1,n-1)\oplus (n+1,n)$ be two basic maximal rigid object of $\cc$. There exist two exchange triangles associated to $R$ and $S$. Namely,
\[(1,n)\to 0\to (n+1,n)\to (n+1,n)
\]
and
\[(n+1,n)\to (1,n-1)^{\oplus 2}\to (1,n)\to (n,n).
\]
The following result computes the exchange relation between the Caldero-Chapoton maps of $T_1=(1,n)$ and $\Sigma T_1$.
\begin{proposition}~\label{p:projective-injective-case}
We have
\[\mathbb{X}_{\Sigma T_1}^T\mathbb{X}^T_{(1,n)}=1+(\mathbb{X}^T_{(1,n-1)})^2\]~\text{and}~\[\mathbb{X}_{\Sigma T_1}^T\mathbb{X}^T_{(n,n)}=1+(\mathbb{X}^T_{(n+1,n-1)})^2.
\]
\end{proposition}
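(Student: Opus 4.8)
The plan is to prove both exchange relations by computing each Caldero--Chapoton map explicitly and matching monomials, using the two exchange triangles displayed above together with the index/coindex bookkeeping of Section~\ref{s:index-coindex}. Since $\mathbb{X}^T_{\Sigma T_1}=x_1$, each identity has the shape $x_1\cdot\mathbb{X}^T_M=1+(\mathbb{X}^T_{M'})^2$, and by Lemma~\ref{l:cc-map-direct-sum} the right-hand side equals $1+\mathbb{X}^T_{M'\oplus M'}$, where by Lemma~\ref{l:exchange-triangle-1} the module $F(M'\oplus M')$ is exactly the maximal locally free submodule (first relation) or factor (second relation) of $F(M)$. So the whole proof reduces to (i) a clean stratification of the locally free submodule Grassmannian of $F(M)$, and (ii) a few index/coindex identities.

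For the first relation I would start from $\mathbb{X}^T_{(1,n)}=x^{-\opname{coind}_T((1,n))}\sum_{\underline e}\chi(\opname{Gr}_{\underline e}^{lf}(P_1))x^{B_T\underline e}$ with $P_1=F((1,n))$ (locally free, with rank $1$ at vertex $1$). The key structural input is a dichotomy according to $[\underline e:1]\in\{0,1\}$ (Remark~\ref{r:locally-free}): writing $L=F((1,n-1)^{\oplus2})$ for the maximal locally free submodule, I would show that a locally free submodule $N$ with $Ne_1=0$ is forced into $L$ (hence these $N$ are precisely the submodules of $L$), while $Ne_1=P_1e_1$ forces $e_1\in N$ and so $N=P_1$. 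Thus the Grassmannian sum splits as $x^{B_T\rankv P_1}$ (from the single point $N=P_1$) plus the full sum $\sum_{\underline e}\chi(\opname{Gr}_{\underline e}^{lf}(L))x^{B_T\underline e}$. Multiplying by $x_1=x^{\underline{e_1}}$, the relation collapses to two identities of coordinate vectors (with respect to $[T_1],\dots,[T_n]$):
\[\opname{coind}_T((1,n))-\opname{coind}_T((1,n-1)^{\oplus2})=\underline{e_1}\quad\text{and}\quad \opname{coind}_T((1,n))=\underline{e_1}+B_T\rankv P_1.\]
The first follows from Proposition~\ref{p:index-coindex-triangle} applied to $\Sigma T_1\to(1,n-1)^{\oplus2}\to(1,n)\to\Sigma^2T_1$ (here $F(\Sigma T_1)=0$, so $F$ of the first map is injective) together with $\opname{coind}_T(\Sigma T_1)=-[T_1]$ from Lemma~\ref{l:index-coindex-euler-form}; the second is immediate from Corollary~\ref{c:coindex-index-matrix} and $\opname{ind}_T((1,n))=[T_1]$. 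These make the two summands become $(\mathbb{X}^T_{(1,n-1)})^2$ and $1$ respectively.

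For the second relation the argument is dual, now with $I_1=F((n,n))$ and its maximal locally free factor $F((n+1,n-1)^{\oplus2})$ from Lemma~\ref{l:exchange-triangle-1}(2). Since the socle of $I_1$ is $S_1$, the only locally free submodule with $[\underline e:1]=0$ is $N=0$ (contributing the constant $1$), whereas the submodules with $Ne_1=I_1e_1$ are exactly those containing the kernel $K$ (with $\dimv K=2\underline{e_1}$) of the quotient $I_1\twoheadrightarrow F((n+1,n-1)^{\oplus2})$, and $N\mapsto N/K$ identifies them with the locally free submodule Grassmannian of the factor. After multiplying by $x_1$ the relation reduces to $\opname{coind}_T((n,n))=\underline{e_1}$ and $\opname{coind}_T((n+1,n-1)^{\oplus2})=-B_T\underline{e_1}$. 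The first is the coindex of $\Sigma^2T_1$. For the second I would apply the index additivity of Proposition~\ref{p:index-coindex-triangle} to the triangle $(n,n)\to(n+1,n-1)^{\oplus2}\to(n+1,n)\to\Sigma(n,n)$ (the image under $\Sigma$ of the second exchange triangle, with $F((n+1,n))=0$) to get $\opname{ind}_T((n+1,n-1)^{\oplus2})=-B_T\rankv I_1$, and then convert back via Corollary~\ref{c:coindex-index-matrix} using $\rankv F((n+1,n-1)^{\oplus2})=\rankv I_1-\underline{e_1}$.

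I expect the main obstacle to be this last, dual step: Proposition~\ref{p:index-coindex-via-ARtriangle} supplies the identity $\opname{coind}_T(Y)=-B_T\underline{e_k}$ only for $k\neq1$, precisely excluding the loop vertex, so $\opname{coind}_T((n+1,n-1)^{\oplus2})=-B_T\underline{e_1}$ must be obtained separately through the exchange triangle rather than an AR-triangle. The submodule/factor identifications themselves are elementary module theory, but I would take care to verify that all intermediate modules ($L$, the factor $F((n+1,n-1)^{\oplus2})$, and $K$) are genuinely locally free and that the extremal strata are single points, i.e.\ $\chi(\opname{Gr}_{\rankv P_1}^{lf}(P_1))=\chi(\opname{Gr}_{0}^{lf}(I_1))=1$, so that each contributes exactly one monomial.
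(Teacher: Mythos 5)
Your proposal is correct and follows essentially the same route as the paper's own proof: split the sum over $\opname{Gr}^{lf}_{\underline{e}}(P_1)$ (resp.\ $\opname{Gr}^{lf}_{\underline{e}}(I_1)$) using the maximal locally free submodule (resp.\ factor) supplied by Lemma~\ref{l:exchange-triangle-1}, then match the resulting monomials via Proposition~\ref{p:index-coindex-triangle}, Corollary~\ref{c:coindex-index-matrix} and the multiplicativity of Lemma~\ref{l:cc-map-direct-sum}. The extra details you supply --- the $[\underline{e}:1]\in\{0,1\}$ dichotomy justifying the stratification, and the explicit dual computation for $(n,n)$ through the $\Sigma$-image of the exchange triangle --- are correct elaborations of steps the paper leaves implicit (it cites Lemma~\ref{l:exchange-triangle-1} and says the second equality is ``proved similarly''), not a genuinely different method.
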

\begin{proof}
By definition, we have $\mathbb{X}_{\Sigma T_1}^T=x_1$ and 
\[\mathbb{X}_{(1,n)}^T=x^{-\opname{coind}_T(1,n)}\sum_{\underline{e}}\chi(\opname{Gr}_{\underline{e}}^{lf}(P_1))x^{B_T\underline{e}}.
\]
Denote by $M=F((1,n-1))$.
According to Lemma~\ref{l:exchange-triangle-1}, we may rewrite
\[\mathbb{X}_{(1,n)}^T=x^{-\opname{coind}_T(T_1)}\sum_{\underline{f}}\chi(\opname{Gr}_{\underline{f}}^{lf}(M^{\oplus 2}))x^{B_T\underline{f}}+x^{-\opname{coind}_T(T_1)}x^{B_T\underline{\opname{rank}}P_1}.
\] 
On the other hand, by Proposition~\ref{p:index-coindex-triangle} we have \[\opname{coind}_TT_1-\opname{coind}_T(\Sigma^2T_1)=\opname{coind}_T(1,n-1)^{\oplus 2},\] and by Corollary~\ref{c:coindex-index-matrix} we have \[\opname{coind}_TT_1-\opname{ind}_TT_1=([T_1],\cdots, [T_n])B_T\rankv P_1.\] 
Putting all of these together, we compute
\begin{eqnarray*}
x_1\mathbb{X}_{(1,n)}^T&=&x^{\opname{coind}_T(\Sigma^2T_1)-\opname{coind}_TT_1}\sum_f\chi(\opname{Gr}_f^{lf}(M^{\oplus 2}))x^{B_Tf}+1\\
&=&1+x^{-\opname{coind}_T(1,n-1)^{\oplus 2}}\sum_{\underline{f}}\chi(\opname{Gr}_{\underline{f}}^{lf}(M^{\oplus 2}))x^{B_T\underline{f}}\\
&=&1+(\mathbb{X}_{(1,n-1)}^T)^2,
\end{eqnarray*}
where the last equality follows from Lemma~\ref{l:cc-map-direct-sum}.
This completes the proof of the first equality.
The second equality can be proved similarly.
\end{proof}

\begin{proposition}~\label{p:n-case}
For $1\leq c<n$, we have
\[\mathbb{X}^T_{(c,n)}\mathbb{X}^T_{(c+1,n)}=1+(\mathbb{X}_{(c+1,n-1)}^T)^2.
\]
\end{proposition}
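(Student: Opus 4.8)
The plan is to derive this exchange relation from a single Auslander--Reiten triangle together with the Grassmannian machinery of Section~\ref{s:grassmanian}, much in the spirit of Proposition~\ref{p:projective-injective-case} but using the fibration rather than two explicit exchange triangles. Write $L=F((c,n))$, $N=F((c+1,n))$ and $M=F((c+1,n-1)\oplus(c,n+1))$. First I would invoke the AR-triangle
\[(c,n)\to (c+1,n-1)\oplus (c,n+1)\to (c+1,n)\to \Sigma(c,n)\]
already used in Lemma~\ref{l:equality-cc-map} (case $i=c$). Since $1\le c<n$ and $T_1=(1,n)$ is the unique length-$n$ summand, neither $(c+1,n)$ nor $\Sigma(c+1,n)=(c,n)$ lies in $\add\Sigma T$; hence Proposition~\ref{p:index-coindex-via-ARtriangle}(a) applies, producing the short exact sequence $0\to L\xrightarrow{\iota}M\xrightarrow{\kappa}N\to 0$ in $\mod A$ and the coindex additivity $\opname{coind}_T((c+1,n-1)\oplus(c,n+1))=\opname{coind}_T((c,n))+\opname{coind}_T((c+1,n))$. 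As $N$ is $\tau_A$-rigid, this is precisely the setting of Propositions~\ref{p:fibre-grassmanian} and~\ref{p:equality-Euler-char}.

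The combinatorial heart is an Euler-characteristic count. Using the fibration $\zeta_{\underline{g},lf}$ of Proposition~\ref{p:fibre-grassmanian}, whose fibres over $(B,C)\neq(0,N)$ are affine spaces (so $\chi=1$) and whose fibre over $(0,N)$ is empty, I would obtain
\[\chi(\opname{Gr}_{\underline{g}}^{lf}(M,\iota,\kappa))=\sum_{\underline{e}+\underline{f}=\underline{g}}\chi(\opname{Gr}_{\underline{e}}^{lf}(L))\chi(\opname{Gr}_{\underline{f}}^{lf}(N))-\delta_{\underline{g},\rankv N},\]
the correction recording the single missing point $(0,N)$, which occurs only in the component $\underline{e}=\underline{0},\ \underline{f}=\rankv N$ (here $\chi(\opname{Gr}_{\underline{0}}^{lf}(L))=\chi(\opname{Gr}_{\rankv N}^{lf}(N))=1$). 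Replacing $\opname{Gr}_{\underline{g}}^{lf}(M,\iota,\kappa)$ by $\opname{Gr}_{\underline{g}}^{lf}(M)$ via Proposition~\ref{p:equality-Euler-char}, and the double sum by $\chi(\opname{Gr}_{\underline{g}}^{lf}(L\oplus N))$ via Lemma~\ref{l:multiplicativity-CC}, yields
\[\chi(\opname{Gr}_{\underline{g}}^{lf}(L\oplus N))=\chi(\opname{Gr}_{\underline{g}}^{lf}(M))+\delta_{\underline{g},\rankv N}.\]

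Then I would assemble the two Caldero--Chapoton maps. Expanding the product and grouping by $\underline{g}=\underline{e}+\underline{f}$ (Lemma~\ref{l:multiplicativity-CC}) gives
\[\mathbb{X}^T_{(c,n)}\mathbb{X}^T_{(c+1,n)}=x^{-\opname{coind}_T((c,n))-\opname{coind}_T((c+1,n))}\sum_{\underline{g}}\chi(\opname{Gr}_{\underline{g}}^{lf}(L\oplus N))x^{B_T\underline{g}}.\]
Substituting the identity above, the $\chi(\opname{Gr}^{lf}(M))$-part reassembles, by the coindex additivity, into $\mathbb{X}^T_{(c+1,n-1)\oplus(c,n+1)}$, which equals $(\mathbb{X}^T_{(c+1,n-1)})^2$ by Lemma~\ref{l:cc-map-direct-sum} and Lemma~\ref{l:equality-cc-map}; the correction contributes the single monomial $x^{-\opname{coind}_T((c,n))-\opname{coind}_T((c+1,n))+B_T\rankv N}$.

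The main obstacle---and the step I would verify most carefully---is showing this leftover monomial is exactly $1$, which is where the type-$\mathrm{C}$ bookkeeping enters. Combining the relation $\opname{ind}_T((c+1,n))=-\opname{coind}_T((c,n))$ (valid because $(c+1,n)\in\mathcal{R}$ and $\Sigma(c+1,n)=(c,n)$) with Corollary~\ref{c:coindex-index-matrix}, namely $\opname{coind}_T((c+1,n))-\opname{ind}_T((c+1,n))=B_T\rankv N$, gives $\opname{coind}_T((c,n))+\opname{coind}_T((c+1,n))=B_T\rankv N$, so the exponent vanishes and the monomial is $x^{0}=1$. Conceptually the real difficulty has already been absorbed into Proposition~\ref{p:equality-Euler-char}, whose $\mathbb{C}^\times$-action compensates for the failure of $\kappa(U)$ to be locally free; granting that, the remaining work is exactly the index/coindex arithmetic just described, yielding $\mathbb{X}^T_{(c,n)}\mathbb{X}^T_{(c+1,n)}=1+(\mathbb{X}^T_{(c+1,n-1)})^2$.
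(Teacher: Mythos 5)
Your proposal is correct and follows essentially the same route as the paper's proof: the same AR-triangle, Proposition~\ref{p:index-coindex-via-ARtriangle}(a) for the AR-sequence and coindex additivity, Propositions~\ref{p:fibre-grassmanian} and~\ref{p:equality-Euler-char} for the Euler-characteristic identity with its single correction term at $(0,N)$, and Lemmas~\ref{l:cc-map-direct-sum} and~\ref{l:equality-cc-map} to finish. Your careful verification that the leftover monomial equals $1$, via $\opname{ind}_T(c+1,n)=-\opname{coind}_T(c,n)$ combined with Corollary~\ref{c:coindex-index-matrix}, is exactly the computation the paper compresses into its citation of Corollary~\ref{c:coindex-index-matrix} at step $(2)$.
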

\begin{proof}
Let \[(c,n)\to (c,n+1)\oplus (c+1,n-1)\to (c+1,n)\to (c-1,n)\] be the AR-triangle in $\cc$ associated to $(c,n)$. The condition $1\leq c<n$ implies that $(c,n+1)\in \opname{pr} T\cap \opname{pr}\Sigma T$. By Proposition~\ref{p:index-coindex-via-ARtriangle} (a),  we know that 
\[0\to F((c,n))\to F((c+1,n-1))\oplus F((c,n+1))\to F((c+1,n))\to 0\] is an AR sequence in $\mod A$ and \[\opname{coind}_T(c,n)+\opname{coind}_T(c+1,n)=\opname{coind}_T(c,n+1)+\opname{coind}_T(c+1,n-1).\]
Denote by $L=F((c,n)), M=F((c+1,n-1)), M'=F((c,n+1)), N=F((c+1,n))$,
we can compute
\begin{eqnarray*}
\mathbb{X}_{(c,n)}^T\mathbb{X}_{(c+1,n)}^T&=&x^{-\opname{coind}_T(c,n)-\opname{coind}_T(c+1,n)}\sum_{\underline{e}, \underline{f}}\chi(\opname{Gr}_{\underline{e}}^{lf}(L))\chi(\opname{Gr}_{\underline{f}}^{lf}(N))x^{B_T(\underline{e}+\underline{f})}\\
&\overset{(1)}=&x^{-\opname{coind}_T(c,n)-\opname{coind}_T(c+1,n)}(x^{B_T\rankv N}+\sum_{\underline{g}}\chi(\opname{Gr}_{\underline{g}}^{lf}(M\oplus M'))x^{B_T\underline{g}})\\
&\overset{(2)}=&1+x^{-\opname{coind}_T(c,n)-\opname{coind}_T(c+1,n)}\sum_{\underline{g}}\chi(\opname{Gr}_{\underline{g}}^{lf}(M\oplus M'))x^{B_T\underline{g}}\\
&=&1+x^{-\opname{coind}_T(c,n+1)-\opname{coind}_T(c+1,n-1)}\sum_{\underline{g}}\chi(\opname{Gr}_{\underline{g}}^{lf}(M\oplus M'))x^{B_T\underline{g}}\\
&\overset{(3)}=&1+\mathbb{X}^T_{(c+1,n-1)}\mathbb{X}^T_{(c,n+1)}\\
&\overset{(4)}=&1+(\mathbb{X}^T_{(c+1,n-1)})^2,
\end{eqnarray*}
where the equality $(1)$ follows from Proposition~\ref{p:fibre-grassmanian} and ~\ref{p:equality-Euler-char} and the equality $(2)$ is a consequence of Corollary~\ref{c:coindex-index-matrix}, for the equalities $(3)$ and $(4)$, we use Lemma~\ref{l:cc-map-direct-sum} and Lemma~\ref{l:equality-cc-map} respectively.
\end{proof}

\begin{proposition}~\label{p:non-n-case}
For $1\leq b<n$, we have
\[\mathbb{X}_{(a,b)}^T\mathbb{X}_{(a+1,b)}^T=1+\mathbb{X}_{(a+1,b-1)}^T\mathbb{X}_{(a,b+1)}^T.
\]
\end{proposition}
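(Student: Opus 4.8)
The plan is to mimic the proof of Proposition~\ref{p:n-case}, but now the relevant indecomposable object $(a,b)$ with $b<n$ sits strictly inside the wing $\cw_{(1,n)}$, so the analysis is in fact \emph{simpler} than the $b=n$ case, because the loop $\rho$ never intervenes. First I would write down the AR-triangle in $\cc$ associated to $(a,b)$, namely
\[
(a,b)\to (a,b+1)\oplus (a+1,b-1)\to (a+1,b)\to \Sigma(a,b).
\]
Since $1\leq b<n$, every object occurring here lies in $\cw_{(1,n)}$ and is therefore rigid and automatically in $\opname{pr}T\cap\opname{pr}\Sigma T$; moreover by Remark~\ref{r:locally-free} none of the modules $F((a,b))$, $F((a,b+1))$, $F((a+1,b-1))$, $F((a+1,b))$ have a nonzero first rank-coordinate. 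Applying Proposition~\ref{p:index-coindex-via-ARtriangle}(a) turns the AR-triangle into an AR-sequence of $\mod A$
\[
0\to F((a,b))\to F((a,b+1))\oplus F((a+1,b-1))\to F((a+1,b))\to 0,
\]
and simultaneously yields the additivity of coindex
\[
\opname{coind}_T(a,b)+\opname{coind}_T(a+1,b)=\opname{coind}_T(a,b+1)+\opname{coind}_T(a+1,b-1).
\]

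With these in hand, I would set $L=F((a,b))$, $N=F((a+1,b))$, $M=F((a+1,b-1))$ and $M'=F((a,b+1))$ and run the same Grassmannian computation as in Proposition~\ref{p:n-case}. Writing out $\mathbb{X}^T_{(a,b)}\mathbb{X}^T_{(a+1,b)}$ as a single sum over pairs $(\underline e,\underline f)$ with exponent $B_T(\underline e+\underline f)$, the key input is the fibration $\zeta_{\underline g,lf}$ of Proposition~\ref{p:fibre-grassmanian} applied to the AR-sequence above: its fibres over $(B,C)\neq(0,N)$ are affine spaces (hence have Euler characteristic $1$), while the fibre over $(0,N)$ is empty. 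Taking Euler characteristics and using Lemma~\ref{l:multiplicativity-CC} gives
\[
\sum_{\underline e+\underline f=\underline g}\chi(\opname{Gr}^{lf}_{\underline e}(L))\chi(\opname{Gr}^{lf}_{\underline f}(N))
=\chi(\opname{Gr}^{lf}_{\underline g}(M\oplus M'))+\delta,
\]
where the correction term $\delta$ accounts exactly for the excluded stratum $(0,N)$ and produces the summand $x^{B_T\rankv N}$, which collapses to the constant $1$ after multiplying by $x^{-\opname{coind}_T(a,b)-\opname{coind}_T(a+1,b)}$ by Corollary~\ref{c:coindex-index-matrix}. Here I expect the subtlety to be purely bookkeeping: since $[\rankv M:1]=0$ for all objects in this wing, the auxiliary Grassmannian $\opname{Gr}^{lf}_{\underline g}(M,\iota,\kappa)$ coincides with $\opname{Gr}^{lf}_{\underline g}(M)$ outright (this is the first case treated in the proof of Proposition~\ref{p:equality-Euler-char}), so no $\mathbb{C}^\times$-action argument is needed and every locally free submodule restricts/projects to a locally free one for free.

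The final step rewrites the surviving sum using the coindex identity above, which replaces $x^{-\opname{coind}_T(a,b)-\opname{coind}_T(a+1,b)}$ by $x^{-\opname{coind}_T(a,b+1)-\opname{coind}_T(a+1,b-1)}$, and then applies the multiplicativity Lemma~\ref{l:cc-map-direct-sum} to factor $\chi(\opname{Gr}^{lf}_{\underline g}(M\oplus M'))$-sum back into $\mathbb{X}^T_{(a+1,b-1)}\mathbb{X}^T_{(a,b+1)}$, yielding
\[
\mathbb{X}^T_{(a,b)}\mathbb{X}^T_{(a+1,b)}=1+\mathbb{X}^T_{(a+1,b-1)}\mathbb{X}^T_{(a,b+1)}.
\]
I anticipate that the only genuine obstacle is \emph{degenerate boundary behaviour}: when $b=1$ the summand $(a+1,b-1)=(a+1,0)$ must be interpreted as the zero object, and when $a$ is such that $(a,b+1)$ touches the edge of the relevant region one must check that the AR-triangle and its image under $F$ are still of the stated form; both of these should be handled by the conventions already in force (reading a length-$0$ object as $0$ and noting $\mathbb{X}^T_0=1$), so the argument goes through uniformly. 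All the heavy lifting — the fibration lemma and the Euler-characteristic multiplicativity — has already been established, so this proposition follows by specializing those tools to the interior wing.
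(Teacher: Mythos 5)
Your argument covers only part of the statement, and the reason is a false premise at the very start: you assert that for $1\leq b<n$ the objects $(a,b)$, $(a+1,b)$, $(a,b+1)$, $(a+1,b-1)$ all lie in the wing $\cw_{(1,n)}$. But in this proposition $a$ ranges over \emph{all} residues modulo $n+1$ (this generality is exactly what the proof of the main theorem needs, since the chosen path in $\mathbb{T}_n$ must reach every rigid object), so $(a,b)$ is an arbitrary rigid object of length $b<n$, not one inside the wing. Two consequences of your premise then fail. First, it is not true that $[\rankv F(-):1]=0$ for all the modules involved: when $(a+1,b)\in(2,n)^{\sqsubset}\cap{}^{\sqsupset}(n,n)$ one has $[\rankv F((a+1,b)):1]=1$ and $[\rankv (F((a,b+1))\oplus F((a+1,b-1))):1]=2$, which is precisely the hard case of Proposition~\ref{p:equality-Euler-char} where the free $\mathbb{C}^{\times}$-action of Lemma~\ref{l:free-action} is indispensable. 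This particular flaw is repairable, because Proposition~\ref{p:equality-Euler-char} is proved in the paper in full generality and you may cite it as a black box; but your stated justification (``only the first case is needed'') is wrong.

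The second consequence is a genuine gap that your proposal does not repair: nothing prevents $(a,b)$ or $(a+1,b)$ from lying in $\add\Sigma T$, and in that situation your entire mechanism breaks down. Proposition~\ref{p:index-coindex-via-ARtriangle}(a) explicitly excludes these cases; the functor $F$ kills the shifted summand, so $F$ applied to the AR-triangle does \emph{not} yield an AR-sequence $0\to F((a,b))\to F((a,b+1))\oplus F((a+1,b-1))\to F((a+1,b))\to 0$ in $\mod A$, and Proposition~\ref{p:fibre-grassmanian} (which is formulated for AR-sequences with $\tau_A$-rigid end term) cannot be invoked. The paper devotes two separate cases to this: if $(a+1,b)\cong\Sigma T_k$ one uses Proposition~\ref{p:index-coindex-via-ARtriangle}(b), identifying $F((a,b+1))\oplus F((a+1,b-1))$ as the maximal locally free \emph{factor} of the injective $I_k$ together with the coindex formula $-B_T\underline{e_k}$; if $(a,b)\cong\Sigma T_k$ one uses part (c), identifying it as the maximal locally free \emph{submodule} of the projective $P_k$; in both cases one factor of the product is the initial variable $x_k$ and the computation is different from the AR-sequence computation. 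Your ``degenerate boundary behaviour'' paragraph addresses only the harmless convention $(a+1,0)=0$ for $b=1$ and does not identify, let alone handle, these $\add\Sigma T$ cases, so the proposal as written proves the exchange relation only when neither end term is a shifted summand of $T$.
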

\begin{proof}
Let \[(a,b)\to (a,b+1)\oplus (a+1,b-1)\to (a+1,b)\to (a-1,b)\] be the AR triangle in $\cc$. 
We separate the proof into three cases.
\begin{itemize}
\item[{\bf Case 1:}] Neither $(a,b)$ nor $(a+1,b)$ belongs to $\add \Sigma T$. 
\end{itemize}

In this situation, we deduce that \[0\to F((a,b))\to F((a,b+1))\oplus F((a+1,b-1))\to F((a+1,b))\to 0\] is an AR sequence of $A$-modules and
\[\opname{coind}_T(a,b)+\opname{coind}_T(a+1,b)=\opname{coind}_T(a,b+1)+\opname{coind}_T(a+1,b-1)
\]
by Proposition~\ref{p:index-coindex-via-ARtriangle}~(a). Similar to Proposition~\ref{p:n-case}, one can prove the required equality.
\begin{itemize}
\item[{\bf Case 2:}] $(a+1,b)\in \add \Sigma T$.
\end{itemize}
 
 We may assume $(a,b)=\Sigma^2T_k$ for $k\neq 1$ and $(a+1,b)=\Sigma T_k$. According to Proposition~\ref{p:index-coindex-via-ARtriangle}~(b),  $F(a,b+1)\oplus F(a+1,b-1)$ is the maximal locally free factor of the injective module $F(a,b)=I_k$. Moreover, $\opname{coind}_T(a,b+1)\oplus (a+1,b-1)=-B_T\underline{e_k}$. Using this, we compute
\begin{eqnarray*}
\mathbb{X}^T_{(a,b)}\mathbb{X}^T_{(a+1,b)}&=&x^{\underline{e_k}-\opname{coind}_T(a,b)}\sum_{\underline{e}}\chi(\opname{Gr}_{\underline{e}}^{lf}(I_k))x^{B_T\underline{e}}\\
&=&1+\sum_{\underline{f}}\chi(\opname{Gr}_{\underline{f}}^{lf}(F((a,b+1))\oplus F((a+1,b-1)))x^{B_T(\underline{f}+\underline{e_k})}\\
&=&1+x^{-\opname{coind}_T(a,b+1)\oplus (a+1,b-1)}\sum_{\underline{f}}\chi(\opname{Gr}_{\underline{f}}^{lf}(F((a,b+1))\oplus F((a+1,b-1)))x^{B_T\underline{f}}\\
&=&1+\mathbb{X}_{(a+1,b-1)}^T\mathbb{X}_{(a,b+1)}^T.
\end{eqnarray*}
\begin{itemize}
\item[{\bf Case 3:}] $(a,b)\in \add \Sigma T$.
\end{itemize}

 Without loss of generality, we may assume $(a,b)=\Sigma T_k$ for $k\neq 1$ and hence $\mathbb{X}_{(a,b)}^T=x_k$.
 By Proposition~\ref{p:index-coindex-via-ARtriangle}~(c), we know that $F((a,b+1))\oplus F((a+1,b-1))$ is the maximal locally free submodule of the projective module$F((a+1,b))=P_k$ and 
 \[\opname{coind}_T(a+1,b)=\underline{e_k}+\opname{coind}_T(a,b+1)\oplus(a+1,b-1).
 \] 
 We compute
 \begin{eqnarray*}
 \mathbb{X}^T_{(a,b)}\mathbb{X}^T_{(a+1,b)}&=&x^{\underline{e_k}-\opname{coind}_T(a+1,b)}\sum_{\underline{e}}\chi(\opname{Gr}_{\underline{e}}^{lf}(P_k))x^{B_T\underline{e}}\\
 &=&x^{\underline{e_k}-\opname{coind}_T(a+1,b)}(x^{B_T\rankv P_k}+\sum_{\underline{f}}\chi(\opname{Gr}_{\underline{f}}^{lf}(F((a,b+1))\oplus F((a+1,b-1)))x^{B_T\underline{f}})\\
 &\overset{(1)}=&1+x^{-\opname{\coind}_T(a,b+1)\oplus (a+1,b-1)}\sum_{\underline{f}}\chi(\opname{Gr}_{\underline{f}}^{lf}(F((a,b+1))\oplus F((a+1,b-1)))x^{B_T\underline{f}}\\
&\overset{(2)}=&1+\mathbb{X}^T_{(a,b+1)}\mathbb{X}^T_{(a+1,b-1)},
 \end{eqnarray*}
where $(1)$ follows from Corollary~\ref{c:coindex-index-matrix} and $(2)$ is a consequence of Lemma~\ref{l:cc-map-direct-sum}.
\end{proof}

\subsection{Proof of the Main Theorem:}
It suffices to prove Theorem~\ref{t:main-thm-restate}.
Let $T=T_1\oplus\cdots\oplus T_n$ be a  basic maximal rigid object of $\cc$ with $T_1=(1,n)$ and $\mathcal{A}_{T}$ the associated cluster algebra without coefficients.
Let $\mathbb{T}_n$ be the $n$-regular tree. We fix a cluster pattern for the cluster algebra $\mathcal{A}_{T}$ by assigning the initial cluster $\mathrm{x}=\{x_1,\cdots, x_n\}$ to the root vertex $t_{\opname{r}}$. On the other hand, we may also assign each vertex $t\in \mathbb{T}_n$ a basic maximal rigid object $T_t=\bigoplus_{i=1}^nT_{t, i}$ such that
\begin{itemize}
\item[$\circ$] $T_{t_{\opname{r}}}=\Sigma T_1\oplus \cdots\oplus \Sigma T_n$.
\item[$\circ$] If $t\frac{~~k~~}{}t'$ is an edge of $\mathbb{T}_n$ labeled by $k$, then $\mu_k(T_{t})=T_{t'}$.
\end{itemize}
By Theorem~\ref{t:BMV-main-theorem}, there is a bijection between the basic maximal rigid objects of $\cc$ and the clusters of $\ca_{T}$, which is compatible with the mutations. 
Note that $\mathcal{A}_{T}$ has only finitely many cluster variables and by the definition of the Caldero-Chapoton map, we already have $\mathbb{X}^T_{\Sigma T_i}=x_i$. Thus, for each indecomposable rigid object $M\in \cc$, to show $\mathbb{X}^T_M$ is the cluster variable associated to $M$, it suffices to find a finite path in $\mathbb{T}_n$ such that
\begin{enumerate}
\item[(a)] each indecomposable rigid object appears as a direct summand for some basic maximal rigid objects associated to the vertices on the path;
\item[(b)] $\mathbb{X}^T_M$ is compatible with the exchange relations appears in the path.
\end{enumerate}
Denote by 
\[\xymatrix{t_0\ar@{-}[r]^1&t_1\ar@{-}[r]^2&\cdots\ar@{-}[r]^n&t_n\ar@{-}[r]^{1}&t_{n+1}\ar@{-}[r]^{2}&t_{n+2}\cdots\ar@{-}[r]^n&t_{n^2}}
\]
a path of the $n$-regular tree $\mathbb{T}_n$, where
\[T_{t_0}=(n+1,1)\oplus (n+1,2)\oplus \cdots\oplus (n+1,n)\]and for each $1\leq i\leq n^2$,  \[T_{t_i}=(a+1,1)\oplus \cdots\oplus (a+1,b)\oplus(a,b+1)\oplus \cdots\oplus (a,n),\] where $i=na+b, 0\leq b<n$.
It is clear that the path satisfies the condition $(a)$.
By applying Proposition~\ref{p:projective-injective-case},~\ref{p:n-case} and~\ref{p:non-n-case}, we conclude that $\mathbb{X}_M^T$ is compatible with the exchange traingles appears in the above path and we are done. \hfill$\Box$

\subsection{An example}
In this subsection, we give a non-acyclic example to illustrate our result.
Let $A$ be the algebra as in Example~\ref{ex:string-module}. Let $\mod A$ be the category of finitely generated right $A$-modules. Let $\tau_A$ be the AR translation of $\mod A$. 
The AR quiver of $\mod A$  is shown in Figure~1, where the indecomposable $\tau_A$-rigid modules are marked with rectangles. In particular, there are exactly $9$ indecomposable $\tau_A$-rigid $A$-modules with different rank vectors.
\begin{figure}
\[\xymatrix@=5pt{
&&&*[F]{{\tiny\begin{array}{c}2\\1\\1\\3\end{array} }}\ar[dr]&&&&*[F]{{\tiny\begin{array}{c}3\\2\end{array} }}\ar[dr]&&&&*[F]{{\tiny\begin{array}{c}2\\1\\1\\3\end{array} }}\ar[dr]&&&\\
&&*[F]{{\tiny\begin{array}{c}1\\1\\3\end{array} }}\ar[dr]\ar[ur]&&*[F]{{\tiny\begin{array}{c}2\\1\\1\end{array}} }\ar[dr]&&*[F]{{\tiny\begin{array}{c}2\end{array} } }\ar[ur]&&*[F]{{\tiny\begin{array}{c}3\end{array} }}\ar[dr]&&*[F]{{\tiny\begin{array}{c}1\\1\\3\end{array}} }\ar[dr]\ar[ur]&&*[F]{{\tiny\begin{array}{c}2\\1\\1\end{array}} }\ar[dr]&&\\
&*[F]{{\tiny\begin{array}{c}1\\13\\3\end{array} }}\ar[dr]\ar[ur]&&*[F]{{\tiny\begin{array}{c}1\\1\end{array}} }\ar[dr]\ar[ur]&&*[F]{{\tiny\begin{array}{c}2\\12\\1\end{array}} }\ar[dr]\ar[ur]&&&&*[F]{{\tiny\begin{array}{c}1\\13\\3\end{array} }}\ar[dr]\ar[ur]&&*[F]{{\tiny\begin{array}{c}1\\1\end{array} } }\ar[dr]\ar[ur]&&*[F]{{\tiny\begin{array}{c}2\\12\\1\end{array}} }\ar[ur]&\\
{\tiny\begin{array}{c}1\\3\end{array} }\ar[dr]\ar[ur]&&{\tiny\begin{array}{c}1\\13\end{array} }\ar[dr]\ar[ur]&&{\tiny\begin{array}{c}21\\1\end{array} }\ar[dr]\ar[ur]&&{\tiny\begin{array}{c}2\\1\end{array} }\ar[dr]&&{\tiny\begin{array}{c}1\\3\end{array} }\ar[dr]\ar[ur]&&{\tiny\begin{array}{c}1\\13\end{array} }\ar[dr]\ar[ur]&&{\tiny\begin{array}{c}21\\1\end{array} }\ar[dr]\ar[ur]&&\\
&{\tiny\begin{array}{c}1\end{array} }\ar[dr]\ar[ur]&&{\tiny\begin{array}{c}21\\13\end{array} }\ar[dr]\ar[ur]&&{\tiny\begin{array}{c}1\end{array} }\ar[dr]\ar[ur]&&{\tiny\begin{array}{c}21\\13\end{array} }\ar[dr]\ar[ur]&&{\tiny\begin{array}{c}1\end{array} }\ar[dr]\ar[ur]&&{\tiny\begin{array}{c}21\\13\end{array} }\ar[dr]\ar[ur]&&{\tiny\begin{array}{c}1\end{array} }\ar[dr]\ar[ur]&\\
&&{\tiny\begin{array}{c}2\\1\end{array} }\ar[ur]&&{\tiny\begin{array}{c}1\\3\end{array} }\ar[dr]\ar[ur]&&{\tiny\begin{array}{c}1\\13\end{array} }\ar[dr]\ar[ur]&&{\tiny\begin{array}{c}21\\1\end{array} }\ar[dr]\ar[ur]&&{\tiny\begin{array}{c}2\\1\end{array} }\ar[ur]&&{\tiny\begin{array}{c}1\\3\end{array} }\ar[dr]\ar[ur]&&{\tiny\begin{array}{c}1\\13\end{array} }\\
&*[F]{{\tiny\begin{array}{c}2\\12\\1\end{array} }}\ar[dr]\ar[ur]&&&&*[F]{{\tiny\begin{array}{c}1\\13\\3\end{array} }}\ar[dr]\ar[ur]&&*[F]{{\tiny\begin{array}{c}1\\1\end{array} } }\ar[dr]\ar[ur]&&*[F]{{\tiny\begin{array}{c}2\\12\\1\end{array} }}\ar[dr]\ar[ur]&&&&*[F]{{\tiny\begin{array}{c}1\\13\\3\end{array} }}\ar[dr]\ar[ur]&\\
&&*[F]{{\tiny\begin{array}{c}2\end{array} }}\ar[dr]&&*[F]{{\tiny\begin{array}{c}3\end{array} }}\ar[ur]&&*[F]{{\tiny\begin{array}{c}1\\1\\3\end{array} } }\ar[dr]\ar[ur]&&*[F]{{\tiny\begin{array}{c}2\\1\\1\end{array} }}\ar[ur]&&*[F]{{\tiny\begin{array}{c}2\end{array} }}\ar[dr]&&*[F]{{\tiny\begin{array}{c}3\end{array}} }\ar[ur]&&\\
&&&*[F]{{\tiny\begin{array}{c}3\\2\end{array} }}\ar[ur]&&&&*[F]{{\tiny\begin{array}{c}2\\1\\1\\3\end{array} }}\ar[ur]&&&&*[F]{{\tiny\begin{array}{c}3\\2\end{array}} }\ar[ur]&&&
}\]
\caption{The AR quiver of $\mod A$.}
\end{figure}
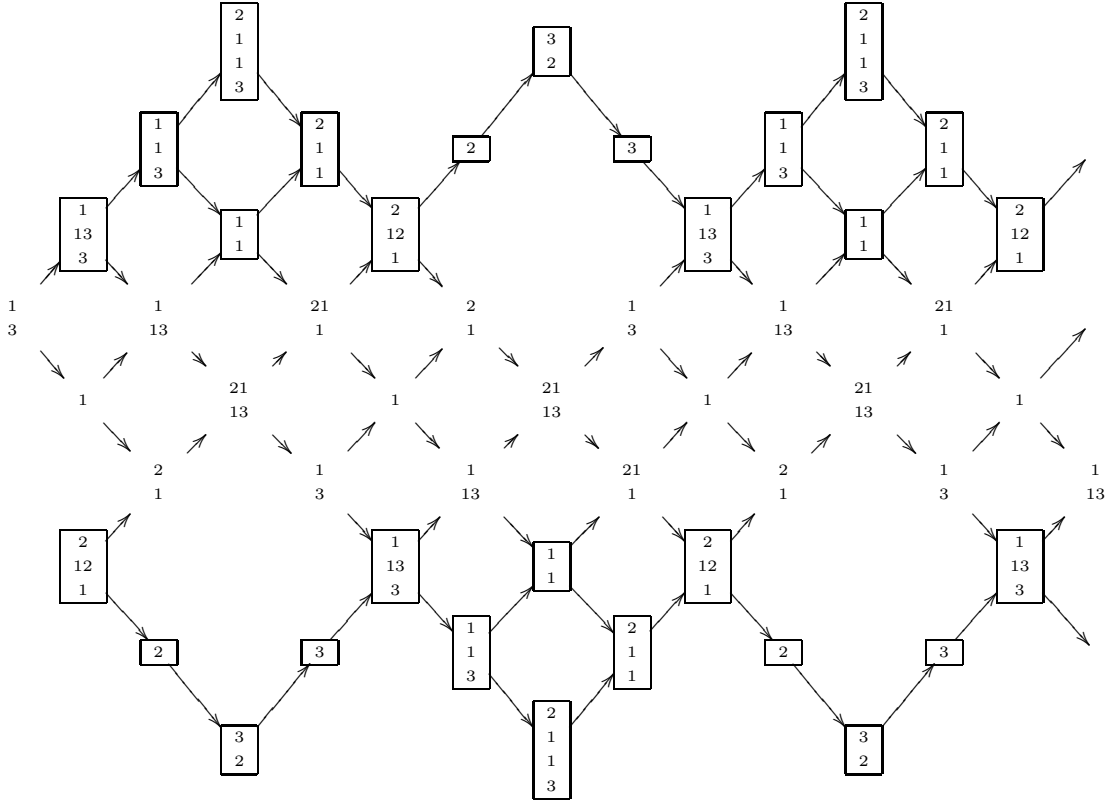

Set \[\tiny B=\begin{bmatrix}0 &1&-1\\ -2&0&1\\ 2&-1&0\end{bmatrix}\] and let $\mathcal{A}=\mathcal{A}(B)$ be the cluster algebra associated to $B$, which is a cluster algebra of type $\mathrm{C}_3$.

Denote by $M_{\underline{c}}$ the unique indecomposable $\tau_A$-rigid $A$-module with the rank vector $\underline{c}$~\footnote{In this example, we use row vector for simplicity.}. Let $I_1, I_2, I_3$ be the indecomposable injective $A$-modules associated to the vertices $1,2,3$, respectively. 
For example, it is easy to obtain the minimal injective resolution of $M_{(1,0,2)}$:
\[0\to M_{(1,0,2)}\to I_3^{\oplus 2}\to I_1.
\]
Hence we have
\begin{eqnarray*}
\mathbb{X}_{M_{(1,0,2)}}&=&x^{(1,0,-2)}\sum_{\underline{e}}\chi(\opname{Gr}_{\underline{e}}^{lf}(M_{(1,0,2)}))x^{Be}\\
&=&x^{(1,0,-2)}(1+2x^{(-1,1,0)}+x^{(-2,2,0)}+x^{(-2,0,2)})\\
&=&\frac{x_1^2+2x_1x_2+x_2^2+x_3^2}{x_1x_3^2}.
\end{eqnarray*}
Similarly, one can compute that 
\begin{eqnarray*}
\mathbb{X}_{M_{(1,0,0)}}&=&x^{(-1,2,0)}(1+x^{(0,-2,2)})=\frac{x_2^2+x_3^2}{x_1};\\
\mathbb{X}_{M_{(0,1,0)}}&=&x^{(0,-1,1)}(1+x^{(1,0,-1)})=\frac{x_1+x_3}{x_2};\\
\mathbb{X}_{M_{(0,0,1)}}&=&x^{(1,0,-1)}(1+x^{(-1,1,0)})=\frac{x_1+x_2}{x_3};\\
\mathbb{X}_{M_{(0,1,1)}}&=&x^{(0,-1,0)}(1+x^{(1,0,-1)}+x^{(0,1,-1)})=\frac{x_1+x_2+x_3}{x_2x_3};\\
\mathbb{X}_{M_{(1,0,1)}}&=&x^{(0,1,-1)}(1+x^{-1,1,0}+x^{(-1,-1,2)})
=\frac{x_1x_2+x_2^2+x_3^2}{x_1x_3};\\
\mathbb{X}_{M_{(1,1,1)}}&=&x^{(0,0,-1)}(1+x^{(-1,1,0)}+x^{(-1,-1,2)}+x^{(0,-1,1)})=\frac{x_1x_2+x_2^2+x_3^2+x_1x_3}{x_1x_2x_3};\\
\mathbb{X}_{M_{(1,1,0)}}&=&x^{(-1,1,0)}(1+x^{(0,-2,2)}+x^{(1,-2,1)})=\frac{x_2^2+x_3^2+x_1x_3}{x_1x_2};\\
\mathbb{X}_{M_{(1,2,0)}}&=&x^{(-1,0,0)}(1+x^{(0,-2,2)}+2x^{(1,-2,1)}+x^{(2,-2,0)})=\frac{x_1^2+x_2^2+x_3^2+2x_1x_3}{x_1x_2^2}.
\end{eqnarray*}
By Theorem~\ref{t:main-thm}, \[\mathbb{X}_{M_{(1,0,0)}},\mathbb{X}_{M_{(0,1,0)}},\mathbb{X}_{M_{(0,0,1)}},\mathbb{X}_{M_{(0,1,1)}},\mathbb{X}_{M_{(1,0,2)}},\mathbb{X}_{M_{(1,0,1)}},\mathbb{X}_{M_(1,1,1)},\mathbb{X}_{M_{(1,1,0)}},\mathbb{X}_{M_{(1,2,0)}}\] are precisely the non-initial cluster variables of $\mathcal{A}$. Moreover, for each rank vector $\underline{c}$, the denominator vector of the cluster variable $\mathbb{X}_{M_{\underline{c}}}$ is exactly $\underline{c}$.

\subsection*{Acknowledgments}
We are very grateful to the anonymous referee for significant comments and corrections they proposed.

\end{document}